\renewcommand*\libertine@figurestyle{LF}
\renewcommand*\libertine@figurestyle{OsF}
\newcommand{\tPC}{\mathbb{T}\mathbb{P}^{1}(\mathbb{C})}
\newcommand{\tr}{^{\text{trop}}}
\DeclareMathOperator{\p}{_{\text{P}}}
\DeclareMathOperator{\val}{val}
\theoremstyle{plain}
    \newtheorem{theorem}{Theorem}[section]
    \newtheorem{construction/theorem}[theorem]{Construction/Theorem}
    \newtheorem{lemma}[theorem]{Lemma}
    \newtheorem{proposition}[theorem]{Proposition}
\theoremstyle{definition}
    \newtheorem{remark}[theorem]{Remark}
    \newtheorem{example}[theorem]{Example}
    \newtheorem{definition}[theorem]{Definition}
    \newtheorem{construction}[theorem]{Construction}
\newcommand{\comment}[1]{}
\newcommand{\dw}{d_{\circ}}
\newcommand{\db}{d_{\bullet}}
\keywords{Hurwitz numbers, Tropical geometry,  Dyck paths}
\subjclass[2020]{14N10,05A19,14T15}
\DeclareMathOperator{\Aut}{Aut}
\title{Combinatorics of pruned Hurwitz numbers}
\author[S.~G.~Fitzgerald]{Sean Gearoid Fitzgerald}
\address{S.~G.~Fitzgerald: School of Mathematics 17, Westland Row, Trinity College Dublin, Dublin 2, Ireland}
\email{sfitzge4@tcd.ie}
\author[M.~A.~Hahn]{Marvin Anas Hahn}
\address{M.~A.~Hahn: School of Mathematics 17, Westland Row, Trinity College Dublin, Dublin 2, Ireland}
\email{hahnma@maths.tcd.ie}
\author[S.~Kelly]{Síofra Kelly}
\address{S.~Kelly: School of Mathematics 17, Westland Row, Trinity College Dublin, Dublin 2, Ireland}
\email{skelly26@tcd.ie}
\begin{document}
\begin{abstract}
    Hurwitz numbers enumerate branched morphisms between Riemannn surfaces with fixed numerical data. They represent important objects in enumerative geometry that are accessible by combinatorial techniques. In the past decade, many variants of Hurwitz numbers have appeared in the literature. In this paper, we focus on an exciting such variant that arises naturally from the theory of topological recursion: Pruned Hurwitz numbers. These are defined as an enumeration of a relevant subset of branched morphisms between Riemann surfaces, that yield smaller numbers than their classical counterparts while retaining maximal information. Thus, pruned Hurwitz numbers may be viewed as the \textit{core} of the Hurwitz problem. In this paper, we develop the combinatorial theory of pruned Hurwitz numbers. In particular, motivated by the successful application of combinatorial techniques to classical Hurwitz numbers, we derive two new combinatorial expressions of pruned Hurwitz numbers. Firstly, we show that they may be expressed in terms of Hurwitz mobiles which are tree-like structure that arise from the theory of random planar maps. Secondly, we prove a tropical correspondence theorem which allows the enumeration of pruned Hurwitz numbers in terms of tropical covers.
\end{abstract}
\maketitle

\section{Introduction}

\subsection{Single and double Hurwitz numbers}
Hurwitz numbers count branched coverings of Riemann surfaces with fixed ramification data and genera. In this work, we are particularly interested in two important families, that arise from specific kinds of ramification data: \textit{single Hurwitz numbers} and \textit{double Hurwitz numbers}.

Single Hurwitz numbers enumerate branched coverings of the projective line with arbitrary ramification of $0$ and simple ramification else.
\begin{definition}
\label{def:hurwitz}
    Let $d>0$, $g\ge0$, $\mu$ a partition of $d$ and let $b=2g-2+\ell(\mu)+d$. We fix $p_1,\dots,p_b\in\mathbb{P}^1$. We call a holomorphic map $f\colon S\to\mathbb{P}^1$ a Hurwitz cover of type $(g,\mu)$ if
    \begin{itemize}
        \item $S$ is a compact, connected Riemann surface of genus $g$,
        \item the ramification profile of $0$ is $\mu$,
        \item the ramification profile of $p_i$ is $(2,1,\dots,1)$,
        \item the pre-images of $0$ are labelled by $1,\dots,\ell(\mu)$, such that the pre-image labelled $i$ has ramification index $\mu_i$.
    \end{itemize}
    We call two Hurwitz covers $f\colon S\to\mathbb{P}^1$ and $f'\colon S'\to\mathbb{P}^1$ equivalent if there exists an isomorphism $g\colon S\to S'$, such that $f=f'\circ g$. We denote by $\mathfrak{H}_g(\mu)$ the set of all equivalence classes of Hurwitz covers of type $(g,\mu,\nu)$.\\
    Finally, we define the associated single Hurwitz number as
    \begin{equation}
        H_g(\mu)=\sum_{[f]\in\mathfrak{H}_g(\mu)}\frac{1}{|\mathrm{Aut}(f)|}.
    \end{equation}
\end{definition}

Dating back to Hurwitz' original work \cite{hurwitz1891riemann}, single Hurwitz numbers are well--studied objects with various remarkable properties. A striking example is the celebrated ELSV formula \cite{ekedahl1999hurwitz,ekedahl2000hurwitz} which relates single Hurwitz numbers to the intersection theory of the moduli space $\overline{\mathcal{M}}_{g,n}$ of genus $g$ curves with $n$ marked points. As a direct corollary one obtains that single Hurwitz numbers are polynomials up to a combinatorial factor.\\
More precisely, for fixed $n$, there exists a polynomial $P_g$ in $n$ variables, such that for all partitions $\mu=(\mu,\dots,\mu_n)$, we have
\begin{equation}
    \frac{1}{b!}H_g(\mu)=\prod_{i=1}^n\frac{\mu_i^{\mu_i}}{\mu_i!}P_g(\mu_1,\dots,\mu_n).
\end{equation}

Hurwitz already made the observation that Hurwitz numbers may be expressed as a combinatorial factorisation problem in the symmetric group. For single Hurwitz numbers this manifests as follows.

\begin{theorem}
Let $g$ be a non-negative integer, $d$ a positive integer and $\mu$ a partition of $d$. Furthermore, let $b=2g-2+\ell(\mu)+d$. Then, we have
 \begin{equation}
     H_g(\mu)= \frac{1}{d!} 
    \left|  \left\{ \begin{array}{l}
            (\sigma, \tau_1, \ldots, \tau_b) \textrm{ such that:} \\
            \bullet \ \sigma,\tau_i \in S_d, \ 1\leq i \leq b, \\
            \bullet \ \mathcal{C} (\sigma) = \mu, \\ 
            \bullet \ \text{the }\tau_i \text{ are transpositions}, \\
            \bullet \ \sigma \tau_1 \cdots \tau_b =\mathrm{id}, \\
            \bullet \ \textrm{the cycles of }\sigma\textrm{ are labelled by }1,\dots,\ell(\mu),\\\textrm{such that the cycle labeleld }i\textrm{ has length }\mu_i,\\
            \bullet \ \text{the subgroup generated by } (\sigma, \tau_1, \ldots, \tau_b)\\
            \text{ acts transitively on } [d].
          
    \end{array}\right\} \right|,
 \end{equation}   
   where $\mathcal{C}(\sigma)$ denotes the cycle type of $\sigma$, i.e. the partition of $d$ corresponding to its conjugacy class.
\end{theorem}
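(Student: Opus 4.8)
The plan is to establish a bijection between equivalence classes of Hurwitz covers counted by $\mathfrak{H}_g(\mu)$ and tuples $(\sigma, \tau_1, \ldots, \tau_b)$ satisfying the listed conditions, modulo the action of $S_d$ by simultaneous conjugation. This is the classical monodromy correspondence, so I would first fix a base point $q \in \mathbb{P}^1$ distinct from the branch locus $\{0, p_1, \ldots, p_b\}$ and choose generators $\alpha, \beta_1, \ldots, \beta_b$ of the fundamental group $\pi_1(\mathbb{P}^1 \setminus \{0, p_1, \ldots, p_b\}, q)$, where $\alpha$ loops around $0$ and $\beta_i$ loops around $p_i$, subject to the single relation $\alpha \beta_1 \cdots \beta_b = 1$. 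Since $\mathbb{P}^1$ is simply connected, this is the only relation.

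Next I would invoke the Riemann existence theorem, which gives an equivalence between degree-$d$ branched covers of $\mathbb{P}^1$ with branch locus contained in $\{0, p_1, \ldots, p_b\}$ and transitive monodromy representations $\rho \colon \pi_1(\mathbb{P}^1 \setminus \{0, p_1, \ldots, p_b\}, q) \to S_d$, taken up to conjugation. Labelling the $d$ sheets over $q$ by $[d]$, one sets $\sigma = \rho(\alpha)$ and $\tau_i = \rho(\beta_i)$. The relation in $\pi_1$ forces $\sigma \tau_1 \cdots \tau_b = \mathrm{id}$. Connectedness of $S$ translates precisely into transitivity of the subgroup generated by the images, and the genus is recovered from the Riemann--Hurwitz formula, which is exactly how $b = 2g - 2 + \ell(\mu) + d$ was chosen. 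The local monodromy around a branch point determines the ramification profile there: the cycle type of $\rho(\alpha) = \sigma$ equals the ramification profile $\mu$ of $0$, so $\mathcal{C}(\sigma) = \mu$, while the profile $(2, 1, \ldots, 1)$ at each $p_i$ forces each $\tau_i$ to be a transposition. The labelling of the preimages of $0$ by $1, \ldots, \ell(\mu)$ compatible with ramification indices corresponds to labelling the cycles of $\sigma$ by their lengths $\mu_i$.

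Finally I would account for the weighting. Two covers are equivalent exactly when their monodromy tuples are conjugate in $S_d$, so $\mathfrak{H}_g(\mu)$ is in bijection with conjugacy classes of admissible tuples. The orbit--stabiliser theorem identifies the stabiliser of a tuple under simultaneous conjugation with the automorphism group $\mathrm{Aut}(f)$ of the corresponding cover, whence the orbit of an admissible tuple has size $d!/|\mathrm{Aut}(f)|$. Summing $1/|\mathrm{Aut}(f)|$ over equivalence classes therefore equals $\tfrac{1}{d!}$ times the total number of admissible tuples, which is the claimed identity.

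The main obstacle, and the point requiring the most care, is the bookkeeping of the labelling data: one must check that the labelling of the cycles of $\sigma$ by $1, \ldots, \ell(\mu)$ according to length is preserved by the conjugation action in a way consistent with the labelling of the preimages of $0$, so that the passage to conjugacy classes is well defined and the stabiliser computation correctly yields $|\mathrm{Aut}(f)|$ rather than an overcount. Verifying that these labelled automorphisms match the geometric deck transformations fixing the cover is where the equivalence must be pinned down precisely.
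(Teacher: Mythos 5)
The paper states this result without proof: it is presented as a classical fact going back to Hurwitz, and the only argument of this flavour that the paper spells out is the sketch for \cref{thm-brangr}, which proceeds via pullback of the star graph rather than via monodromy representations. Your proposal is the standard and correct monodromy proof: the Riemann existence theorem identifies covers branched over $\{0,p_1,\dots,p_b\}$ with transitive representations of $\pi_1(\mathbb{P}^1\setminus\{0,p_1,\dots,p_b\})$ up to conjugation, the single relation $\alpha\beta_1\cdots\beta_b=1$ yields the product condition, local monodromies give the ramification profiles, and the orbit--stabiliser argument (with the stabiliser of a tuple under simultaneous conjugation being the centraliser of the monodromy group, i.e.\ $\Aut(f)$) produces the factor $\tfrac{1}{d!}$. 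You correctly flag the one genuinely delicate point, namely that the cycle labels of $\sigma$ must be carried through the conjugation action consistently with the labels on the preimages of $0$, so that the stabiliser consists only of label-preserving deck transformations; with that checked, the argument is complete. Since the paper supplies no proof here, there is nothing further to compare against, and your route is the expected one.
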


In \cite{zbMATH01539310}, Okounkov introduced a natural generalisation of single Hurwitz numbers, namely \textit{double Hurwitz numbers} and studied them in the context of integrable systems. 

\begin{definition}
\label{def:hurwitzdouble}
    Let $d>0$, $g\ge0$, $\mu,\nu$ partitions of $d$ and let $b=2g-2+\ell(\mu)+\ell(\nu)$. We fix $p_1,\dots,p_b\in\mathbb{P}^1$. We call a holomorphic map $f\colon S\to\mathbb{P}^1$ a Hurwitz cover of type $(g,\mu,\nu)$ if
    \begin{itemize}
        \item $S$ is a compact, connected Riemann surface of genus $g$,
        \item the ramification profile of $0$ is $\mu$, the ramification profile of $\infty$ is $\nu$,
        \item the ramification profile of $p_i$ is $(2,1,\dots,1)$,
        \item the pre-images of $0$ (resp. $\infty$) are labelled by $1,\dots,\ell(\mu)$ (resp. $1,\dots,\ell(\nu)$), such that the pre-image labelled $i$ has ramification index $\mu_i$ (resp. $\nu_j$).

    \end{itemize}
    We call two Hurwitz covers $f\colon S\to\mathbb{P}^1$ and $f'\colon S'\to\mathbb{P}^1$ equivalent if there exists an isomorphism $g\colon S\to S'$, such that $f=f'\circ g$. We denote by $\mathfrak{H}_g(\mu,\nu)$ the set of all equivalence classes of Hurwitz covers of type $(g,\mu,\nu)$.\\
    Finally, we define the associated single Hurwitz number as
    \begin{equation}                    
        H_g(\mu,\nu)=\sum_{[f]\in\mathfrak{H}_g(\mu,\nu)}\frac{1}{|\mathrm{Aut}(f)|}.
    \end{equation}
\end{definition}

We see immediately, that $H_g(\mu)=H_g(\mu,(1^d))$.

Analogously to single Hurwitz numbers, double Hurwitz numbers also admit an expression in terms of factorisations in the symmetric group.

\begin{theorem}
Let $g$ be a non-negative integer, $d$ a positive integer and $\mu,\nu$ partitions of $d$. Furthermore, let $b=2g-2+\ell(\mu)+\ell(\nu)$. Then, we have
 \begin{equation}
     H_g(\mu)= \frac{1}{d!} 
    \left|  \left\{ \begin{array}{l}
            (\sigma_1, \tau_1, \ldots, \tau_b,\sigma_2) \text{ such that:} \\
            \bullet \ \sigma_1, \sigma_2, \tau_i \in S_d, \ 1\leq i \leq b, \\
            \bullet \ \mathcal{C} (\sigma_1) = \mu, \mathcal{C}(\sigma_2) = \nu, \\ 
            \bullet \ \text{the }\tau_i \text{ are transpositions}, \\
            \bullet \ \textrm{the cycles of }\sigma_1\textrm{ are labelled by }1,\dots,\ell(\mu),\\\textrm{such that the cycle labeleld }i\textrm{ has length }\mu_i,\\
            \bullet \ \textrm{the cycles of }\sigma_2\textrm{ are labelled by }1,\dots,\ell(\nu),\\\textrm{such that the cycle labeleld }i\textrm{ has length }\nu_i,\\
            \bullet \ \sigma_1 \tau_1 \cdots \tau_b\sigma_2 =\mathrm{id}, \\
            \bullet \ \text{the subgroup generated by } (\sigma_1, \tau_1, \ldots, \tau_b, \sigma_2)\\
            \ \ \text{ acts transitively on } [d].
          
    \end{array}\right\} \right|.
 \end{equation}   
\end{theorem}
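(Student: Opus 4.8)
The plan is to prove the statement via the classical monodromy description of branched covers, i.e. the Riemann existence theorem, exactly as in the single Hurwitz case but now keeping track of the additional special point $\infty$. First I would fix the $b+2$ branch points $0,\infty,p_1,\dots,p_b$ together with a base point $q\in\mathbb{P}^1$, and choose loops $\gamma_0,\gamma_1,\dots,\gamma_b,\gamma_\infty$ based at $q$, each encircling one branch point, generating the fundamental group $\pi_1(\mathbb{P}^1\setminus\{0,\infty,p_1,\dots,p_b\},q)$ subject to the single relation $\gamma_0\gamma_1\cdots\gamma_b\gamma_\infty=1$.

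By the Riemann existence theorem, isomorphism classes of degree $d$ branched covers $f\colon S\to\mathbb{P}^1$ unramified away from $\{0,\infty,p_1,\dots,p_b\}$ are in bijection with monodromy representations $\rho\colon\pi_1(\mathbb{P}^1\setminus\{0,\infty,p_1,\dots,p_b\},q)\to S_d$ considered up to simultaneous conjugation, where $S_d$ permutes the $d$ sheets in the fibre over $q$. Under this correspondence the cycle type of the image of the loop around a branch point equals the ramification profile of the cover over that point, and $S$ is connected precisely when the image of $\rho$ acts transitively on $[d]$. I would set $\sigma_1:=\rho(\gamma_0)$, $\sigma_2:=\rho(\gamma_\infty)$ and $\tau_i:=\rho(\gamma_i)$; the defining conditions then translate into $\mathcal{C}(\sigma_1)=\mu$, $\mathcal{C}(\sigma_2)=\nu$, each $\tau_i$ a transposition (the cycle type of simple ramification $(2,1,\dots,1)$), and the product relation $\sigma_1\tau_1\cdots\tau_b\sigma_2=\mathrm{id}$. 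The count $b=2g-2+\ell(\mu)+\ell(\nu)$ is forced by the Riemann--Hurwitz formula. A labelling of the preimages of $0$ (respectively $\infty$) by $1,\dots,\ell(\mu)$ (respectively $1,\dots,\ell(\nu)$) so that the $i$-th point has ramification index $\mu_i$ (respectively $\nu_i$) corresponds exactly to a labelling of the cycles of $\sigma_1$ (respectively $\sigma_2$) by their prescribed lengths, since the cycles of $\sigma_1$ are in canonical bijection with the preimages of $0$. Thus labelled Hurwitz covers correspond to labelled factorisation tuples.

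Finally I would account for the automorphism weighting. The group $S_d$ acts on the set of admissible labelled tuples $(\sigma_1,\tau_1,\dots,\tau_b,\sigma_2)$ by simultaneous conjugation; by the Riemann existence theorem its orbits are precisely the equivalence classes of labelled Hurwitz covers, and the stabiliser of a tuple is canonically identified with the automorphism group $\mathrm{Aut}(f)$ of the corresponding cover. By the orbit--stabiliser theorem, the orbit of a cover $[f]$ has cardinality $d!/|\mathrm{Aut}(f)|$, so counting $1$ over all admissible tuples and dividing by $d!$ yields $\sum_{[f]}1/|\mathrm{Aut}(f)|=H_g(\mu,\nu)$, which is the claimed identity.

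I expect the main obstacle to be the careful verification that the conjugation stabiliser really coincides with $\mathrm{Aut}(f)$ in the presence of the preimage labellings: one must check that an automorphism of the cover is forced to respect the labels, and conversely that the conjugation action on labelled tuples has exactly the isomorphism classes of labelled covers as its orbits. This is the precise point at which the $1/d!$ versus $1/|\mathrm{Aut}(f)|$ normalisation is pinned down, and where the argument must be made rigorous rather than merely parallel to the single Hurwitz case.
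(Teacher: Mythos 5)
The paper states this theorem without proof, presenting it as the classical factorisation description going back to Hurwitz; your proposal supplies exactly the standard argument the paper implicitly relies on, namely the Riemann existence theorem translating branched covers into monodromy representations $\rho\colon\pi_1(\mathbb{P}^1\setminus\{0,\infty,p_1,\dots,p_b\})\to S_d$ up to conjugation, with connectedness corresponding to transitivity and the orbit--stabiliser theorem converting the $1/d!$ normalisation into the $1/|\mathrm{Aut}(f)|$ weighting. The argument is correct, and you rightly flag the one genuinely delicate point: with the cycle labels present, the stabiliser of a labelled tuple under simultaneous conjugation is the subgroup of the centraliser of $\langle\sigma_1,\tau_1,\dots,\tau_b,\sigma_2\rangle$ fixing each labelled cycle setwise, which matches the automorphism group of the labelled cover (automorphisms permuting preimages of $0$ or $\infty$ of equal ramification index are excluded on both sides), so the identification goes through. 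This is consistent with the paper's sketch of the analogous Theorem~2.3, which likewise invokes the Riemann existence theorem and the Galois correspondence for topological covers.
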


Quite remarkably, double Hurwitz numbers share many features with single Hurwitz numbers. In \cite{goulden2005towards}, the geometry of double Hurwitz numbers was explored towards a connection to the intersection theory of moduli spaces resembling the ELSV formula. As a key feature towards such a connection, a polynomial behaviour of double Hurwitz numbers was identified. More precisely, the authors considered the following set-up.\\
Let $m,n>0$ integers and define the hyperplane
\begin{equation}
    \mathcal{H}_{m,n}=\left\{(\mu,\nu)\in\mathbb{N}^m\times\mathbb{N}^n\mid\sum\mu_i=\sum\nu_j\right\}.
\end{equation}
Further,  define a hyperplane arrangement, the so-called \textit{resonance arrangement}, in $\mathcal{H}_{m,n}$
\begin{equation}
    \mathcal{R}_{m,n}=\left\{\sum_{i\in I}\mu_i-\sum_{j\in J}\nu_j=0\mid I\subset[m],J\subset[n]\right\}.
\end{equation}
We call the hyperplanes in $\mathcal{R}_{m,n}$ \textit{walls} and the connected components of $\mathcal{H}_{m,n}\backslash|\mathcal{R}_{m,n}|$ \textit{chambers} -- where $|\mathcal{R}_{m,n}|$ is the support of $\,\mathcal{R}_{m,n}$, i.e. the union of all hyperplanes.

Now, one may consider the map
\begin{align}
    H_g\colon\mathcal{H}_{m,n}&\to\mathbb{Q}\\
    (\mu,\nu)&\mapsto H_g(\mu,\nu).
\end{align}

\begin{theorem}[{\cite[Theorem 2.1]{goulden2005towards}}]
    The map $H_g$ is piecewise polynomial. More precisely, for each chamber $C$ of $\mathcal{R}_{m,n}$, there exists a polynomial $P_g^C$ in $m+n$ variables of degree $4g-3+m+n$, such that for all $(\mu,\nu)\in C$ we have $H_g(\mu,\nu)=P_g^C(\mu,\nu)$.
\end{theorem}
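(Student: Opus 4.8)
The plan is to pass from the symmetric-group factorisation count to a sum over finitely many weighted graphs, and then to recognise the contribution of each graph as a lattice-point count to which Ehrhart theory applies. \emph{First}, I would read a factorisation $\sigma_1\tau_1\cdots\tau_b\sigma_2=\mathrm{id}$ dynamically: starting from a permutation of cycle type $\mu$ and multiplying by the transpositions $\tau_1,\dots,\tau_b$ in turn produces a walk through cycle types in which each step either joins two cycles or cuts one (the classical \emph{join--cut} mechanism). Recording the merging and splitting pattern of this walk, and contracting the inessential steps, associates to each factorisation a connected trivalent graph $\Gamma$ of genus $g$ with $m$ incoming legs (weighted $\mu_1,\dots,\mu_m$) and $n$ outgoing legs (weighted $\nu_1,\dots,\nu_n$); equivalently this is the graph expansion produced by Okounkov's operator/infinite-wedge formula for $H_g(\mu,\nu)$. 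An Euler-characteristic count shows that such a graph has exactly $b=2g-2+m+n$ trivalent vertices and $3g-3+m+n$ bounded edges. This reduces the statement to analysing, for each of the finitely many graphs $\Gamma$, a weighted count of admissible edge-labellings as a function of $(\mu,\nu)$.

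\emph{Next} I would analyse these labellings as integer flows. Orienting $\Gamma$ from the $\mu$-legs to the $\nu$-legs and imposing the balancing (Kirchhoff) condition at every trivalent vertex, the momentum on each edge is forced to be an integer linear combination of the $\mu_i$ and $\nu_j$, determined by the boundary data up to $g$ free parameters indexed by a basis of the cycle space of $\Gamma$. Cutting $\Gamma$ along any edge exhibits its momentum as a partial sum $\sum_{i\in I}\mu_i-\sum_{j\in J}\nu_j$ shifted by the cycle parameters, so the requirement that all momenta be positive (i.e.\ that the chosen orientation is admissible) cuts out a polytope $P_\Gamma(\mu,\nu)$ in the $g$-dimensional space of cycle parameters whose facet-defining inequalities are exactly resonance conditions. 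Consequently the combinatorial type of $P_\Gamma$, and the set of graphs that contribute at all, are constant on each chamber $C$ of $\mathcal{R}_{m,n}$ and can change only when some partial sum $\sum_{i\in I}\mu_i-\sum_{j\in J}\nu_j$ vanishes; this is precisely why the walls lie on the resonance arrangement.

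\emph{Finally} I would read off the polynomial and its degree. For fixed $\Gamma$ the graph contributes $\frac{1}{|\mathrm{Aut}\,\Gamma|}\sum\prod_{e}\omega_e$, where the inner sum runs over the integer points of $P_\Gamma(\mu,\nu)$ and each bounded-edge momentum $\omega_e$ is linear in $(\mu,\nu)$ and the cycle parameters. Thus the summand is a polynomial of degree $3g-3+m+n$, and summing it over the lattice points of the $g$-dimensional polytope $P_\Gamma$ is a discrete integration that raises the degree by $g$; hence each graph contributes, on a fixed chamber, a polynomial in $(\mu,\nu)$ of degree $(3g-3+m+n)+g=4g-3+m+n$. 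Summing the finitely many graph contributions gives the chamber polynomial $P_g^C$ of the asserted degree.

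The hard part will be the two points where the combinatorics meets the analysis. First, one must establish the graph expansion itself rigorously — either by carefully organising the join--cut walk or by extracting it from the operator formula — and verify that the weight of each graph is genuinely the flow product claimed. Second, and more delicately, summing a polynomial over the integer points of $P_\Gamma(\mu,\nu)$ a priori yields only a \emph{quasi}-polynomial, since the vertices of $P_\Gamma$ move with $(\mu,\nu)$ and need not be integral; one must show that the periodic corrections cancel so that the chamber contribution is an honest polynomial of the stated degree. Controlling this cancellation uniformly over a chamber, together with pinning the walls down to exactly $\mathcal{R}_{m,n}$, is the technical core of the argument.
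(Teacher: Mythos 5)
This theorem is not proved in the paper at all: it is quoted verbatim from Goulden--Jackson--Vakil \cite{goulden2005towards} as background, so there is no in-paper proof to compare against. Your proposal is, in substance, a faithful reconstruction of the known argument (the GJV proof, later recast tropically in \cite{cavalieri2010tropical,cavalieri2011wall}): reduce the factorisation count to a finite sum over genus-$g$ trivalent graphs with $b=2g-2+m+n$ vertices, interpret the edge weights as an integer flow with $g$ free cycle parameters, and recognise each graph's contribution as a sum of a degree-$(3g-3+m+n)$ polynomial over the lattice points of a $g$-dimensional polytope whose facets are resonance conditions, yielding degree $4g-3+m+n$ on each chamber. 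Your degree bookkeeping and your identification of the walls are correct, and you rightly single out the two genuine technical points: justifying the graph expansion with the claimed flow-product weights, and showing that the Ehrhart-type sum is an honest polynomial rather than a quasi-polynomial on each chamber (in the tropical formulation this is handled by decomposing the flow polytope into chains of bounded \enquote{hyperplane arrangements} whose lattice-point counts are genuinely polynomial on each chamber). One small imprecision: in the join--cut walk every transposition either cuts a cycle or joins two, so every step already produces a trivalent vertex and there are no \enquote{inessential steps} to contract. With those two technical points carried out, your outline is a correct proof of the statement.
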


The polynomial structure of double Hurwitz numbers was further studied in \cite{shadrin2008chamber} in genus $0$ employing intersection theoretic methods. The higher genus case was studied in \cite{cavalieri2011wall} via tropical geometry and in \cite{johnson2015double} using a representation theoretic approach.\\
All these works explored the difference of polynomiality in different chambers. More precisely, they derived \textit{wall-crossing formulae}.

\begin{definition}
Let $C_1$ and $C_2$ be two adjacent chambers in $\mathcal{R}_{m,n}$ seperated by a wall $\delta=\sum_{i\in I}\mu_i-\sum_{j\in J}\nu_j$. We assume that $\delta>0$ in $C_1$ and $\delta<0$ in $C_2$. Furthermore, let $g$ be a non-negative integer. Then, we define the associated wall-crossing as
    \begin{equation}
        WC_\delta^g=P_g^{C_1}-P_g^{C_2}.
    \end{equation}
\end{definition}

It turns out that these wall--crossings may again be expressed in terms of Hurwitz numbers with smaller input data. For the purpose of this manuscript, we only state the genus $0$ result.

\begin{theorem}[{\cite{shadrin2008chamber}}]
    Let $m,n$ be positive integers and $C$ a chamber of $\mathcal{R}_{m,n}$ adjacent to a fixed wall $\delta=\sum_{i\in I}\mu_i-\sum_{j\in J}\nu_j=0$ with $\delta>0$ in $C$. Let $\tilde{C}$ be another chamber neighbouring $C$ and sharing the wall $\delta$ in codimension $1$. Then, we have
    \begin{equation}
        WC_\delta^0(\mu,\nu)=\binom{m+n-2}{|I|+|J|-1}\cdot\delta\cdot H_0(\mu_I,(\nu_J,\delta))\cdot H_0((\mu_{I^c},\delta),(\nu_{J^c})).
    \end{equation}
    for all $\mu,\nu\in C$.
\end{theorem}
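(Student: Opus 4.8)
The plan is to prove the wall-crossing formula combinatorially by comparing the factorisation counts on the two sides of the wall $\delta$, working entirely in genus $0$ via the symmetric group description of double Hurwitz numbers. First I would use the cut-join recursion on the sequence of transpositions: by Theorem~2.5, $H_0(\mu,\nu)$ counts (up to $\frac{1}{d!}$) transitive factorisations $\sigma_1\tau_1\cdots\tau_b\sigma_2=\mathrm{id}$ with prescribed cycle types and labels. The difference $WC^0_\delta=P^{C_1}_0-P^{C_2}_0$ measures how this count changes as we cross the wall where the partial sum $\delta=\sum_{i\in I}\mu_i-\sum_{j\in J}\nu_j$ changes sign. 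The key analytic device I would invoke is the tropical or monodromy interpretation: crossing the wall corresponds to degenerating the cover so that a subset $I$ of the preimages of $0$ and a subset $J$ of the preimages of $\infty$ separate from the rest. On the level of factorisations, one inserts an intermediate cut at the wall where an auxiliary cycle of length exactly $\delta$ appears, splitting the monodromy into two independent transitive pieces.

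Next I would make this splitting precise. The idea is that at the wall, the factorisation decomposes into a product of two factorisations: one on the \enquote{positive} side governing the branch points that flow toward $0$, contributing a transitive factorisation with ramification $(\mu_I,(\nu_J,\delta))$ in genus $0$, and one on the \enquote{negative} side with ramification $((\mu_{I^c},\delta),\nu_{J^c})$. The new part $\delta$ is the length of the cycle created along the separating edge, which explains the factor $\delta$ in the formula (it records the $\delta$ choices of where to attach, equivalently the ramification index of the connecting node). The binomial coefficient $\binom{m+n-2}{|I|+|J|-1}$ then counts the ways of distributing the remaining $b$ simple branch points among the two sides; since each side is a genus-$0$ cover the respective numbers of transpositions are determined by the Riemann--Hurwitz count $b_k=2\cdot 0-2+\ell+d$ for each piece, and the multinomial of interleaving the two ordered transposition sequences collapses to this binomial after accounting for the labelled parts. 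I would verify the exponent bookkeeping by checking that the degrees match: the degree of $WC^0_\delta$ as a polynomial must equal $(4\cdot 0-3+m+n)-1$, and the right-hand side has degree $1+(\deg H_0\text{-piece}_1)+(\deg H_0\text{-piece}_2)$, which should agree by Theorem~2.9 applied to each factor.

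The cleanest route is probably to set this up through the tropical covers of Section (the correspondence theorem promised in the abstract) or through Cavalieri--Johnson--Markwig style graph sums, where the wall-crossing is realised as a difference of weighted graph counts that cancel except for graphs with a single edge of weight $\delta$ crossing the wall. In that framework the bijection is transparent: graphs contributing to $P^{C_1}_0-P^{C_2}_0$ are exactly those whose unique $\delta$-edge, when cut, separates the graph into two trees corresponding to the two smaller Hurwitz problems, and the edge weight $\delta$ together with the choice of which of the remaining vertices lie on each side yields precisely $\delta\cdot\binom{m+n-2}{|I|+|J|-1}$.

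The hard part will be the combinatorial bookkeeping of the automorphisms and the labelling conventions: one must carefully track the $\frac{1}{d!}$ normalisations on each side, ensure that the newly created cycle of length $\delta$ is treated as an \emph{unlabelled} internal ramification (so that it is not over- or under-counted relative to the labelled parts $\mu_I,\nu_J$), and confirm that the transitivity on each piece is genuinely independent after the cut. In particular I expect the subtlety to lie in showing that no cross-terms survive — that every factorisation counted by $P^{C_1}_0$ but not $P^{C_2}_0$ factors \emph{uniquely} through a single separating $\delta$-cycle, with no contribution from configurations where two or more cycles cross the wall (these cancel in the difference because they appear with equal multiplicity in both chambers). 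Establishing this uniqueness and cancellation, rather than the algebraic form of the factor, is where the real work resides.
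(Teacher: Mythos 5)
First, a point of order: the paper does not prove this statement. It is quoted verbatim from \cite{shadrin2008chamber} as background in the introduction, so there is no internal proof to compare against; the closest the paper comes is its description of the tropical strategy of \cite{cavalieri2011wall}, which is indeed the route your third paragraph gestures at. Judged on its own merits, your proposal correctly identifies the known tropical mechanism and the provenance of each factor (the edge weight $\delta$, the binomial $\binom{m+n-2}{|I|+|J|-1}$ from interleaving the $b_1=|I|+|J|-1$ and $b_2=m+n-|I|-|J|-1$ simple branch points, and the two smaller Hurwitz factors from the two components after cutting), but it is an outline whose central step is explicitly deferred, and that step is where the proof actually lives.

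Concretely, three gaps. (1) Your first two paragraphs compare $P^{C_1}_0$ and $P^{C_2}_0$ as if they were counts of a common set of factorisations in which one can isolate ``configurations that cross the wall''; they are polynomials agreeing with counts on disjoint open chambers, so no such common set exists. The comparison only becomes meaningful after passing to the graph sums, where each monodromy graph $\Gamma$ contributes a fixed polynomial $\prod_e w_e(\mu,\nu)$ on every chamber in which all its edge weights are positive, and $WC_\delta^0$ is the signed sum over graphs valid in exactly one of the two chambers. (2) The claim that only configurations with a single separating $\delta$-edge survive needs an argument: you must show that on the wall the only edge-weight linear forms $\sum_{i\in I'}\mu_i-\sum_{j\in J'}\nu_j$ that vanish are $\pm\delta$ (i.e. $(I',J')\in\{(I,J),(I^c,J^c)\}$, using $\sum\mu_i=\sum\nu_j$), and that a genus-$0$ monodromy graph, being a tree with labelled leaves, has at most one edge inducing the leaf bipartition $(I\cup J,\ I^c\cup J^c)$. (3) The passage from the cut-and-reglue bijection to the coefficient $\delta$ (rather than $2\delta$ or $0$) is not ``cancellation with equal multiplicity'': graphs valid only in $C_1$ carry the edge weight $+\delta$ and graphs valid only in $C_2$ carry $-\delta$, and one must show that the two families together are in $\binom{b}{b_1}$-to-one correspondence with ordered pairs of monodromy graphs for $H_0(\mu_I,(\nu_J,\delta))$ and $H_0((\mu_{I^c},\delta),\nu_{J^c})$, so that
\begin{equation}
P^{C_1}_0-P^{C_2}_0=\delta\cdot\Bigl(\textstyle\sum_{C_1\text{-only}}+\sum_{C_2\text{-only}}\Bigr)=\delta\cdot\binom{m+n-2}{|I|+|J|-1}\cdot H_0(\mu_I,(\nu_J,\delta))\cdot H_0((\mu_{I^c},\delta),\nu_{J^c}).
\end{equation}
You flag this bookkeeping yourself as ``where the real work resides''; since it is precisely the content of the theorem, the proposal as written is an accurate roadmap to the Cavalieri--Johnson--Markwig proof rather than a proof.
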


We want to highlight the work in \cite{cavalieri2011wall} employing a tropical approach towards Hurwitz numbers. It is based on the derivation of a tropical interpretation of double Hurwitz numbers obtained in \cite{cavalieri2010tropical}, i.e. an expression of double Hurwitz numbers in terms of maps between combinatorial graphs, so-called \textit{tropical covers}. The wall--crossing formulae in arbitrary genus, then follow from an intricate analysis of spaces of these graphs in different chambers of polynomiality. In particular, the Hurwitz numbers with smaller input data in the wall--crossing formulae arise from cutting the involved graphs into smaller graphs, each contributing to a smaller Hurwitz problem.\vspace{\baselineskip}

A non-tropical, combinatorial study of double Hurwitz numbers in genus $0$ was undertaken in \cite{duchi2014bijections}. The basis of this work, is a well--known graph-theoretic interpretation of double Hurwitz numbers. We note that while also an expression in terms of graphs, this interpretation differs significantly from the previously mentioned tropical one. Namely, while the tropical correspondence involves a \textit{weighted bijection}, the one employed in \textit{loc.cit.} is actually a $1$-to-$1$ bijection. The involved graphs are called \textit{Hurwitz galaxies}. Starting from Hurwitz galaxies, the authors of \cite{duchi2014bijections}, proceed to employ an ingenious idea from graph theory, similar to Schaeffer's bijection, to derive a correspondence between Hurwitz galaxies and tree-like structure called \textit{Hurwitz mobiles}. Due to their close proximity to trees, Hurwitz mobiles allow for simplified counting arguments, which in \textit{loc.cit. } enabled a new proof of Hurwitz' original closed formula for genus $0$ single Hurwitz numbers and the resolution of a conjecture of Kazarian and Zvonkine.

\subsection{Pruned Hurwitz numbers}
In the past decade, many new variants of Hurwitz numbers were introduced. In this work, we focus on so-called \textit{pruned single Hurwitz numbers}, which we will denote $PH_g(\mu)$, that were first studied in \cite{do2018pruned} in the context of topological recursion.\footnote{We note that the main ideas were already present in \cite{zvonkine2004algebra,irving2009minimal}.} Pruned Hurwitz numbers were originally defined as an enumeration of a subset of the branched covers enumerated by single Hurwitz numbers (see \cref{def:pruned} for a precise definition). We denote, for a fixed partition $\mu$ and fixed genus $g$, pruned single Hurwitz numbers by $PH_g(\mu)$ (for a precise definition, see \cref{def:pruned}). This enumeration has remarkable combinatorial properties and pruned single Hurwitz numbers are in many ways better behaved than their classical counterparts. For example, for fixed $n$, there exists a polynomial $Q_g$ in $n$ variables, such that

\begin{equation}
    \frac{1}{b!}PH_g(\mu)=Q_q(\mu_1,\dots,\mu_n),
\end{equation}

i.e. pruned single Hurwitz numbers are actually polynomial without an involved combinatorial pre-factor \cite[Theorem 1]{do2018pruned}.  Moreover, it was proved in \cite{do2018pruned} that pruned single Hurwitz determine their classical counterparts and vice versa. Therefore, they provide a smaller, in some sense better behaved, enumerative invariant that may be viewed as the \textit{core} of the Hurwitz number problem.\\
In \cite{zbMATH06791415}, pruned double Hurwitz numbers $PH_g(\mu,\nu)$ were introduced. Analogously to the single case, pruned double Hurwitz numbers enumerate a subset of $\mathfrak{H}_g(\mu,\nu)$ and again this smaller enumerative invariant captures the key features of double Hurwitz numbers. In particular, pruned double Hurwitz numbers are piecewise polynomial with the same chamber structure as their classical counterparts. Wall--crossing formulae for pruned double Hurwitz numbers however remain an open problem. Moreover, mirroring the idea for single pruned Hurwitz numbers, it was proved in \cite{zbMATH06791415} that pruned double Hurwitz numbers determine double Hurwitz numbers and vice versa.

\subsection{Main results}
The aim of this paper is to explore the combinatorics of pruned Hurwitz numbers and lay the foundations for further combinatorial analysis.\\
The first part is dedicated to applying the techniques from \cite{duchi2014bijections} to this setting. In \cite{hahn2020bi}, an interpretation of pruned single and double Hurwitz numbers in terms of Hurwitz galaxies was given. Due to their aptitude for enumerative applications, it is natural to ask for an expression of pruned Hurwitz numbers in terms of Hurwitz mobiles. In order to achieve this, we review the bijection between Hurwitz galaxies and Hurwitz mobiles derived in \cite{duchi2014bijections} in \cref{sec:bij}. This bijection proceeds through several steps, by enhancing and altering Hurwitz galaxies. While the combinatorial data is preserved through this process, it becomes reencoded. For our purposes, we rephrase the bijection in the rich language of \textit{Dyck paths}. The properties and applications of Dyck paths and their various forms (also known as contour functions, Dyck codes, standard contour codes etc.) have been the subject of much combinatorial research. \cite{dyckmodk,dycknondecreasing,dycknondecreasingenum,dyckcoloured} Our rephrasing of the bijection in \cite{duchi2014bijections} in this language, allows us to perform an intricate combinatorial analysis to derive a new correspondence theorem expressing pruned single Hurwitz numbers in terms of what we call \textit{pruned Hurwitz mobiles} in \cref{sec:bij}. This achieves an enumeration of genus $0$ pruned single Hurwitz numbers in terms of tree-like graphs.\\
In the second part, we study the tropical combinatorics of pruned double Hurwitz numbers. While in the past, tropical derivations of enumerative invariants were obtained either using representation theoretic methods \cite{cavalieri2018graphical,cavalieri2010tropical} or via degeneration techniques applied to the underlying algebro-geometric objects \cite{cavalieri2016tropicalizing}, we employ a purely combinatorial approach. The starting point is a cut--and--join recursion of pruned double Hurwitz numbers derived in \cite{zbMATH06791415}. By tracing this recursion back to its infinite set of base cases, we build tropical covers without the use of any underlying geometry or representation theory in \cref{sec:troppru}. The correspondence theorem, expressing pruned double Hurwitz numbers in terms of tropical covers is then obtained by analysing the multiplicity of each cover in the recursion. This allows us to study the polynomiality of pruned double Hurwitz numbers via tropical techniques in \cref{sec:polytrop}. 

\subsection{Structure}
In \cref{sec:graphs}, we review the basics on branching graphs, Hurwitz galaxies, Hurwitz mobiles and Dyck paths. These are the main objects in the bijection correspondences between $\mathfrak{H}_g(\mu,\nu)$ and graph theoretic structures. Based on this, we introduce pruned single and double Hurwitz numbers in \cref{sec:prunhur} and discuss some of their key properties in more detail. In \cref{sec:bij}, we sketch the bijection between covers contributing to genus zero double Hurwitz numbers and Hurwitz mobiles obtained in \cite{duchi2014bijections}. We rephrase this bijection in terms of Dyck paths to prove our correspondence theorem enumerate genus zero pruned single Hurwitz numbers in terms of pruned Hurwitz mobiles. After that, we move on to the tropical combinatorics of pruned double Hurwitz numbers and first introduce some basics on tropical covers in \cref{sec:trop}. We prove the correspondence theorem, enumerating pruned double Hurwitz numbers via weighted tropical covers in \cref{sec:troppru}. Finally, we explore the polynomiality of pruned double Hurwitz numbers via their new tropical interpretation in \cref{sec:polytrop}.

\subsection*{Acknowledgements} The first and third author acknowledge partial support by the Hamilton Trust fund during the work on this paper.

\section{Bijections for Hurwitz numbers: Branching graphs, Galaxies and mobiles}
\label{sec:graphs}
In this section, we review a classical construction expressing double Hurwitz numbers in terms of graphs on surfaces and explore some of their combinatorics. There are several equivalent ways to produce such a correspondence. 

\subsection{Branching graphs and Galaxies}
First, we focus on two of them: \textit{Branching graphs} and \textit{Hurwitz galaxies}. These graphs are obtained by pulling back graphs on $\mathbb{P}^1$ along Hurwitz covers.
We begin with the following definition.

\begin{definition} \label{def:goodgraph}
A good graph on a surface $S$ is a graph $\Gamma$ embedded on $S$ such that:
\begin{enumerate}
    \item $S \backslash \Gamma$ is homeomorphic to a disjoint union of open disks,
    \item Wherever two edges cross there is a vertex,
    \item Edges that end without a vertex are called half-edges.
\end{enumerate}
\end{definition}

\begin{definition}
Let $d$ be a positive integer, $g$ a non-negative integer and $\mu, \nu$ be ordered partitions of $d$. Again, let $b=2g-2+\ell(\mu)+\ell(\nu)$. We define a branching graph of type $(g, \mu, \nu)$ to be a good graph $\Gamma$ on a compact oriented surface $S$ of genus $g$ that satisfies the following:
\begin{enumerate}
    \item There are $\ell(\nu)$ vertices, labelled $1, \dots, \ell(\nu)$. Each vertex is adjacent to $\nu_i \cdot b$ half-edges, labelled cyclically counter-clockwise by $1, \dots, b$. 
    \item There are exactly $b$ full edges labelled by $1, \dots, b$.
    \item The $\ell(\mu)$ faces are labelled by $1, \dots, \ell(\mu)$ and the face labelled $i$ has perimeter which we denote by $per(i)=\mu_i$.
\end{enumerate}
We define an isomorphism of branching graphs as an orientation-preserving homeomorphism of surfaces which induces an isomorphism of graphs preserving vertex, edge and face labels. Further, we denote by $B_g(\mu,\nu)$ the set of all isomorphism classes of branching graphs of type $(g,\mu,\nu)$.
\end{definition}

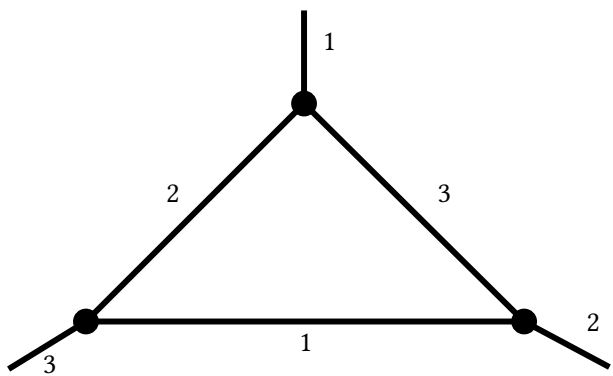
\begin{figure}
\tikzset{every picture/.style={line width=0.75pt}} 

\begin{tikzpicture}[x=0.75pt,y=0.75pt,yscale=-1,xscale=1]

\draw  [fill={rgb, 255:red, 0; green, 0; blue, 0 }  ,fill opacity=1 ][line width=2.25]  (207,194) .. controls (207,191.24) and (209.24,189) .. (212,189) .. controls (214.76,189) and (217,191.24) .. (217,194) .. controls (217,196.76) and (214.76,199) .. (212,199) .. controls (209.24,199) and (207,196.76) .. (207,194) -- cycle ;
\draw  [fill={rgb, 255:red, 0; green, 0; blue, 0 }  ,fill opacity=1 ][line width=2.25]  (317,84) .. controls (317,81.24) and (319.24,79) .. (322,79) .. controls (324.76,79) and (327,81.24) .. (327,84) .. controls (327,86.76) and (324.76,89) .. (322,89) .. controls (319.24,89) and (317,86.76) .. (317,84) -- cycle ;
\draw  [fill={rgb, 255:red, 0; green, 0; blue, 0 }  ,fill opacity=1 ][line width=2.25]  (428,194) .. controls (428,191.24) and (430.24,189) .. (433,189) .. controls (435.76,189) and (438,191.24) .. (438,194) .. controls (438,196.76) and (435.76,199) .. (433,199) .. controls (430.24,199) and (428,196.76) .. (428,194) -- cycle ;
\draw [line width=2.25]    (322,84) -- (212,194) ;
\draw [line width=2.25]    (322,84) -- (433,194) ;
\draw [line width=2.25]    (212,194) -- (433,194) ;
\draw [line width=2.25]    (322,37) -- (322,84) ;
\draw [line width=2.25]    (212,194) -- (173,218) ;
\draw [line width=2.25]    (433,194) -- (476,217) ;

\draw (251,123.4) node [anchor=north west][inner sep=0.75pt]    {$2$};
\draw (318,198.4) node [anchor=north west][inner sep=0.75pt]    {$1$};
\draw (388,123.4) node [anchor=north west][inner sep=0.75pt]    {$3$};
\draw (330,46.4) node [anchor=north west][inner sep=0.75pt]    {$1$};
\draw (463,188.4) node [anchor=north west][inner sep=0.75pt]    {$2$};
\draw (189,209.4) node [anchor=north west][inner sep=0.75pt]    {$3$};

\end{tikzpicture}
\caption{A branching graph of type $(0,(2,1),(1,1,1)$.}
\label{fig:brgraph}
\end{figure}

We illustrate a branching graph in the following example.

\begin{example}
\label{ex:brgraph}
We fix $g=0$, $\mu=(2,1)$ and $\nu=(1,1,1)$. A branching graph of type $(0,\mu,\nu)$ is depicted in \cref{fig:brgraph}. All three vertices have valency $3$. Since $b=3$, this reflects the partition $\nu$. The outer face has perimeter $2$, while the inner face has perimeter $1$ corresponding to $\mu$.
\end{example}

The following folklore theorem connects branching graphs to double Hurwitz numbers

\begin{theorem}
\label{thm-brangr}
    Let $g$ be a non-negative integer, $d$ a positive integer and $\mu,\nu$ partitions of $d$. Then, we have
    \begin{equation}
        H_g(\mu,\nu)=\sum_{[\Gamma]\in B_g(\mu,\nu)}\frac{1}{|\mathrm{Aut}(\Gamma)|}.
    \end{equation}
\end{theorem}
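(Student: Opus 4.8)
The plan is to realize every branching graph as the pullback of a fixed graph on the base $\mathbb{P}^1$ under a Hurwitz cover, and to show that this pullback induces an automorphism-preserving bijection between $\mathfrak{H}_g(\mu,\nu)$ and $B_g(\mu,\nu)$; summing the weights $1/|\mathrm{Aut}|$ on both sides then yields the claim.

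First I would fix a good graph $G$ on $\mathbb{P}^1$ consisting of a single vertex placed at $\infty$ together with $b$ half-edges running out to the simple branch points $p_1,\dots,p_b$, arranged so that their cyclic counter-clockwise order is $1,\dots,b$ and so that $\mathbb{P}^1\setminus G$ is a single open disk $D$ containing $0$. Given $[f]\in\mathfrak{H}_g(\mu,\nu)$ I set $\Gamma=f^{-1}(G)\subset S$ and check that $\Gamma$ is a branching graph of type $(g,\mu,\nu)$; this is a local computation at the three kinds of special fibres. Over $\infty$ the preimage labelled $i$ has local form $z\mapsto z^{\nu_i}$, so the $b$ base half-edges lift to $\nu_i\cdot b$ arcs emanating from that vertex, cyclically labelled $1,\dots,b$ repeated $\nu_i$ times, exactly as required. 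Over each $p_j$ the local form $z\mapsto z^2$ at the unique ramification point glues two lifted arcs into one full edge (labelled $j$), while the $d-2$ unramified points produce half-edges; this yields precisely $b$ full edges. Over $0$ the preimage labelled $i$ has local form $z\mapsto z^{\mu_i}$, so the lift of $D$ is a face covering $D$ with degree $\mu_i$, i.e. of perimeter $\mu_i$, and $S\setminus\Gamma=f^{-1}(D)$ is a disjoint union of disks because $D$ is one. Thus $\Phi([f]):=[\Gamma]$ lands in $B_g(\mu,\nu)$, and it is well defined on equivalence classes since an isomorphism $h$ with $f=f'\circ h$ is an orientation-preserving homeomorphism carrying $f^{-1}(G)$ to $f'^{-1}(G)$ and preserving all labels.

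Next I would establish that $\Phi$ is a bijection by constructing its inverse. Given a branching graph $\Gamma$, the complement $S\setminus\Gamma$ is a disjoint union of disks, each of which I map homeomorphically onto $D$; gluing these maps along the edges and half-edges of $\Gamma$ over the corresponding parts of $G$ produces a branched cover $f\colon S\to\mathbb{P}^1$ whose monodromy around $0,\infty,p_1,\dots,p_b$ is dictated by the face-, vertex- and edge-structure of $\Gamma$. That this reconstruction is well defined up to equivalence, and inverse to $\Phi$, is the content of the Riemann existence theorem: a branched cover of $\mathbb{P}^1$ with prescribed branch locus is determined, up to the equivalence of \cref{def:hurwitzdouble}, by its monodromy datum, and the isomorphism class of $\Gamma$ records precisely this datum together with the labellings imposed on the preimages of $0$ and $\infty$. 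The same correspondence identifies $\mathrm{Aut}(f)$ — automorphisms $h\colon S\to S$ with $f\circ h=f$ — with $\mathrm{Aut}(\Gamma)$: such an $h$ fixes $\Gamma=f^{-1}(G)$ setwise and acts as a label-preserving graph automorphism, while conversely every automorphism of $\Gamma$ extends uniquely over the faces to such an $h$. Hence $|\mathrm{Aut}(f)|=|\mathrm{Aut}(\Gamma)|$ term by term, and
\[
H_g(\mu,\nu)=\sum_{[f]\in\mathfrak{H}_g(\mu,\nu)}\frac{1}{|\mathrm{Aut}(f)|}=\sum_{[\Gamma]\in B_g(\mu,\nu)}\frac{1}{|\mathrm{Aut}(\Gamma)|}.
\]

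The main obstacle is the bijectivity, and more precisely the claim that the isomorphism class of $\Gamma$ carries exactly the monodromy information — no more and no less — needed to reconstruct $[f]$. Making this precise is where the Riemann existence theorem does the real work: one has to match the cyclic arrangement of half-edges around each vertex (the orientation datum) and the labellings of faces and vertices with the cycle structure of the monodromy permutations $\sigma_1,\sigma_2$ over $0,\infty$ and with the transpositions $\tau_j$ over the $p_j$, and then verify that simultaneous conjugation of the monodromy tuple corresponds exactly to graph isomorphism (with the connectedness of $S$ matching the transitivity condition, equivalently the connectedness of $\Gamma$). Equivalently, one may bypass the geometry and deduce the statement from the factorisation count already recorded above: $S_d$ acts on the set of admissible tuples $(\sigma_1,\tau_1,\dots,\tau_b,\sigma_2)$ by simultaneous conjugation, orbit–stabiliser gives $\frac{1}{d!}|\{\text{tuples}\}|=\sum_{\text{orbits}}1/|\mathrm{Stab}|$, and the dictionary above turns orbits into isomorphism classes of branching graphs and stabilisers into automorphism groups.
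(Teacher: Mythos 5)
Your proposal follows essentially the same route as the paper's (sketched) proof: pull back the star graph on $\mathbb{P}^1$ with its vertex at $\infty$ and half-edges to the simple branch points, verify locally that the preimage is a branching graph, and invoke the Riemann existence theorem to reconstruct the cover in the other direction. You supply more detail than the paper does (the fibrewise local analysis, the matching of automorphism groups, and the alternative orbit--stabiliser argument on monodromy tuples), but the underlying argument is the same and is correct.
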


\begin{proof}[Sketch of proof]
Let $\zeta_1,\dots,\zeta_b$ be the $b-$th roots of unity. The idea behind the proof is to consider the \textit{star graph} $G$ on $\mathbb{P}^1$ with a unique vertex $v$ at $\infty$ and non-intersecting half-edges connecting $v$ to each $\zeta_i$ (see \cref{fig:stargraph}). Choosing $p_i=\zeta_i$ in \cref{def:hurwitzdouble} and a Hurwitz cover $f\colon S\to\mathbb{P}^1$ of type $(g,\mu,\nu)$, we may consider the graph $\Gamma=f^{-1}(G)\subset S$, where vertices of $\Gamma$ are pre-images of the vertex on $\mathbb{P}^1$ and edges are obtained by two half-edges meeting at a pre-image of $\zeta_i$. It is easy to see that $f^{-1}(G)$ is indeed a branching graph of type $(g,\mu,\nu)$. For the other direction, we observe that a branching graph is a good graph with half-edges. Thus, the Riemann existence theorem together with the Galois correspondence for topological covers allows to re-construct the original branched covering of $\mathbb{P}^1$.
\end{proof}

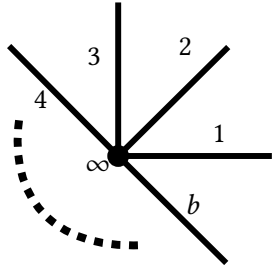
\begin{figure}

\tikzset{every picture/.style={line width=0.75pt}} 

\begin{tikzpicture}[x=0.75pt,y=0.75pt,yscale=-1,xscale=1]

\draw  [fill={rgb, 255:red, 0; green, 0; blue, 0 }  ,fill opacity=1 ][line width=2.25]  (326,137) .. controls (326,134.79) and (327.79,133) .. (330,133) .. controls (332.21,133) and (334,134.79) .. (334,137) .. controls (334,139.21) and (332.21,141) .. (330,141) .. controls (327.79,141) and (326,139.21) .. (326,137) -- cycle ;
\draw [line width=2.25]    (407.5,137) -- (330,137) ;
\draw [line width=2.25]    (385.5,82) -- (330,137) ;
\draw [line width=2.25]    (330,59.5) -- (330,137) ;
\draw [line width=2.25]    (275,81.5) -- (330,137) ;
\draw [line width=2.25]    (330,137) -- (384.5,191) ;
\draw [line width=3]  [dash pattern={on 3.38pt off 3.27pt}]  (280,119) .. controls (275,157) and (296,184) .. (342,182) ;

\draw (312,137.4) node [anchor=north west][inner sep=0.75pt]    {$\infty $};
\draw (376,119.4) node [anchor=north west][inner sep=0.75pt]    {$1$};
\draw (359,75.4) node [anchor=north west][inner sep=0.75pt]    {$2$};
\draw (313,80.4) node [anchor=north west][inner sep=0.75pt]    {$3$};
\draw (286,102.4) node [anchor=north west][inner sep=0.75pt]    {$4$};
\draw (363,154.4) node [anchor=north west][inner sep=0.75pt]    {$b$};

\end{tikzpicture}

    \caption{The star graph on $\mathbb{P}^1$.}
    \label{fig:stargraph}
\end{figure}

Next, we define Hurwitz galaxies.

\begin{definition}
 Let $d$ be a positive integer, $g$ a non-negative integer, $\mu,\nu$ partitions of $d$, and $b=2g-2+\ell(\mu)+\ell(\nu)$. Then, we define a Hurwitz galaxy of type $(g,\mu,\nu)$ to be a graph $G$, such that
\begin{enumerate}
    \item $G$ partitions $S$ into $\ell(\mu)+\ell(\nu)$ disjoint faces homeomorphic to an open disk.
    \item The faces are bi-coloured black and white, such that each edge is adjacent to a white and a black face.
    \item The white faces are labelled by $1, \dots, l(\mu)$, the black faces by $1, \dots, l(\nu)$, such that the boundary of each white face labelled $i$ contains $\mu_i \cdot (b+1)$ vertices and the boundary of each black face $i$ contains $\nu_i \cdot (b+1)$ vertices. 
    \item The vertices are coloured cyclically clockwise with respect to the adjacent white faces by $0,1, \dots, b$.
    \item For each $i \in \{1, \dots, b\}$ there are $d-1$ vertices with colour $i$, where $d-2$ of these vertices are 2-valent, and one is 4-valent. There are $d$ vertices with colour $0$, each of which are 2-valent. 
\end{enumerate}
An isomorphism between Hurwitz galaxies is an orientation-preserving homeomorphism of their respective surfaces which induces an isomorphism of graphs preserving vertex, edge and face labels. We denote by $G_{g}(\mu,\nu)$ be the set of isomorphism classes of Hurwitz galaxies of type $(g,\mu,\nu)$.
\end{definition}

\begin{example}
We fix $g,\mu,\nu$ as in \cref{ex:brgraph}. An example of a Hurwitz galaxy of type $(0,(2,1),(1,1,1))$ is depicted in \cref{fig:hurgal}. The arrows indicate the orientation of the faces with respect to the ordering of the adjacent vertices.
\begin{figure}
    \centering
    \includegraphics[scale=0.6]{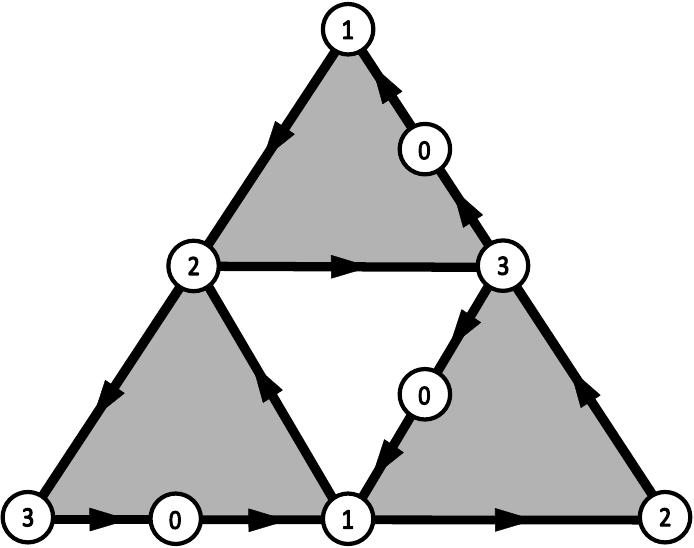}
    \caption{A Hurwitz galaxy of type $(0,(2,1),(1,1,1))$.}
    \label{fig:hurgal}
\end{figure}
\end{example}

\begin{figure}
    \centering
    \includegraphics[scale=0.6]{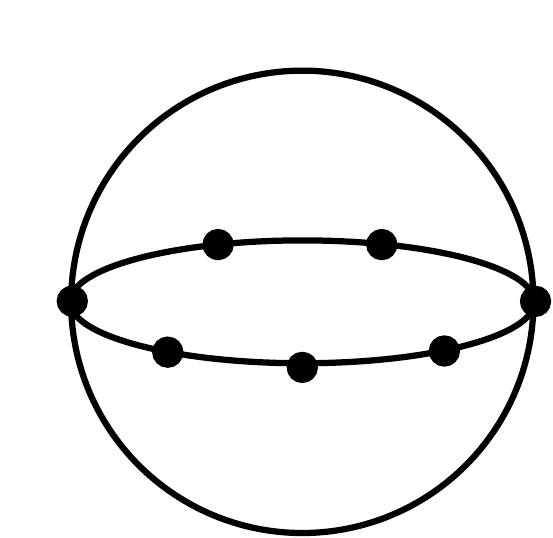}
    \caption{The circle graph on the Riemann sphere whose vertices are $\zeta_1,\dots,\zeta_b$, the $b$-th roots of unity for fixed $b$.}
    \label{fig:circlegraph}
\end{figure}

The following theorem is proved similarly to \cref{thm-brangr} by pulling back the graph in \cref{fig:circlegraph} whose vertices are the $b$-th roots of unity.

\begin{theorem}
Let $g$ be a non-negative integer, $d$ a positive integer and $\mu,\nu$ partitions of $d$. Then, we define
\begin{equation}
    H_g(\mu,\nu)=\sum_{[G]\in G_g(\mu,\nu)}\frac{1}{|\mathrm{Aut}(G)|}.
\end{equation}
\end{theorem}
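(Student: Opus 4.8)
The plan is to mirror the proof of \cref{thm-brangr}, replacing the star graph with the circle graph $G$ on $\mathbb{P}^1$ whose vertices are the $b$-th roots of unity $\zeta_1,\dots,\zeta_b$ (together with the dashed reference circle passing through $0$ and $\infty$, which records the bicolouring of faces). Given a Hurwitz cover $f\colon S\to\mathbb{P}^1$ of type $(g,\mu,\nu)$ with the branch points $p_i$ placed at the $\zeta_i$, I would set $\Gamma=f^{-1}(G)$ and verify that $\Gamma$ is a Hurwitz galaxy of type $(g,\mu,\nu)$. The key point is that the two complementary faces of $G$ on $\mathbb{P}^1$ are open disks, one containing $0$ and one containing $\infty$; pulling these back and colouring the $0$-disk preimages white and the $\infty$-disk preimages black gives the bicolouring in condition (2), and the cyclic labelling of the $b$-th roots of unity induces the vertex colouring by $0,1,\dots,b$ in condition (4).

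The main structural verification is the face- and vertex-count bookkeeping in conditions (3) and (5). First I would check that each preimage of the $0$-disk is a disk whose boundary wraps $\mu_i$ times around $G$; since $G$ has $b+1$ vertices on its boundary circle (the $b$ roots of unity plus the marked point separating colour $0$ from colour $b$), the boundary of a white face of local degree $\mu_i$ meets exactly $\mu_i\cdot(b+1)$ vertices, and symmetrically for black faces and $\nu_i$, giving (3). For (5), the vertices over each $\zeta_i$ correspond to the sheets of the cover near $p_i$: over a simple branch point with profile $(2,1,\dots,1)$ there are $d-1$ preimages, of which the one with ramification index $2$ is $4$-valent and the remaining $d-2$ unramified preimages are $2$-valent; over the unramified reference point carrying colour $0$ there are exactly $d$ preimages, all $2$-valent. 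The total face count $\ell(\mu)+\ell(\nu)$ and the disk property follow from the Riemann--Hurwitz formula together with the assumption that $G$ decomposes $\mathbb{P}^1$ into disks, so that $S\setminus\Gamma$ is a disjoint union of open disks.

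For the reverse direction I would argue exactly as in \cref{thm-brangr}: a Hurwitz galaxy is in particular a good graph equipped with the combinatorial ramification data, so the Riemann existence theorem together with the Galois correspondence for topological covers reconstructs a Hurwitz cover $f\colon S\to\mathbb{P}^1$ of type $(g,\mu,\nu)$ from $\Gamma$, with the white and black faces recovering the labelled preimages of $0$ and $\infty$. Finally, since an isomorphism of Hurwitz galaxies is precisely an orientation-preserving homeomorphism inducing a label-preserving graph isomorphism, the automorphisms of $\Gamma$ correspond bijectively to the automorphisms of the cover $f$, so the two correspondences are inverse up to automorphism and the weighted counts $\sum 1/|\Aut(f)|$ and $\sum 1/|\Aut(G)|$ agree.

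I expect the main obstacle to be making the vertex- and edge-incidence count in condition (5) fully rigorous, in particular tracking how the single $4$-valent vertex of colour $i$ over each simple branch point encodes the transposition $\tau_i$ while the $2$-valent vertices of colour $0$ encode the gluing of consecutive sheets; the bicolouring and cyclic orientation conditions (2) and (4) must be shown to be automatically consistent with the clockwise orientation inherited from $S$, which requires a careful local analysis near each preimage rather than a routine computation.
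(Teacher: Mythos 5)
Your proposal follows exactly the route the paper takes: the paper proves this theorem by the same pullback argument as \cref{thm-brangr}, simply replacing the star graph with the circle graph through the $b$-th roots of unity (plus an unramified reference point for colour $0$, which you correctly identify as needed for the count of $d$ two-valent colour-$0$ vertices), and using the Riemann existence theorem with the Galois correspondence for the converse. Your verification of conditions (2)--(5) and the matching of automorphism groups fills in details the paper leaves implicit, but the approach is the same.
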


\begin{remark}
\label{rem:brgalcorr}
 As a consequence of the above discussion, there is a bijection between branching graphs and Hurwitz galaxies. The combinatorial correspondence is described in \cite[Figure 3, Proposition 9]{zbMATH06791415}. In fact, the branching graph in \cref{fig:brgraph} and the Hurwitz galaxy in \cref{fig:hurgal} correspond to each other and give rise to the same branched cover in $\mathfrak{H}_g(\mu,\nu)$.
\end{remark}

\subsection{Distance Labelling and Geodesic Edges}
We now explore some of the combinatorics of Hurwitz galaxies. For a more detailed account, we refer to \cite{duchi2014bijections}. To begin with, we define a marked Hurwitz galaxy as a tuple $(G,x_0)$ where $G$ is a Hurwitz galaxy as defined above, and $x_0$ is a vertex of $G$ with colour $0$. When the marked vertex is clear from the context, we mostly denote a marked Hurwitz galaxy $(G,x_0)$ by $G$. We denote by $\mathcal{G}_g(\mu,\nu)$ the set of isomorphism classes of marked Hurwitz galaxies of type $(g,\mu,\nu)$, where isomorphisms respect the marked vertex.\\
The introduction of this distinguished vertex allows us to define a notion of distance on a Hurwitz galaxy.

\begin{definition}
    Let $G$ be a Hurwitz galaxy with marked vertex $x_0$. We define the distance labelling of a vertex $x$ of $G$ to be the number $\delta(x)$ of edges contained in a shortest oriented path from $x_0$ to $x$.
\end{definition}

Note that every vertex can be reached by an oriented path from $x_0$ due to the fact that $G$ is connected and each oriented edge belongs to a cycle. Thus $\delta$ is well-defined and we get the following properties:
\begin{enumerate}
    \item the colour and distance label of a vertex $x$ are related by $c(x)\equiv \delta (x)\, \mathrm{mod}\, b+1$. 
    \item For any oriented edge $x \rightarrow y$, $\delta(y) \equiv \delta(x)+1$ (mod $b+1$). Since the distance label of $y$ corresponds to the length of the shortest oriented path to $y$, we also must have $\delta(y) \leq \delta(x) +1$. 
\end{enumerate}

\begin{definition}
    Let $G$ be a marked Hurwitz galaxy with distinguished vertex $x_0$. We define the weight of an edge $e=x \rightarrow y$ as the non-negative integer quantity given by:
\begin{equation}\label{eq:weightdef}
    w(e)=\frac{\delta(x)+1-\delta(y)}{b+1}
\end{equation}
An edge $e$ is called geodesic if $w(e)=0$ and non-geodesic otherwise.  
\end{definition}

\begin{example}
We consider the Hurwitz galaxy in \cref{fig:hurgal} and fix the $0$ colored vertex on the bottom left as the marked vertex. The resulting marked Hurwitz galaxy is depicted in \cref{fig:hurgalmarked}.  The marked vertex is coloured green and the non-geodesic edges are coloured yellow. The vertices are labelled by $(a,b)$, where $a$ is the colour of the vertex and $b$ is the distance from the marked vertex.

\begin{figure}
    \centering
    \includegraphics[scale=0.7]{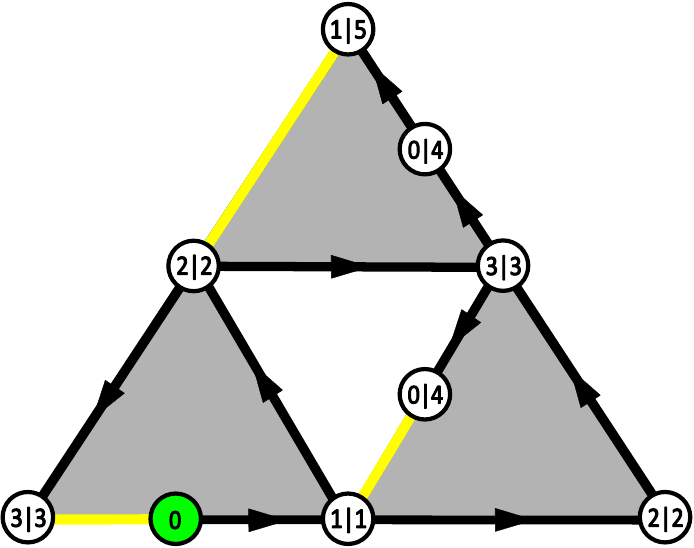}
    \caption{A marked Hurwitz galaxy of type $(0,(2,1),(1,1,1))$. The marked vertex is coloured green and the non-geodesic edges are coloured yellow. The vertices are labelled by $(a,b)$, where $a$ is the colour of the vertex and $b$ is the distance from the marked vertex.}
    \label{fig:hurgalmarked}
\end{figure}
\end{example}

Consider some arbitrary vertex $v$ of a face $F$ of degree $i$ of a marked Hurwitz galaxy $G$. Let $p$ be some path around the boundary of $F$ that starts at $v$, passes through each edge of $F$ once and finishes at $v$ (a total of $(b+1)i$ edges). Then the change in distance label from the beginning of $p$ to the end of $p$ is given by
\begin{equation}
    \sum_{e=[x,y] \in p} \delta(x)-\delta(y).
\end{equation}
Since $p$ begins and ends at $p$ however, the total variation in distance labels is in fact equal to $0$. Thus:
\begin{equation}
    \sum_{e=[x,y] \in p} \delta(x)-\delta(y) =0
\end{equation}
Using the definition of $w(e)$ as in equation (\ref{eq:weightdef}) we conclude that:
\begin{equation}
    \sum_{e=[x,y] \in p} w(e)=  
    \sum_{e=[x,y] \in p} \frac{\delta(x)+1-\delta(y)}{b+1}
    = \sum_{e=[x,y] \in p} \frac{1}{b+1} = \frac{(b+1)i}{b+1}
    =i
\end{equation}
Thus, we have the following lemma.

\begin{lemma}
    Let $G$ a marked Hurwitz galaxy. Then, the sum of the weight of the edges incident to any face with degree $i$ is $i$.
\end{lemma}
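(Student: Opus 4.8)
The plan is to prove the identity by a telescoping argument along the oriented boundary cycle of the face, packaging the computation that already precedes the statement into a clean proof while justifying the two ingredients that make it work. First I would fix a face $F$ of degree $i$ and invoke the defining property of a Hurwitz galaxy that $F$ is homeomorphic to an open disk. Consequently its boundary is a single closed curve, which the edges of $G$ decompose into a cycle
\begin{equation}
    x_0\to x_1\to\cdots\to x_{(b+1)i-1}\to x_0
\end{equation}
passing through each boundary edge exactly once; the length $(b+1)i$ comes from matching the boundary edge count to the boundary vertex count prescribed in item (3) of the definition. This is the step I expect to demand the most care: I must check that the boundary traversal is coherent with the edge orientations, so that each boundary edge genuinely appears as an oriented edge $e=x_j\to x_{j+1}$ for which $w(e)$ is defined by \eqref{eq:weightdef}. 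I would derive this from the cyclic colouring of vertices around each white face (item (4)), which reflects the orientation pulled back from the oriented star/circle graph on $\mathbb{P}^1$ and hence orients the whole boundary consistently.

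With the boundary cycle in hand, the remainder is the computation already displayed. Because the walk is closed, every vertex label is subtracted exactly as often as it is added, so the telescoping sum of distance differences vanishes:
\begin{equation}
    \sum_{e=[x,y]\in p}\bigl(\delta(x)-\delta(y)\bigr)=\sum_{j}\bigl(\delta(x_j)-\delta(x_{j+1})\bigr)=0.
\end{equation}
Substituting the definition of the weight then gives
\begin{equation}
    \sum_{e=[x,y]\in p}w(e)=\frac{1}{b+1}\sum_{e=[x,y]\in p}\bigl(\delta(x)+1-\delta(y)\bigr)=\frac{1}{b+1}\bigl(0+(b+1)i\bigr)=i,
\end{equation}
which is the asserted equality, since the edges traversed by $p$ are exactly those incident to $F$.

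Finally I would remark that none of the choices affect the outcome: the starting vertex $v$ is irrelevant, as a sum over a full cycle does not depend on where the traversal begins, and $w(e)$ depends only on the oriented edge. The sole genuinely nontrivial input is the disk condition guaranteeing that the boundary is a single cycle rather than a union of several; granting that, the statement is just the telescoping identity combined with the edge count $(b+1)i$.
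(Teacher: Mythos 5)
Your proof is correct and follows essentially the same telescoping argument that the paper presents in the displayed computation immediately preceding the lemma. If anything, your version is slightly more careful: the paper's intermediate equality $\sum_{e}\frac{\delta(x)+1-\delta(y)}{b+1}=\sum_{e}\frac{1}{b+1}$ reads as a termwise identity that is false edge-by-edge, whereas you correctly separate the vanishing telescoping sum from the count of the $(b+1)i$ boundary edges, and you also make explicit the (implicitly used) facts that the face boundary is a single coherently oriented cycle.
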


\subsection{Hurwitz mobiles}
Hurwitz mobiles are tree-like structures introduced in \cite{duchi2014bijections} consisting of black polygons, white polygons and edges between them. They are in some sense easier to work with than Hurwitz galaxies or branching graphs, due to their tree-like behaviour.\\
Since our use of Hurwitz mobiles is restricted to the genus $0$ case, we also only give the definition in that situation. In \cite{duchi2014bijections}, the Hurwitz mobiles we consider are called \textit{free}.

\begin{definition}
    Let $d$ be a positive integer and $\mu, \nu$ be ordered partitions of $d$. Furthermore let $b=\ell(\mu)+\ell(\nu)-2$. A Hurwitz mobile of type $(\mu, \nu)$ is then a connected partially oriented graph consisting of:  
\begin{enumerate}
    \item $d$ white nodes forming $\ell(\mu)$ disjoint oriented simple cycles, of length $\mu_i$, for $i=1, \dots ,\ell(\mu)$. We refer to these cycles as white polygons.
    \item $d$ black nodes forming $\ell(\nu)$ disjoint oriented simple cycles, of length $\nu_i$, for $i=1, \dots ,\ell(\nu)$. We refer to these cycles as black polygons.
    \item $b$ non-oriented edges with non-negative weights such that:
    \begin{itemize}
        \item each zero weight edge has its endpoints on white polygons.
        \item each positive weight edge is incident to a black polygon and a white polygon.
        \item The sum of the weights of edges incident to any $i$-gon is $i$.
    \end{itemize}
\end{enumerate}
An edge-labelled Hurwitz mobile is then a Hurwitz mobile in which each of the weighted edges has a distinct associated label taken in the set $\{ 0, 1, \dots , b \}$. We denote the set of Hurwitz mobiles of type $(\mu,\nu)$ by $M(\mu,\nu)$.
\end{definition}

\begin{example}
Let $\mu=(2,1)$ and $\nu=(1,1,1)$. A Hurwitz mobile of type $((2,1),(1,1,1))$ is depicted in \cref{fig:hurmobile}. The label of the edges is depicted as numbers inside circles, whereas the number of dashes of each edge indicates their weight. For example, the edge labelled $3$ has weight $0$, whereas the edge labelled $0$ has weight $1$.

\begin{figure}
    \centering
    \includegraphics{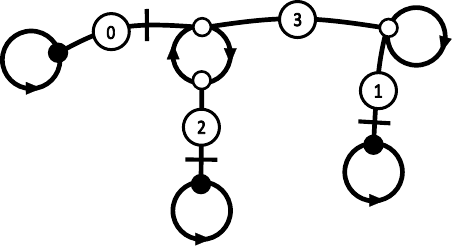}
    \caption{A Hurwitz mobile of type $((2,1),(1,1,1))$}
    \label{fig:hurmobile}
\end{figure}
\end{example}

Given a mobile $M$, there exists an operation $\sigma$ on $M$ called a shift that when applied to $M$, yields another mobile $\sigma(M)$. This operation is of interest to us in this report as it partitions $\mathcal{M}(\mu,\nu)$, the set of Hurwitz mobiles, into so called shift-equivalence classes.

\begin{definition}
Let $M$ be a Hurwitz mobile. Its shift $\sigma(M)$ is the Hurwitz mobile obtained by translating the two endpoints of the edge of label $b$ to the next vertices of the adjacent polygons according to their orientation, and then incrementing each edge label in $M$ by $1$ modulo $b+1$.\\
We call two Hurwitz mobiles shift equivalent if  one may be obtained by the other by a finite sequence of shifts. We will denote by $\mathcal{M}(\mu,\nu)$ the set of shift equivalence classes of mobiles in $M(\mu,\nu)$.
\end{definition}

The local structure of the shift is depicted in \cref{fig:shiftex}

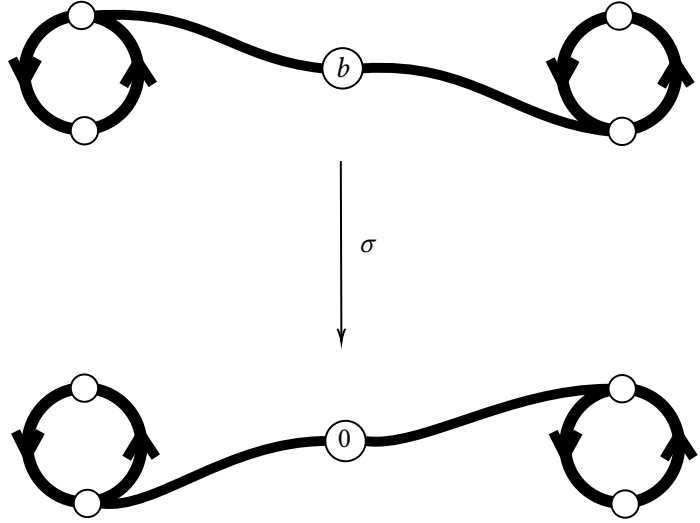
\begin{figure}[H]

\tikzset{every picture/.style={line width=0.75pt}} 

\begin{tikzpicture}[scale=0.6,x=0.75pt,y=0.75pt,yscale=-1,xscale=1]

\draw  [line width=4.5]  (44,82.5) .. controls (44,55.99) and (65.49,34.5) .. (92,34.5) .. controls (118.51,34.5) and (140,55.99) .. (140,82.5) .. controls (140,109.01) and (118.51,130.5) .. (92,130.5) .. controls (65.49,130.5) and (44,109.01) .. (44,82.5) -- cycle ;
\draw [line width=3.75]    (34,71) -- (44,91) ;
\draw [line width=3.75]    (44,91) -- (55,71) ;
\draw [line width=3.75]    (139.5,73.5) -- (127.5,93.5) ;
\draw [line width=3.75]    (139.5,73.5) -- (152,92.5) ;
\draw  [fill={rgb, 255:red, 255; green, 255; blue, 255 }  ,fill opacity=1 ] (80.5,34.5) .. controls (80.5,28.15) and (85.65,23) .. (92,23) .. controls (98.35,23) and (103.5,28.15) .. (103.5,34.5) .. controls (103.5,40.85) and (98.35,46) .. (92,46) .. controls (85.65,46) and (80.5,40.85) .. (80.5,34.5) -- cycle ;
\draw  [fill={rgb, 255:red, 255; green, 255; blue, 255 }  ,fill opacity=1 ] (83,131.5) .. controls (83,125.15) and (88.15,120) .. (94.5,120) .. controls (100.85,120) and (106,125.15) .. (106,131.5) .. controls (106,137.85) and (100.85,143) .. (94.5,143) .. controls (88.15,143) and (83,137.85) .. (83,131.5) -- cycle ;
\draw  [line width=4.5]  (496,83) .. controls (496,56.49) and (517.49,35) .. (544,35) .. controls (570.51,35) and (592,56.49) .. (592,83) .. controls (592,109.51) and (570.51,131) .. (544,131) .. controls (517.49,131) and (496,109.51) .. (496,83) -- cycle ;
\draw [line width=3.75]    (486,71.5) -- (496,91.5) ;
\draw [line width=3.75]    (496,91.5) -- (507,71.5) ;
\draw [line width=3.75]    (591.5,74) -- (579.5,94) ;
\draw [line width=3.75]    (591.5,74) -- (604,93) ;
\draw  [fill={rgb, 255:red, 255; green, 255; blue, 255 }  ,fill opacity=1 ] (532.5,35) .. controls (532.5,28.65) and (537.65,23.5) .. (544,23.5) .. controls (550.35,23.5) and (555.5,28.65) .. (555.5,35) .. controls (555.5,41.35) and (550.35,46.5) .. (544,46.5) .. controls (537.65,46.5) and (532.5,41.35) .. (532.5,35) -- cycle ;
\draw  [fill={rgb, 255:red, 255; green, 255; blue, 255 }  ,fill opacity=1 ] (535,132) .. controls (535,125.65) and (540.15,120.5) .. (546.5,120.5) .. controls (552.85,120.5) and (558,125.65) .. (558,132) .. controls (558,138.35) and (552.85,143.5) .. (546.5,143.5) .. controls (540.15,143.5) and (535,138.35) .. (535,132) -- cycle ;
\draw   (294.5,79) .. controls (294.5,69.61) and (302.11,62) .. (311.5,62) .. controls (320.89,62) and (328.5,69.61) .. (328.5,79) .. controls (328.5,88.39) and (320.89,96) .. (311.5,96) .. controls (302.11,96) and (294.5,88.39) .. (294.5,79) -- cycle ;
\draw [line width=3.75]    (103.5,34.5) .. controls (196,28.5) and (216,70.5) .. (294.5,79) ;
\draw [line width=3.75]    (328.5,79) .. controls (421,73) and (456.5,123.5) .. (535,132) ;
\draw  [line width=4.5]  (46.5,396) .. controls (46.5,369.49) and (67.99,348) .. (94.5,348) .. controls (121.01,348) and (142.5,369.49) .. (142.5,396) .. controls (142.5,422.51) and (121.01,444) .. (94.5,444) .. controls (67.99,444) and (46.5,422.51) .. (46.5,396) -- cycle ;
\draw [line width=3.75]    (36.5,384.5) -- (46.5,404.5) ;
\draw [line width=3.75]    (46.5,404.5) -- (57.5,384.5) ;
\draw [line width=3.75]    (142,387) -- (130,407) ;
\draw [line width=3.75]    (142,387) -- (154.5,406) ;
\draw  [fill={rgb, 255:red, 255; green, 255; blue, 255 }  ,fill opacity=1 ] (83,348) .. controls (83,341.65) and (88.15,336.5) .. (94.5,336.5) .. controls (100.85,336.5) and (106,341.65) .. (106,348) .. controls (106,354.35) and (100.85,359.5) .. (94.5,359.5) .. controls (88.15,359.5) and (83,354.35) .. (83,348) -- cycle ;
\draw  [fill={rgb, 255:red, 255; green, 255; blue, 255 }  ,fill opacity=1 ] (85.5,445) .. controls (85.5,438.65) and (90.65,433.5) .. (97,433.5) .. controls (103.35,433.5) and (108.5,438.65) .. (108.5,445) .. controls (108.5,451.35) and (103.35,456.5) .. (97,456.5) .. controls (90.65,456.5) and (85.5,451.35) .. (85.5,445) -- cycle ;
\draw  [line width=4.5]  (498.5,396.5) .. controls (498.5,369.99) and (519.99,348.5) .. (546.5,348.5) .. controls (573.01,348.5) and (594.5,369.99) .. (594.5,396.5) .. controls (594.5,423.01) and (573.01,444.5) .. (546.5,444.5) .. controls (519.99,444.5) and (498.5,423.01) .. (498.5,396.5) -- cycle ;
\draw [line width=3.75]    (488.5,385) -- (498.5,405) ;
\draw [line width=3.75]    (498.5,405) -- (509.5,385) ;
\draw [line width=3.75]    (594,387.5) -- (582,407.5) ;
\draw [line width=3.75]    (594,387.5) -- (606.5,406.5) ;
\draw  [fill={rgb, 255:red, 255; green, 255; blue, 255 }  ,fill opacity=1 ] (535,348.5) .. controls (535,342.15) and (540.15,337) .. (546.5,337) .. controls (552.85,337) and (558,342.15) .. (558,348.5) .. controls (558,354.85) and (552.85,360) .. (546.5,360) .. controls (540.15,360) and (535,354.85) .. (535,348.5) -- cycle ;
\draw  [fill={rgb, 255:red, 255; green, 255; blue, 255 }  ,fill opacity=1 ] (537.5,445.5) .. controls (537.5,439.15) and (542.65,434) .. (549,434) .. controls (555.35,434) and (560.5,439.15) .. (560.5,445.5) .. controls (560.5,451.85) and (555.35,457) .. (549,457) .. controls (542.65,457) and (537.5,451.85) .. (537.5,445.5) -- cycle ;
\draw   (297,392.5) .. controls (297,383.11) and (304.61,375.5) .. (314,375.5) .. controls (323.39,375.5) and (331,383.11) .. (331,392.5) .. controls (331,401.89) and (323.39,409.5) .. (314,409.5) .. controls (304.61,409.5) and (297,401.89) .. (297,392.5) -- cycle ;
\draw    (310,157) -- (310.49,307) ;
\draw [shift={(310.5,309)}, rotate = 269.81] [color={rgb, 255:red, 0; green, 0; blue, 0 }  ][line width=0.75]    (10.93,-3.29) .. controls (6.95,-1.4) and (3.31,-0.3) .. (0,0) .. controls (3.31,0.3) and (6.95,1.4) .. (10.93,3.29)   ;
\draw [line width=3.75]    (108.5,445) .. controls (153.5,451.5) and (214.5,393) .. (297,392.5) ;
\draw [line width=3.75]    (331,392.5) .. controls (376,399) and (452.5,349) .. (535,348.5) ;

\draw (304,66.4) node [anchor=north west][inner sep=0.75pt]    {$b$};
\draw (305,380.1) node [anchor=north west][inner sep=0.75pt]    {$0$};
\draw (323.5,220.4) node [anchor=north west][inner sep=0.75pt]    {$\sigma $};

\end{tikzpicture}

  \caption{The action of $\sigma$ on a non-weighted edge of label $r$.}\label{fig:shiftex}
\end{figure}

\begin{proposition}
    Given a Hurwitz mobile $M\in M(\mu,\nu)$, then we have $\sigma^{b+1}(M)=M$ and there are $b+1$ distinct graphs in the shift equivalence class of each mobile in $M(\mu,\nu)$.
\end{proposition}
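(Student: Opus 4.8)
The plan is to prove the two assertions in turn: first that $\sigma^{b+1}$ fixes every mobile, which bounds each shift equivalence class by $b+1$, and then that no smaller power of $\sigma$ can fix a mobile, which shows the class has exactly $b+1$ elements.

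For the first assertion I would track the two effects of $\sigma$ separately. Each application increments every edge label by $1$ modulo $b+1$, so after $b+1$ applications all labels are restored. Meanwhile, the only geometric change made by a single shift is that the endpoints of the current edge of label $b$ are advanced one step along their incident polygons. Since the labels cycle, over $b+1$ consecutive shifts each of the edges plays the role of the label-$b$ edge exactly once, and is therefore translated exactly once. Hence in $\sigma^{b+1}(M)$ every edge endpoint sits one vertex further along its polygon than in $M$, in the direction of the orientation. This is precisely the image of $M$ under the map that advances every white and every black polygon by one step; as this map is an orientation-preserving automorphism of each cyclic polygon preserving all labels, weights and incidences, it is an isomorphism of mobiles, giving $\sigma^{b+1}(M)=M$.

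For the second assertion I would first record the structural fact that a genus-zero mobile is \emph{tree-like}: contracting each of the $\ell(\mu)+\ell(\nu)$ polygons to a single node turns $M$ into a connected graph on $\ell(\mu)+\ell(\nu)$ nodes with exactly $b+1=\ell(\mu)+\ell(\nu)-1$ edges (one per label in $\{0,\dots,b\}$), hence a tree; in particular it has no loops or multiple edges and, whenever $\ell(\mu)+\ell(\nu)\ge 2$, it has at least one leaf. Now let $p$ be the minimal positive integer with $\sigma^p(M)=M$; by the first assertion $p\mid b+1$, so it suffices to rule out $p<b+1$. The key point is that $\sigma$ never changes which pair of polygons an edge joins, it only slides endpoints along those polygons. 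Fix a leaf polygon $P_0$ and let $e$ be its unique incident edge, of label $k$ in $M$. In $\sigma^p(M)$ the same edge $e$ is still the unique edge at $P_0$, but it now carries the label $k+p \pmod{b+1}$. Any isomorphism $\sigma^p(M)\to M$ preserves polygon labels, so it must send $P_0$ to $P_0$ and hence $e$ to $e$, while preserving edge labels; this forces $k+p\equiv k \pmod{b+1}$, i.e. $p\equiv 0$, so $p=b+1$.

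The routine part is the first assertion, where the only thing to check carefully is that the net one-step rotation of all polygons is genuinely realised by a mobile isomorphism. The crux is the lower bound in the second assertion, and the whole difficulty is dissolved by the tree observation: the existence of a leaf provides an edge whose incidence is rigid under $\sigma$, turning the label increment into a clean obstruction to any short period. I would also check the degenerate case $\ell(\mu)=\ell(\nu)=1$, where $b+1=1$ and the claim is immediate, to make sure the leaf argument is only invoked when a leaf exists.
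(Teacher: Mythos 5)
Your proof is correct, and it is worth noting that the paper states this proposition without any proof (it is effectively imported from the Duchi--Poulalhon--Schaeffer framework), so your argument supplies content the paper omits. Both halves hold up: for $\sigma^{b+1}(M)=M$, the observation that each edge is the label-$b$ edge exactly once over $b+1$ shifts, so that every attachment point advances by exactly one step, is right, and since the nodes of a polygon carry no individual labels, the simultaneous one-step rotation of all polygons is indeed an isomorphism of edge-labelled mobiles. For the exact period, your tree/leaf argument is the crux and it works: a connected genus-zero mobile with $\ell(\mu)+\ell(\nu)$ polygons and $\ell(\mu)+\ell(\nu)-1$ labelled edges contracts to a tree (connectivity rules out loops and multi-edges by edge count), a leaf polygon is fixed by any label-preserving isomorphism because polygon labels are distinct, and its unique incident edge then pins down $p\equiv 0 \pmod{b+1}$. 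One small definitional point you implicitly resolve: the paper's \cref{def:cactus}-adjacent definition of a mobile says ``$b$ non-oriented edges,'' but the worked example (four edges labelled $0,1,2,3$ with $b=3$) and the connectivity count both force $b+1$ edges, one per label in $\{0,\dots,b\}$; your proof correctly uses the $b+1$-edge reading, without which neither the shift nor the proposition is well posed. The only step I would ask you to spell out slightly more is the standing convention that isomorphisms of mobiles preserve polygon labels, orientations, edge labels and weights --- the paper never states this for mobiles, but it is the only notion under which the proposition is true, and your leaf argument depends on it.
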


\begin{example}
    The shift equivalence class of the Hurwitz galaxy in \cref{fig:hurgalmarked} is depicted in \cref{fig:shiftequivalence}.

    \begin{figure}[H] 
  \centering
 \begin{subfigure}[b]{0.3\linewidth}
  \centering
  \includegraphics[width=0.9\linewidth]{mobileexample.pdf}
  \caption{}
 \end{subfigure}
    \begin{subfigure}[b]{0.3\linewidth}
  \centering
  \includegraphics[width=0.9\linewidth]{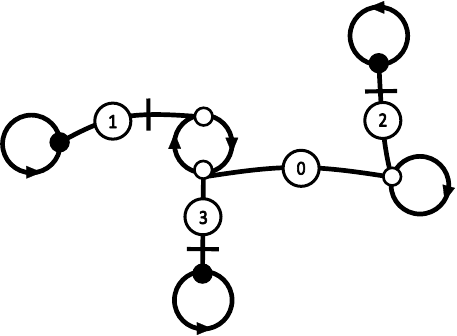}
  \caption{}
  \end{subfigure}
  \begin{subfigure}[b]{0.3\linewidth}
  \centering
  \includegraphics[width=0.9\linewidth]{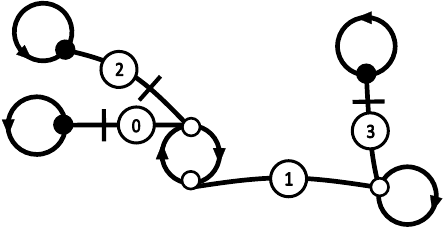}
  \caption{}
 \end{subfigure}
 \begin{subfigure}[b]{0.3\linewidth}
  \centering
  \includegraphics[width=0.9\linewidth]{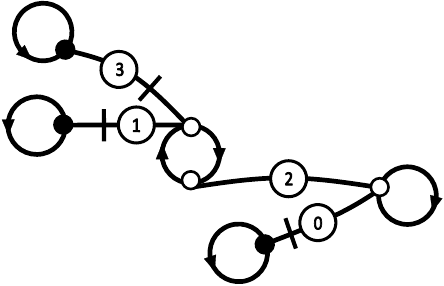}
  \caption{}
 \end{subfigure}
  \begin{subfigure}[b]{0.3\linewidth}
  \centering
  \includegraphics[width=0.9\linewidth]{mobileexample.pdf}
  \caption{}
 \end{subfigure}
  \caption{(a) The mobile $M$ of Figure \ref{fig:hurgalmarked}. (b)-(e) Successive applications of $\sigma$ to the mobile $M$.}\label{fig:shiftequivalence}
\end{figure}
\end{example}

\subsection{Hurwitz Dyck Paths}
In this subsection, we lay the groundwork towards understanding the relation between Hurwitz galaxies and Hurwitz mobiles. In order to achieve this, we introduce the notion of Hurwitz Dyck paths, which are a special class of Dyck paths that appear in the combinatorics of Hurwitz mobiles. These objects build on ideas of \cite{duchi2014bijections} and prove to be closely related to the classes of Dyck path found in \cite{dyckmodk}. In \cref{sec:bij} we provide a bijection between these objects and Hurwitz galaxies and mobiles, before using this bijection to provide a classification of pruned Hurwitz mobiles in \cref{sec:prunhurdyck}.

\begin{definition}\label{def:dyckpath}
    For some positive integer $n$, a Dyck path of length $2n$ is a lattice path consisting of $n$ up-steps of the form $u=(1,1)$ as well as $n$ down-steps of the form $d=(1,-1)$ which starts at $(0,0)$, ends at $(2n,0)$ and stays (weakly) above the $x$-axis.
\end{definition}
Additionally, an occurrence of $ud$ (resp. $du$) within a Dyck path is called a \textit{peak} (resp. \textit{valley}), an occurrence of $u^h d^h$ is called a \textit{pyramid of height} $h$, a down-step beginning at height $1$ is called a \textit{return step} and for any up-step $U$ its \textit{matching down-step} is the nearest down-step (to the right) such that the number of up-steps and down-steps between them are equal. We say that a Dyck path with only one return step is \textit{primitive}. For a vertex $v$, we denote its $x$-coordinate by $v_x$ and its $y$-coordinate by $v_y$.

Before defining single Hurwitz Dyck paths, we introduce two new notions.

\begin{definition}\label{def:kdyckpath}
     Let $b$ be a positive integer and $n$ some multiple of $b$. A $b$-Dyck path of length $2n$ is a Dyck path of length $2n$ in which the length of all maximal increases (a maximal path of upsteps) is a multiple of $b$.\\
     A vertex $v$ is called \textit{distinguished} if the number of up-steps in the Dyck path prior to $v$ is a multiple of $b$ and $v$ is succeeded by an up-step. Let $m$ be the number of distinguished vertices, then we label the distinguished vertices by $1,\dots,m$.
 \end{definition}

\begin{definition}\label{def:tdyckpath}
    Let $t$ be a positive integer. A $t$-marked Dyck path is a Dyck path in which $t$ pairs of vertices of down-steps have been marked, such that for any pair $(v,w)$ the $y$-coordinates of $v$ and $w$ co-incide. Furthermore, the $t$ pairs are labelled $1,\dots,t$.\\
    Given a pair $(v,w)$, we define its interval $[v,w]$ to be the lattice paths from $v$ to $w$ in the Dyck path. We define its \textit{essential interval} $[v,w]^e$ as 
    \begin{equation}
            [v,w]^e=[v,w]\backslash \bigcup_{(v',w')}[v',w']
    \end{equation}
    where the union runs over all marked pairs $(v',w')$ in $[v,w]$.
    For a marked pair $(v,w)$ in a $b$-Dyck path, we define its degree as 
    \begin{equation}
        \mathrm{deg}((v,w))=\frac{|\{\textrm{up-steps in }[v,w]^e\}|}{b}=\frac{|\{\textrm{down-steps in }[v,w]^e\}|}{b}.
    \end{equation}
\end{definition}

We are now ready to define single Hurwitz Dyck paths.

\begin{definition}\label{def:singlehurwitzdyck} 
    Let $d$ be a positive integer and $\mu$ be an ordered partition of $d$. Furthermore let $b=\ell(\mu)+d-2$.\\
    A \textit{single Hurwitz Dyck} path of type $\mu$ is then a primitive $\ell(\mu)$-marked $b$-Dyck path of length $2bd$, such that:
    \begin{enumerate}
        \item the pair $((0,0),(2bd,0))$ is marked,
        \item the set of $y$-coordinates of marked pairs modulo $b+1$ is of size $\ell(\mu)$,
        \item the set of $y$-coordinate of distinguished vertices modulo $b+1$ is of size $d$,
        \item the set of non-zero $y$-coordinates of marked pairs modulo $b+1$ and  the set of non-zero $y$-coordinate of distinguished vertices modulo $b+1$ are disjoint,
        \item for any two marked pairs of vertices $(v_1,w_1)$ and $(v_2,w_2)$, assuming w.l.o.g. that $(v_1)_x\le (w_1)_x$ and $(v_2)_x\le(w_2)_x$, we have that if $(v_1)_x\le (v_2)_x$, then $(w_2)_x\le(w_1)_x$. In other words, pairs of vertices admit a non-crossing condition,
        \item the marked pair labelled $i$ has degree $\mu_i$.
    \end{enumerate}
    We denote the set of Hurwitz Dyck paths by $D(\mu)$.
\end{definition}

\begin{example}
In \cref{fig:contourfunctionexample}, we illustrate a single Hurwitz Dyck path of type $(2,1)$. The distinguished vertices are marked red. The green vertices represent the marked pairs in this example. Note that $b=3$.

    \begin{figure}
  \centering
  \def\svgwidth{.6\textwidth}
  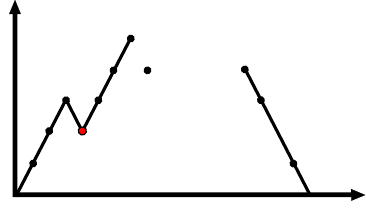
  \caption{A single Hurwitz Dyck path of type $\mu=(2,1)$.}\label{fig:contourfunctionexample}
\end{figure}
\end{example}
\subsection{Basics on Cacti}
In \cref{sec:bij}, we outline the bijection between marked Hurwitz galaxies of genus $0$ and shift equivalence classes of mobiles given in \cite{duchi2014bijections}. While the authors of \textit{loc.cit} construct an explicit map $\Phi\colon\mathcal{G}_0(\mu,\nu)\to\mathcal{M}(\mu,\nu)$, the proof of its bijectivity moves through combinatorial objects called \textit{cacti}. We now review some of the basic notions.

\begin{definition} \label{def:cactus}
\emph{\cite{duchi2014bijections}} We denote by $C_g(\mu,\nu)$ the set of graphs on surfaces of genus $g$ with one boundary such that the following hold:
\begin{enumerate}
    \item (Face colour condition) There are $l(\mu)$ white faces and $l(\nu)$ black faces, labelled $1,\dots,\ell(\mu)$ and $1,\dots,\ell(\nu)$. The white face labelled $i$ has degree $\mu_i$. Similarly, the degree of the black face labelled $j$ is $\nu_j$.
    \item There are three types of edges. These are:
    \begin{itemize}
        \item Internal edges, that are incident to a black face and a white face.
        \item White boundary edges, that are oriented and have a white face on their right hand side.
        \item Black boundary edges, that are oriented and have a black face on their left hand side.
    \end{itemize}
    We say that a vertex is active if it has at least one incoming white boundary edge.
    \item (Vertex colour condition) All vertices are incident to the boundary and have a colour in $\{ 0, \dots, b \}$. Each boundary edge $u \rightarrow v$ that joins a vertex $u$ with colour $c(u)$ to a vertex $v$ with colour $c(v)$ satisfies $c(v)=c(u)+1\,\mathrm{mod}\,b+1$.
    \item (Hurwitz condition) There are $d-1$ active vertices of each colour. 
\end{enumerate}
We call an element $C$ of $C_g(\mu,\nu)$ a cactus of type $(g,\mu,\nu)$.
\end{definition}

\begin{example}
\label{ex:cactusex}
    Let $g=0$, $\mu=(2,1)$ and $\nu=(1,1,1)$. A cactus of type $(0,(2,1),(1,1,1))$ is depicted in \cref{fig:cactusexample}.

\begin{figure}[H]
  \centering
  \includegraphics[width=0.5\linewidth]{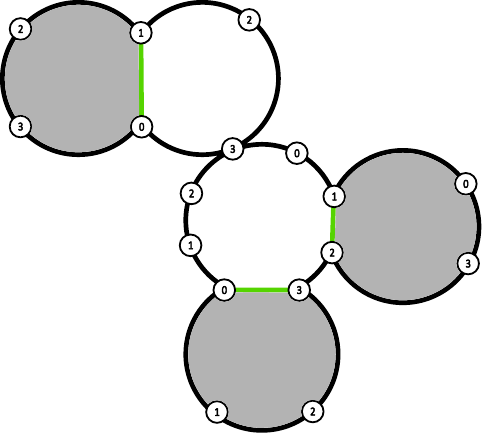}
  \caption{An example of a cactus $C$ of type $(0,(2,1),(1,1,1))$. Internal edges are shown in green. The vertex colours are included within the small circles along the boundary of $C$.}\label{fig:cactusexample}
\end{figure}
\end{example}

One can see that for cacti in $C_0(\mu,\nu)$, like $C$ in Figure \ref{fig:cactusexample}, the graph consists of polygons that are arranged to form a tree-like structure (a kind of cactus -- hence the terminology). Due to this tree-like structure, as well as the black and white galaxy-like faces, we see that cacti share some of the important properties of both Hurwitz galaxies and Hurwitz mobiles. As a first step, we associate a cactus to a marked Hurwitz galaxy. We begin with the following definitions.

\begin{definition}
Let $G\in\mathcal{G}_0(\mu,\nu)$ and $v$ be a vertex with two incoming geodesic edges. We define a splitting of $v$ in $G$ as the graph $\tilde{G}$ obtained by replacing $v$ by two new vertices, each carrying one incoming geodesic edge and the outgoing edge following it in clockwise direction around $v$. 
\end{definition}

\begin{definition}
Let $G\in\mathcal{G}_0(\mu,\nu)$ be a marked Hurwitz galaxy. Then, we define $\Theta(G)$ to be the marked graph obtained from $G$ by splitting all vertices with two incoming geodesic edges and removing non-geodesic edges.
\end{definition}

\begin{example}
    Let $G$ be the marked Hurwitz galaxy in \cref{fig:hurgalmarked}. The graph $\Theta(G)$ is illustrated in \cref{fig:treegalaxy}.

\begin{figure}
  \centering
  \includegraphics[width=0.5\linewidth]{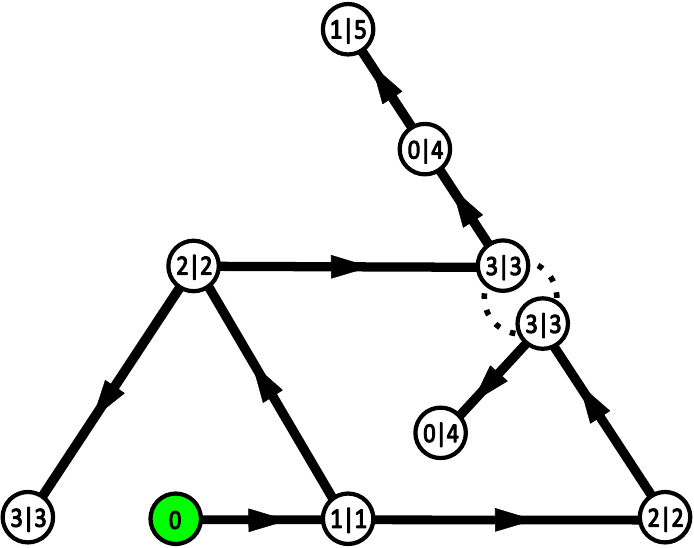}
  \caption{The construction of the tree $\Theta(G)$ corresponding to the marked galaxy $G$ of \cref{fig:hurgalmarked}. Notice the splitting of the 4-valent vertex of colour $3$, as well as the distance labelling which agrees with the distance labelling in  \cref{fig:hurgalmarked}.}\label{fig:treegalaxy}
\end{figure}
\end{example}

Note that by \cite[Proposition 3]{duchi2014bijections}, the graph $\Theta(G)$ is a tree  and for each vertex $v\in\Theta(G)$ the distance $\delta(v)$ (defined in the Hurwitz galaxy) is the distance of $v$ to the marked vertex in $\Theta(G)$.

\begin{construction}
\label{constr:galtocac}
    Let $G$ be a marked Hurwitz galaxy on some compact oriented surface $S$ of genus $g$. 
    \begin{enumerate}
        \item Consider the graph $\Theta(G)$ on $S$
        \item Since $\Theta(G)$ is a tree, we have that $S \setminus \Theta(G)$ has one open boundary and its closure is a surface $S^{\partial}$ of genus $g$ with one boundary.
        \item  We let $\Gamma(G)$ be the graph induced by $G$ on $S^{\partial}$.
    \end{enumerate} 
    By \cite[Lemma 4]{duchi2014bijections}, we have $\Gamma(G)\in C_g(\mu,\nu)$.
\end{construction}

We see that $\Gamma(G)$ contains faces and non-geodesic edges that it inherits from the faces and non-geodesic edges of $G$.\\
The geodesic edges of $G$ produce two boundary edges of $\Gamma(G)$, one white boundary edge and one black boundary edge. In total, we have the cases illustrated in \cref{fig:localcactus} (see also \cite[Section 4.1]{duchi2014bijections}).

\begin{figure}
  \centering
 \begin{subfigure}[b]{0.8\linewidth}
  \centering
  \includegraphics[width=0.75\linewidth]{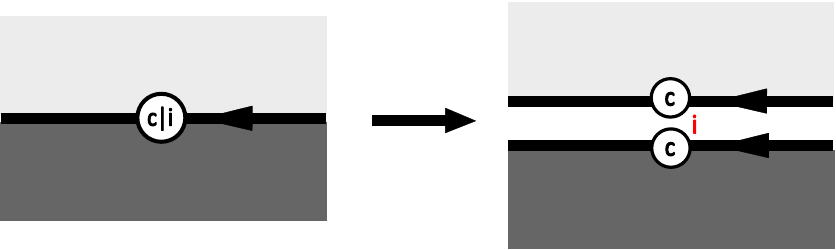}
  \caption{A vertex with one incoming geodesic edge produces two boundary edges in the construction of $\Gamma(G)$ (one white boundary edge and one black boundary edge).}
 \end{subfigure}
    \begin{subfigure}[b]{0.8\linewidth}
  \centering
  \includegraphics[width=0.75\linewidth]{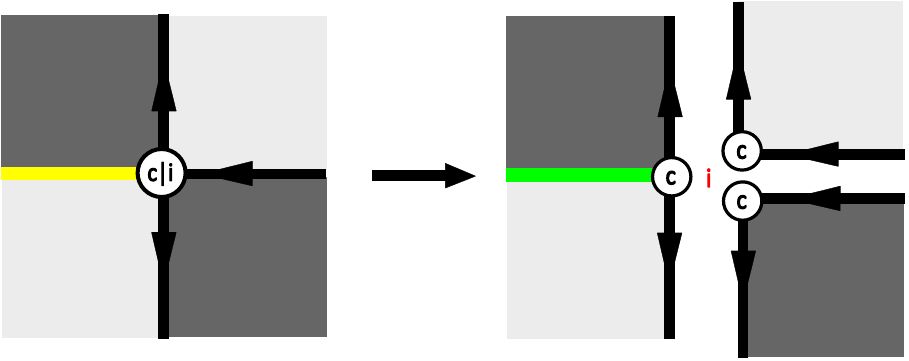}
  \caption{A 4-valent vertex with one incoming geodesic edge and one incoming non-geodesic edge produces the structure shown on the right. There are three vertices corresponding to the vertex of colour $c$, as well as an internal edge corresponding to the non-geodesic edge in $G$.}
  \end{subfigure}
  \begin{subfigure}[b]{0.8\linewidth}
  \centering
  \includegraphics[width=0.75\linewidth]{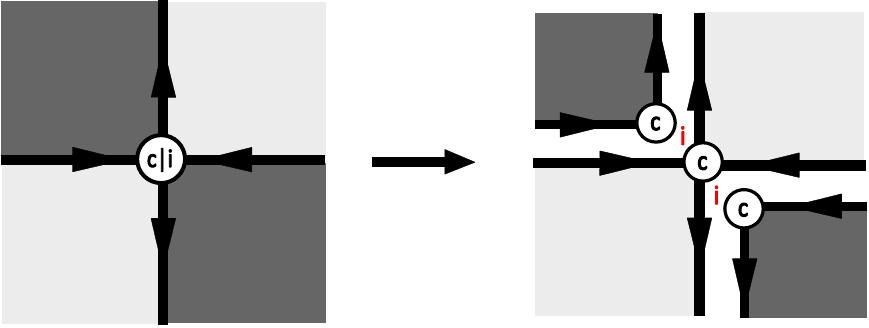}
  \caption{A 4-valent vertex with two incoming geodesic edges produces three vertices in $\Gamma(G)$; one of these is incident to two white polygons.}
  \end{subfigure}
  \caption{Local analysis of the configurations around vertices in a galaxy, as well as the structure that these configurations produce in the graph $\Gamma(G)$.}\label{fig:localcactus}
\end{figure}

 We call $(v,e,e')$ a \textit{boundary corner}, for a vertex $v$ with two adjacent edges $e$ and $e'$ traversing along the same part of the boundary .

\begin{example}
    The cactus considered in \cref{ex:cactusex} and illustrated in \cref{fig:cactusexample}, is obtained from the marked Hurwitz galaxy in \cref{fig:hurgalmarked} via \cref{constr:galtocac}. This is illustrated at the top of \cref{fig:galtocac}. At the bottom, we include the canonical corner labelling in red.
    \begin{figure}
        \centering
        \begin{subfigure}[b]{0.5\linewidth}
          \includegraphics[width=0.9\linewidth]{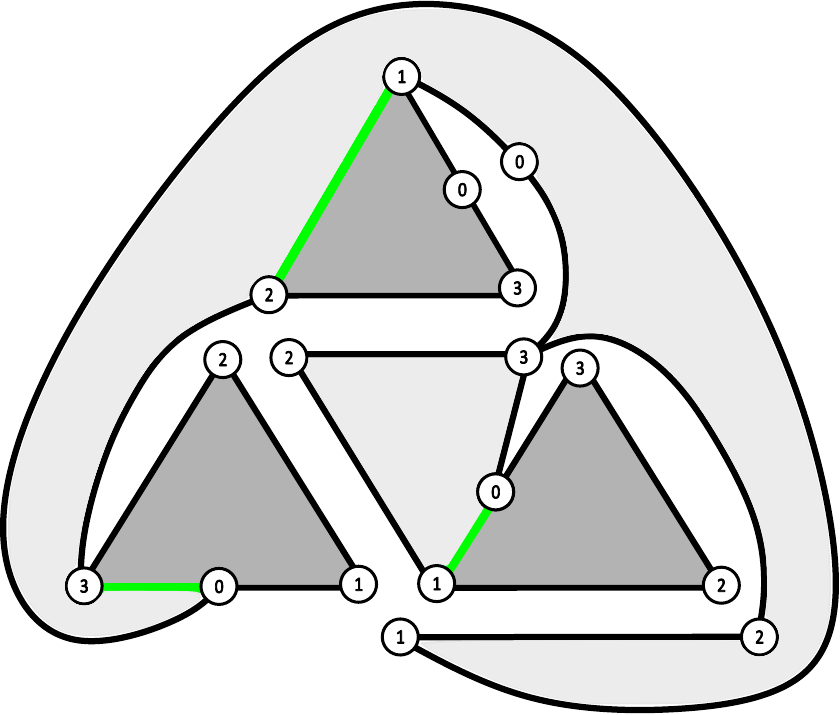}
          \caption{}
        \end{subfigure}
        \begin{subfigure}[b]{0.5\linewidth}
            \centering
            \includegraphics[width=0.9\linewidth]{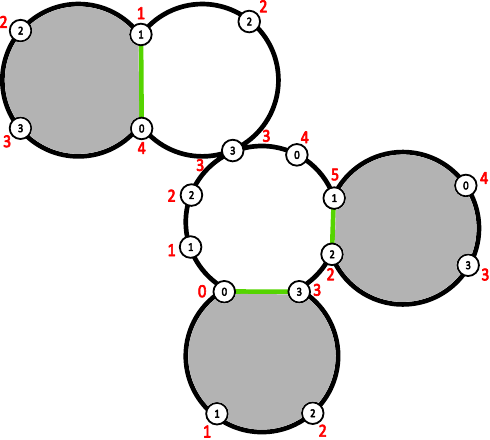}
            \caption{}
        \end{subfigure}
        \caption{(A): The cactus in \cref{constr:galtocac} obtained from the galaxy in \cref{fig:hurgalmarked}. (B): The cactus of \cref{fig:cactusexample} equipped with a canonical corner labelling.}
        \label{fig:galtocac}
    \end{figure}
\end{example}

\begin{definition} \label{def:canonicalcornerlabel}
Given a cactus $C$, a \textit{canonical corner labelling} is a mapping $\delta$ from the set of boundary corners of $C$ into the non-negative integers such that:
\begin{itemize}
    \item the minimum label is $0$, 
    \item for each boundary edge $e = u \rightarrow v$, $\delta(c^{\prime})=\delta(c)+1$, where $c$ is the boundary vertex incident to $e$ at $u$ and $c^{\prime}$ is the boundary vertex incident to $e$ at $v$.
\end{itemize}
\end{definition}

For any galaxy $G$, the corner labelling of $\Gamma(G)$ inherited from the distance labelling on $G$ is canonical by construction. Furthermore, any cactus $C\in C_g(\mu,\nu)$ has a unique canonical corner labelling by \cite[Lemma 5]{duchi2014bijections}.

\begin{definition} \label{def:coherent} \emph{\cite{duchi2014bijections}}
The canonical corner labelling $\delta$ of a cactus $C \in C_g(\mu,\nu)$ is said to be coherent if for each vertex $u \in C$, all boundary corners of $u$ have the same label. In this case the canonical corner labelling yields a vertex labelling called the \textit{coherent canonical labelling} of $C$. We denote by $C_g^c(\mu,\nu)$ the set of cacti of $C_g(\mu,\nu)$ whose canonical corner labelling is coherent.
\end{definition}

\begin{remark}
\label{rem:coherent}
    By \cite[Proposition 4]{duchi2014bijections}, we have $C_0(\mu,\nu)=C^c_0(\mu,\nu)$, i.e. all canonical corner labellings in genus zero are coherent.
\end{remark}

We end this section, with the following definition.
\begin{definition}
A cactus $C$ of $C_g^c(\mu,\nu)$ is said to be proper if the colour of its vertices with canonical label $0$ is $0$. We denote by $C_g^{0c}(\mu,\nu)$ the set of proper cacti. Elements of this set are said to have a proper canonical corner labelling.\\
\end{definition}

\section{Pruned Hurwitz numbers}
\label{sec:prunhur}
In \cite{do2018pruned}, a new variant of Hurwitz numbers was introduced, so-called \textit{pruned Hurwitz numbers}. Their motivation stem from the theory of Chekhov--Eynard--Orantin topological recursion. In this theory, one starts with a spectral curve as input datum and then recursively constructs a sequence of multi-differential forms. Astonishingly, for many enumerative invariants, one may find a spectral curve such that the coefficients of the multi-differentials obtained via topological recursion are exactly the enumerative invariants we started with.\\
For example, for the spectral curve
\begin{equation}
    x(z)=ze^{-z}\quad\mathrm{and}\quad y(z)=z,
\end{equation}
the multi-differentials $\omega_{g,n}$ that are the output of topological recursion satisfy

\begin{equation}
\omega_{g,n}=\sum_{\mu_1,\dots,\mu_n=1}^\infty\frac{{H}_g(\mu)}{b!}\prod_{i=1}^n\mu_ix_i^{\mu_i-1} \mathrm{d}x_1\cdots\mathrm{d}x_n,
\end{equation}
where $b$ is the number of simple branch points of the respective Hurwitz numbers.

This result is the content of the Bouchard-Mariño conjecture \cite{zbMATH05380331} that was proved in \cite{borot2011matrix,eynard2011laplace}.\\
The idea in \cite{do2018pruned} is to expand the differentials $\omega_{g,n}$ in the $z$-coordinates instead of the $x$-coordinates to obtain

\begin{equation}
\omega_{g,n}=\sum_{\mu_1,\dots,\mu_n=1}^\infty\frac{{PH}_g(\mu)}{b!}\prod_{i=1}^n\mu_iz_i^{\mu_i-1} \mathrm{d}z_1\cdots\mathrm{d}z_n.
\end{equation}

The astonishing observation of \textit{loc.cit.} is that the coefficients $PH_g(\mu)$ admit a natural interpretation as enumerative invariants that are now called \textit{pruned single Hurwitz numbers}.\\ 
We begin with their definition.

\begin{definition}
\label{def:pruned}
    Let $g$ be a non-negative integer, $d$ a positive integer and $\mu$ a partition of $d$. We define $PB_g(\mu)\subset B_g(\mu,(1^d))$ to be the subset of branching graphs of type $(g,\mu,1^d)$ with no $1$--valent vertices.\\
    We define the associated \textit{pruned single Hurwitz number} as
    \begin{equation}
        PH_g(\mu)=\sum_{[\Gamma]\in PB_g(\mu,1^d)}\frac{1}{|\mathrm{Aut}(\Gamma)|}.
    \end{equation}
\end{definition}

Next, we rephrase the definition of pruned Hurwitz numbers in terms of Hurwitz galaxies. To begin with, we need the following definition.

\begin{definition}
Let $P$ be some face of a Hurwitz galaxy $G$. We say that $P$ is a bubble if it contains exactly one 4-valent vertex $v$.
\end{definition}

\begin{example}
In \cref{fig:bubble1}, we illustrate a Hurwitz galaxy with a black bubble.
    \begin{figure}
        \centering
        \includegraphics[scale=0.6]{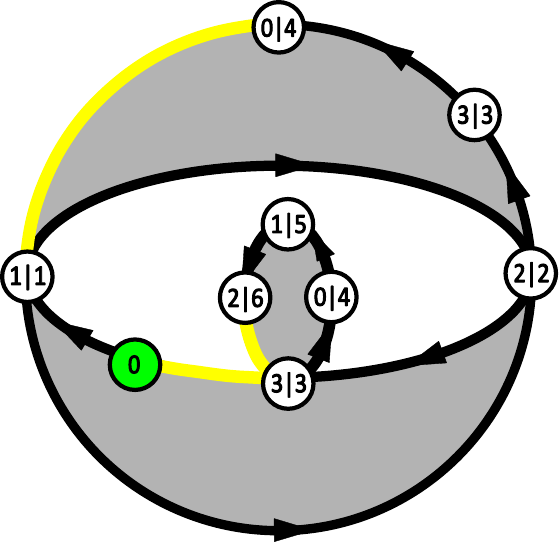}
        \caption{A Hurwitz galaxy of type $(0,(2,1),(1,1,1))$ with a bubble.}
        \label{fig:bubble1}
    \end{figure}
\end{example}

\begin{definition}
Let $g$ be a non-negative integer, $d$ a positive integer and $\mu,\nu$ partitions of $d$. Then, we define $PG_g(\mu)\subset G_g(\mu,1^d)$ to be the subset of Hurwitz galaxies of type $(g,\mu,\nu)$ without black faces that are bubbles.
\end{definition}

Recall from \cref{rem:brgalcorr} that there is a combinatorial correspondence between branching graphs and Hurwitz galaxies. It is easy to see that Hurwitz galaxies in $PG_g(\mu)$ exactly correspond to branching graphs in $PB_g(\mu)$ (\cite[Proposition 3.3]{hahn2020bi}) and thus that

\begin{equation}
        PH_g(\mu)=\sum_{[G]\in PG_g(\mu)}\frac{1}{|\mathrm{Aut}(G)|}.
\end{equation}

Next, we define a natural generalisation of $PH_g(\mu)$, namely pruned double Hurwitz numbers. For this it will be convenient to consider the pruning condition with respect to pre-images of $0$ instead of $\infty$.

\begin{definition}
    Let $g$ be a non-negative integer, $d$ a positive integer, $\mu,\nu$ partitions of $d$. Then, we define $PG_g(\mu,\nu)\subset G_g(\mu,\nu)$ to be subset of Hurwitz galaxies of type $(g\mu,\nu)$ without white faces that are bubbles. We call the elements of $PG_g(\mu,\nu)$ pruned Hurwitz galaxies.
    In particular, we define pruned double Hurwitz numbers as
    \begin{equation}
        PH_g(\mu,\nu)=\sum_{G\in PG_g(\mu,\nu)}\frac{1}{|\mathrm{Aut}(G)|}
    \end{equation}
    For some purposes, it is useful to ignore automorphisms and thus, we also define

\begin{equation}
        \widehat{PH}_g(\mu,\nu)=\sum_{[\Gamma]\in PB_g(\mu,\nu)}1.
    \end{equation} 
\end{definition}

We note that $PH_g(\mu,\nu)=\widehat{PH}_g(\mu,\nu)$ unless $\mu=\nu=(d)$. Moreover, we have $PH_g(\nu)=PH_g(1^d,\nu)$.

Pruned double Hurwitz numbers share many properties with their classical counterparts. Here we collect the ones relevant for our work. Firstly, pruned Hurwitz numbers may be expressed in terms of factorisations in the symmetric group. To formulate this correspondence, we need the following definition.

\begin{definition}
    For a permutation $\sigma \in S_d$, the \textit{support of} $\sigma$, denoted $\mathrm{supp}(\sigma)$, is the set of elements of $[d] = \{1, \ldots, d\}$ that are not fixed by $\sigma$.
\end{definition}

\begin{example}
    Consider the permutations $\sigma_1 = (246)\in S_7$ and $\sigma_2 = (23)\in S_7 $. Then, $$\mathrm{supp}(\sigma_1) = \{ 2,4,6\}, \quad 
    \mathrm{supp}(\sigma_2) = \{ 2, 3\}, \quad \text{ and } \mathrm{supp}(\sigma_1) \cap \mathrm{supp}(\sigma_2) = \{ 2\}.$$
\end{example}

Using this notation we may express pruned double Hurwitz numbers as the following weighted count.

\begin{theorem}[{\cite[Theorem 41]{zbMATH06791415}}]\label{def PDHN}
Let $g\geq 0$ and $d > 0$ be integers, and let $\mu, \nu$ be partitions of $d$. Then, we have
\begin{equation}
     PH_g(\mu, \nu)  = \frac{1}{d!} \cdot 
    \left|  \left\{ \begin{array}{l}
            (\sigma_1, \tau_1, \ldots, \tau_b, \sigma_2) \text{ such that:} \\
            \bullet \ b = 2g-2+\ell(\mu) + \ell(\nu), \\
            \bullet \ \sigma_1, \sigma_2, \tau_i \in S_d, \\
            \bullet \ \mathcal{C} (\sigma_1) = \mu,\ \mathcal{C}(\sigma_2) = \nu, \\ 
            \bullet \ \tau_i \text{ are transpositions }, \\
            \bullet \ \sigma_1 \tau_1 \cdots \tau_b = \sigma_2^{-1} \\
            \bullet \ \text{the subgroup generated by } (\sigma_1, \tau_1, \ldots, \tau_b, \sigma_2)\\
            \ \ \text{ acts transitively on } [d], \\
            \bullet \ \textrm{the cycles of }\sigma_1\textrm{ are labelled by }1,\dots,\ell(\mu),\\\textrm{such that the cycle labelled }i\textrm{ has length }\mu_i,\\
            \bullet \ \textrm{the cycles of }\sigma_2\textrm{ are labelled by }1,\dots,\ell(\nu),\\\textrm{such that the cycle labelled }i\textrm{ has length }\nu_i,\\
            \bullet \ \sum_{j=1}^b|\mathrm{supp}(\mu_i) \cap \mathrm{supp}(\tau_j)| \geq 2, \textrm{ for all } i = 1, \ldots , \ell(\mu).
          
    \end{array}\right\} \right|
\end{equation}    
\end{theorem}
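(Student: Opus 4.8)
The plan is to leverage the standard monodromy dictionary that already underlies \cref{thm-brangr} and its galaxy analogue, and to translate the combinatorial pruning condition (no white bubbles) into the support condition on the transpositions. First I would fix branch points $p_1,\dots,p_b$ and $0,\infty$ on $\mathbb{P}^1$, choose a base point and a standard generating system of loops for the fundamental group of the punctured sphere, and recall the classical Hurwitz correspondence: equivalence classes of Hurwitz covers of type $(g,\mu,\nu)$ are in bijection with tuples $(\sigma_1,\tau_1,\dots,\tau_b,\sigma_2)$ satisfying $\mathcal{C}(\sigma_1)=\mu$, $\mathcal{C}(\sigma_2)=\nu$, the $\tau_i$ transpositions, $\sigma_1\tau_1\cdots\tau_b\sigma_2=\mathrm{id}$ (equivalently $\sigma_1\tau_1\cdots\tau_b=\sigma_2^{-1}$), transitivity, with the cycle-labelling bookkeeping matching the labelling of preimages of $0$ and $\infty$; the $1/d!$ prefactor accounts for the simultaneous conjugation action of $S_d$ corresponding to relabelling the sheets. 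This is the content of the two factorisation theorems stated earlier for classical $H_g(\mu,\nu)$, so the only new ingredient is identifying which tuples correspond to \emph{pruned} galaxies.

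The heart of the argument is therefore the following correspondence, which I would isolate as the key step: a Hurwitz galaxy $G\in G_g(\mu,\nu)$ lies in $PG_g(\mu,\nu)$ (i.e.\ has no white face that is a bubble) if and only if the associated factorisation tuple satisfies $\sum_{j=1}^b|\mathrm{supp}(c_i)\cap\mathrm{supp}(\tau_j)|\ge 2$ for every cycle $c_i$ of $\sigma_1$, where $c_i$ is the $i$-th cycle (of length $\mu_i$). The plan is to recall from the construction sketched in the proof of \cref{thm-brangr} how a white face of the galaxy corresponds to a cycle of $\sigma_1$ (equivalently to a preimage of $0$, carrying ramification $\mu_i$), and how the $4$-valent vertices of the galaxy correspond to the transpositions $\tau_j$. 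A white face is a bubble precisely when it contains exactly one $4$-valent vertex, and I would show that the number of $4$-valent vertices incident to the white face labelled $i$ equals the number of indices $j$ for which $\tau_j$ moves an element of the cycle $c_i$, counted with multiplicity $|\mathrm{supp}(c_i)\cap\mathrm{supp}(\tau_j)|$. Concretely, a transposition $\tau_j$ contributes to the boundary of the face labelled $i$ once for each element of $\mathrm{supp}(c_i)$ that it moves; thus the bubble condition ``exactly one $4$-valent vertex'' is exactly the condition that $\sum_j|\mathrm{supp}(c_i)\cap\mathrm{supp}(\tau_j)|=1$, and excluding bubbles is exactly $\sum_j|\mathrm{supp}(c_i)\cap\mathrm{supp}(\tau_j)|\ge 2$. (Here I would also note that since $c_i$ is a cycle on a white face, the sum is automatically positive, and the case of the full sum being $1$ is the only obstruction to being unpruned.)

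Granting this dictionary, I would then assemble the count: summing $1/|\mathrm{Aut}(G)|$ over pruned galaxies equals $1/d!$ times the number of factorisation tuples satisfying the additional support inequalities, by the same orbit-counting (Burnside/orbit--stabiliser) bookkeeping that turns the weighted galaxy count into a plain tuple count with the $1/d!$ prefactor; the labelling of cycles of $\sigma_1$ and $\sigma_2$ rigidifies the preimages of $0$ and $\infty$ exactly as required, so the automorphism factor is absorbed. The main obstacle I anticipate is making the bubble-to-support translation fully rigorous: one must carefully track, through the pullback of the star/circle graph, how the local picture at each $4$-valent vertex encodes a transposition acting between two sheets, and verify that ``one $4$-valent vertex on the boundary of a white face'' corresponds on the nose to the support-intersection sum equalling $1$ rather than to some coarser incidence count. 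This is where I would invoke the explicit local analysis already recorded in \cref{fig:localcactus} and the branching-graph/galaxy correspondence of \cref{rem:brgalcorr}, reducing the verification to a finite local check at each vertex configuration. Since the statement is quoted from \cite[Theorem 41]{zbMATH06791415}, I would present this as a recollection of that result, citing \emph{loc.\ cit.}\ for the detailed local verification while giving the conceptual argument above.
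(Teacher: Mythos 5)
The paper does not actually prove this statement: it is quoted from \cite[Theorem 41]{zbMATH06791415} (with a correction, see the remark immediately following it), so there is no in-paper proof to compare against. Your route --- the classical monodromy correspondence plus a local translation of the ``no white bubbles'' condition into the support inequality --- is the natural one and is essentially how such a statement is established in the literature.

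There is, however, one genuine gap, and it sits exactly at the point the paper's own remark is warning about. You assert that the bubble condition ``exactly one $4$-valent vertex'' is \emph{on the nose} the condition $\sum_j|\mathrm{supp}(c_i)\cap\mathrm{supp}(\tau_j)|=1$, having just declared that $4$-valent vertices should be ``counted with multiplicity $|\mathrm{supp}(c_i)\cap\mathrm{supp}(\tau_j)|$''. But the paper's definition of a bubble counts \emph{distinct} $4$-valent vertices on the face, whereas your sum counts boundary incidences (corners): a transposition $\tau_j$ with both elements in $\mathrm{supp}(c_i)$ contributes a single $4$-valent vertex that meets the white face $i$ in two corners, so it contributes $1$ to the distinct count but $2$ to your sum. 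These two counts disagree precisely when exactly one $\tau_j$ meets $\mathrm{supp}(c_i)$ and does so in both elements; transitivity then forces $\mathrm{supp}(c_i)=[d]$ and $b=1$, i.e.\ the family $PH_0((\mu_1),(\nu_1,\nu_2))$. This is exactly the discrepancy the paper flags (``the condition there misses loops in branching graphs\dots the two expressions only differ for $PH_0((\mu_1),(\nu_1,\nu_2))$''), and the base case $PH_0((a),(b,c))=1$ in \cref{base case pruned ex} shows that the incidence (multiplicity) count, not the distinct-vertex count, is the one matching the stated theorem. So your proof as written either proves the wrong (uncorrected) statement --- if you use the paper's literal bubble definition --- or silently replaces that definition by a corner-counting one. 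To close the gap you must either (a) argue that the correct notion of ``bubble'' is a white face whose boundary walk meets $4$-valent corners exactly once, and check this against the geometric definition of pruned covers, or (b) prove the equivalence away from the case $(\ell(\mu),b)=(1,1)$ and treat that family separately. The rest of the argument (the $1/d!$ via the conjugation action, the cycle-labelling bookkeeping, and the local identification of corners with elements of $\mathrm{supp}(\tau_j)$) is sound.
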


\begin{remark}
    We note that the last condition differs from the one in \cite{zbMATH06791415}. Indeed, the given interpretation of pruned double Hurwitz numbers in \textit{loc. cit.} is not correct as the condition there misses loops in branching graphs. However, the two expressions only differ for $PH_0((\mu_1),(\nu_1,\nu_2))$. 
\end{remark}

Moreover, pruned double Hurwitz numbers admit polynomial behaviours reminiscent of that of classical Hurwitz numbers. More precisely, let $m,n>0$ integers and recall the hyperplane
\begin{equation}
    \mathcal{H}_{m,n}=\left\{(\mu,\nu)\in\mathbb{N}^m\times\mathbb{N}^n\mid\sum\mu_i=\sum\nu_j\right\}
\end{equation}
and the resonance arrangement inside this hyperplane
\begin{equation}
    \mathcal{R}_{m,n}=\left\{\sum_{i\in I}\mu_i-\sum_{j\in J}\nu_j=0\mid I\subset[m],J\subset[n]\right\}.
\end{equation}

We may now consider the map
\begin{align}
    PH_g\colon\mathcal{H}_{m,n}&\to\mathbb{Q}\\
    (\mu,\nu)&\mapsto PH_g(\mu,\nu).
\end{align}

\begin{theorem}[{\cite[Theorem 31]{zbMATH06791415}}]
    The map $PH_g$ is piecewise polynomial. More precisely, for each chamber $C$ of $\mathcal{R}_{m,n}$, there exists a polynomial $P_g^C$ in $m+n$ variables of degree $4g-3+m+n$, such that for all $(\mu,\nu)\in C$ we have $PH_g(\mu,\nu)=P_g^C(\mu,\nu)$.
\end{theorem}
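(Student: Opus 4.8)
The plan is to prove the piecewise polynomiality of $PH_g$ by exploiting the symmetric-group interpretation of \cref{def PDHN} together with a wall-crossing analysis parallel to the classical case of \cite{goulden2005towards,cavalieri2010tropical,cavalieri2011wall}. The key structural observation is that the chambers of $\mathcal{R}_{m,n}$ are exactly the regions where, for every subset $I\subset[m]$ and $J\subset[n]$, the sign of $\sum_{i\in I}\mu_i-\sum_{j\in J}\nu_j$ is constant. The combinatorial data counted in \cref{def PDHN} — a factorisation $\sigma_1\tau_1\cdots\tau_b=\sigma_2^{-1}$ with the transitivity, labelling and pruning conditions — can be organised so that the number of valid factorisations depends on $(\mu,\nu)$ only through these signs, and within a fixed chamber it is governed by a fixed combinatorial count that one can read off as a polynomial in the entries $\mu_i,\nu_j$.

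First I would fix a chamber $C$ and reduce the count to a sum over the discrete combinatorial structure underlying the factorisations. The natural vehicle is the branching-graph (equivalently, pruned Hurwitz galaxy) model from \cref{sec:graphs}: each element of $PB_g(\mu,\nu)$ is a good graph on a genus-$g$ surface whose combinatorial type (the cyclic structure, the incidences of the $b$ edges, the adjacency of faces) is independent of the precise values of $\mu_i,\nu_j$ so long as $(\mu,\nu)$ stays within one chamber. The perimeters $\mu_i$ and valencies $\nu_j$ enter only as the lengths of the polygons and the multiplicities of how edges are distributed, and these are recorded by a system of linear inequalities in $(\mu,\nu)$ whose solution set, intersected with the integer points of $C$, is counted by an Ehrhart-type quasi-polynomial. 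The degree count $4g-3+m+n$ should emerge from the number of free integer parameters: roughly $b=2g-2+m+n$ edge-weights constrained by the face-perimeter conditions, yielding the stated degree after accounting for the genus.

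The cleanest route, and the one I would actually carry out, is to establish the wall-crossing formula directly and then invoke polynomiality on a single reference chamber. Concretely, I would show that across any wall $\delta=\sum_{i\in I}\mu_i-\sum_{j\in J}\nu_j=0$ the difference of the two candidate counts is itself a (pruned) Hurwitz-number expression in strictly smaller data — this is the pruned analogue of the genus-$0$ formula recalled in the excerpt, and for general $g$ it comes from cutting the branching graph along the wall into two pieces, each contributing a pruned Hurwitz count. Because such a cut-and-reassemble operation respects the pruning condition (no new $1$-valent vertices are created when the edge-weights stay positive), the wall-crossing difference is a polynomial of the asserted degree by induction on $m+n$. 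Piecewise polynomiality then follows because $PH_g$ is polynomial in the interior of one fixed chamber and differs from a polynomial by another polynomial across each wall.

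The main obstacle I anticipate is twofold. The serious technical point is controlling the \emph{pruning constraint} $\sum_{j=1}^b|\mathrm{supp}(\mu_i)\cap\mathrm{supp}(\tau_j)|\ge 2$ under the cut-and-glue operation: ordinary double Hurwitz wall-crossing (as in \cite{cavalieri2011wall}) cuts graphs freely, but here one must verify that the pieces remain pruned and that no degenerate contributions with isolated $1$-valent vertices sneak in or are spuriously removed, which is precisely the subtlety flagged in the Remark after \cref{def PDHN}. The second, more routine, difficulty is bookkeeping the degree: one must check that the linear-algebraic count of edge-weights genuinely produces degree $4g-3+m+n$ rather than a quasi-polynomial of lower degree, which amounts to verifying that the relevant polytope is full-dimensional in each chamber. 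I expect the pruning-stability of the cutting operation to be where most of the real work lies, and I would isolate it as a separate lemma before assembling the induction.
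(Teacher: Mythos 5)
This statement is not proved in the paper at all: it is quoted from \cite[Theorem 31]{zbMATH06791415}, and the present paper only revisits the $g=0$ case tropically in \cref{sec:polytrop}, where it obtains piecewise polynomiality merely with respect to a \emph{refined} resonance arrangement $\tilde{\mathcal{R}}_{m,n}$. The relevant comparison is therefore with the proof in \emph{loc.\ cit.}, which traces the cut--and--join recursion of \cref{thm pruned recursion} back to the explicitly polynomial base cases of \cref{base case pruned ex}, rather than arguing by wall-crossing.

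Your proposal has a genuine gap at its core. The route you say you would actually carry out is to first establish wall-crossing formulae for pruned double Hurwitz numbers and then induct; but the paper states explicitly that such wall-crossing formulae \emph{remain an open problem}, so you are resting the argument on an unproved statement, and the difficulty you yourself flag --- controlling the pruning constraint of \cref{def PDHN} under cutting and regluing --- is precisely the obstruction. Even granting wall-crossings, your induction has no base: you never show that $PH_g$ agrees with a polynomial of degree $4g-3+m+n$ on a single reference chamber, which is the substantive content of the theorem, since wall-crossing only controls differences between chambers. The fallback argument is also insufficient as stated: the claim that the combinatorial type of a branching graph in $PB_g(\mu,\nu)$ is independent of the values of $\mu_i,\nu_j$ within a chamber is false (these graphs have $\nu_i\cdot b$ half-edges at the $i$-th vertex and faces of perimeter $\mu_i$, so the graphs themselves grow with the entries, and one must first pass to a finite set of combinatorial types as the monodromy-graph approach does); an Ehrhart-type count yields a priori a quasi-polynomial, and you do not explain why the period is $1$; and nothing in your argument identifies the chamber structure as exactly that of $\mathcal{R}_{m,n}$ --- the paper's own tropical analysis naturally produces the strictly finer arrangement $\tilde{\mathcal{R}}_{m,n}$, which shows this identification is not automatic.
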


Wall-crossing formulae reminiscent of the classical situation remain an open problem.

We end this section with the computation of the polynomial expression of certain pruned double Hurwitz numbers. The first two cases were already obtained in \cite[Example 36]{zbMATH06791415}.

 \begin{example}\label{base case pruned ex}
     \begin{itemize}
            \item 
            For $g=0$, $\ell(\mu) = 1$, and $\ell(\nu) = 2$, we have $$ PH_0((a), (b, c)) = 1.$$

            \item 
            For $g=0$, $\ell(\mu) = 2 = \ell(\nu)$, we have $$PH_0 ((a, b), (c,d) ) = 2 \cdot \min \{ a, b, c, d\}.$$

            \item Generalising Example 36 in \cite{zbMATH06791415} we consider the case $\ell(\mu)= n \geq 3$, $\ell(\nu) = 2$. That is, $\mu= (\mu_1, \ldots, \mu_n)$ and $\nu = (\nu_1, \nu_2)$. Then, 
            $$PH_0(\mu, \nu)  = \frac{1}{n} \cdot 
    \left|  \left\{ \begin{array}{l}
            (\alpha_1, \ldots, \alpha_n, i_1, \ldots, i_n, j_1, \ldots, j_n) \text{ such that:} \\
            \bullet \ (i_1, \ldots, i_n) , (j_1, \ldots, j_n) \text{ are ordered} \\
            \bullet \ \text{tuples with entries in } [n], \\
            \bullet \ \sum_{i=1}^n \alpha_i = \nu_1, \\
            \bullet \ 0 \leq \alpha_i \leq \mu_i, \\
            \bullet \ \text{If } j_k < j_{k-1}: \ \alpha_i \geq 1, \\ 
            \bullet \ \text{If } j_k > j_{k-1}: \ \alpha_i \leq \mu_{i-1}.
          
    \end{array}\right\} \right|.$$
    We see immediately that $PH_0(\mu,\nu)$ is polynomial in the resonance arrangement.
   

            \item We have $PH_0((a), (a)) = 0 $ and $PH_0((a, b), (c)) = 0 $.
            Furthermore, for all $PH_g(\mu, \nu)$ with $(g, \ell(\nu) ) = (0, 1)$ we find that
            $$ PH_g((\mu_1, \ldots, \mu_m), (\nu_1) ) = 0 .$$

     \end{itemize}
 \end{example}

\section{Pruned Hurwitz mobiles}
\label{sec:bij}
In this section, we express pruned single Hurwitz numbers in terms of pruned Hurwitz mobiles. To begin with, in \cref{sec:hurdyckbij} we recall the bijection  between marked Hurwitz galaxies of genus $0$ and shift equivalence classes of mobiles given in \cite{duchi2014bijections} and rephrase it in terms of of single Hurwitz Dyck paths. We then derive an expression of pruned single Hurwitz numbers in terms of a distinguished subset of single Hurwitz Dyck paths in \cref{sec:prunhurdyck}. Finally, we employ this expression to prove a correspondence between pruned Hurwitz galaxies and a distinguished subset of Hurwitz mobiles in \cref{sec:prunhurmob}, giving an interpretation of pruned single Hurwitz numbers in terms of what we call \textit{pruned Hurwitz mobiles}.

\subsection{Single Hurwitz numbers and Dyck paths}
\label{sec:hurdyckbij}
We construct a bijective map
\begin{equation}
    \Phi\colon \mathcal{G}_0(\mu,\nu)\to\mathcal{M}(\mu,\nu).
\end{equation}
between marked Hurwitz galaxies and shift equivalence classes of mobiles. Here, we focus on the case of $\nu=(1^d)$ and rephrase the bijection in terms of single Hurwitz Dyck paths, introduced in \cref{def:singlehurwitzdyck}.

\begin{construction} 
\label{constr:galtomob}
Let $G\in\mathcal{G}_0(\mu,\nu)$. The steps of the construction of $\Phi(G)$, as described in \cite[Section 2]{duchi2014bijections} are as follows:
\begin{enumerate}
    \item Polygons: Place in each face of degree $i$ of the galaxy $G$ an oriented $i$-gon -- oriented clockwise in the white faces and counter-clockwise in the black faces. These are the nodes and arcs of $\Phi(G)$, i.e the white and black polygons of the mobile.
    \item Construction lines: Join with dashed lines the $i$ nodes in each face $F$ of degree $i$ to the centres of the $i$ edges given by $b\rightarrow 0$ on the boundary of the face. This divides the face of degree $i$ into $i+1$ sub-regions: The interior of the white or black polygon, as well as $i$ sub-regions that each have on their boundary a path $0 \rightarrow 1 \rightarrow \dots \rightarrow b$ along the boundary of $F$, as well as dotted lines and an arc joining two adjacent nodes of the $i-$gon.
    \item Positive weight edges: For each non-geodesic edge $e = u \rightarrow v$ with weight $w(e)$, let $F_{\circ}$ and $F_{\bullet}$ be the white and black faces incident to $e$ respectively, and let $x$ (resp. $y$) denote the origin of the unique arc of the polygon incident to the same sub-region of $F_{\circ}$ (resp. $F_{\bullet}$) as the vertex $v$. Create an edge with label $c(v)$ and weight $w(e)$ between the nodes $x$ and $y$ (constructed through the edge $e$).
    \item Zero weight edges: For each vertex $v$ of $G$ with colour $c(v)$ that has two incoming geodesic edges, let $F_{\circ}$ and $F_{\circ}'$ denote the two incident white faces, and let $y$ (resp. $y^{\prime}$) denote the origin of the unique arc of the respective polygon incident to the same sub-region of $F_{\circ}$ (resp. $F_{\circ}'$) as $v$. Create an edge with label $c(v)$ and weight zero between $y$ and $y^{\prime}$ (constructed to pass through the vertex $v$).
    \item Remove the dashed lines.
\end{enumerate}
We call the resulting graph $\Phi(G)$.
\end{construction}

\begin{example}
    We illustrate \cref{constr:galtomob} in \cref{fig:galaxytomobile} for the marked galaxy of \cref{fig:hurgalmarked}. We obtain as a result the Hurwitz mobile in \cref{fig:hurmobile}.

    \begin{figure}
  \centering
  \includegraphics[width=0.6\linewidth]{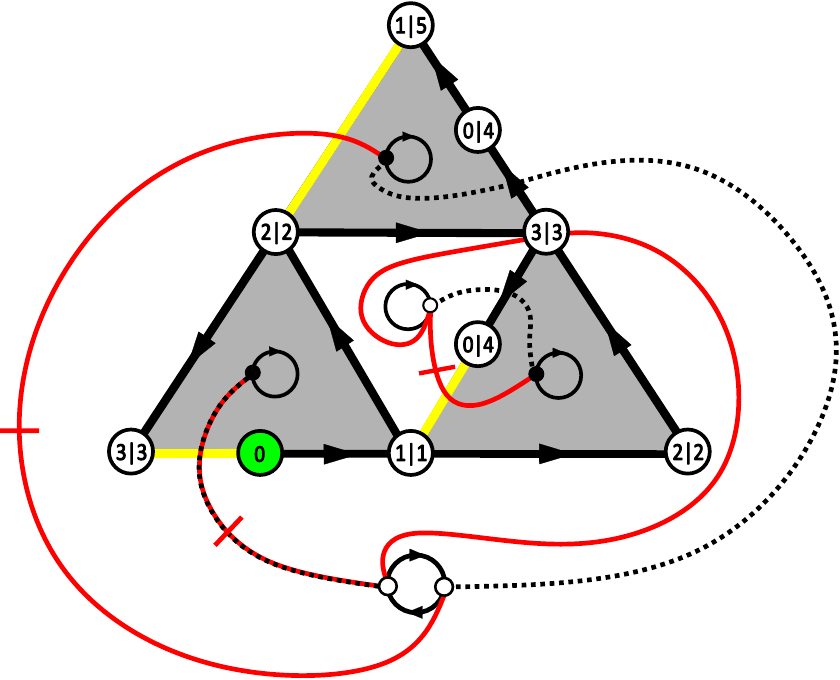}
  \caption{The construction of an edge-labelled Hurwitz mobile from the galaxy $G$ of type $(0,(2,1),(1,1,1))$ in \cref{fig:hurgalmarked}. The mobile $\Phi(G)$ that obtained from this construction (edges shown in red) is the mobile of type $(0,(2,1),(1,1,1))$ of \cref{fig:hurmobile}.}\label{fig:galaxytomobile}
\end{figure}
\end{example}

\begin{theorem}[{\cite[Theorem 1]{duchi2014bijections}}]
\label{thm-schaeffer}
    Let $\mu,\nu$ partitons of $d$. Then
    \begin{equation}
        \Phi\colon\mathcal{G}_0(\mu,\nu)\to\mathcal{M}(\mu,\nu)
    \end{equation}
    is bijective.
\end{theorem}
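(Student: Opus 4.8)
The plan is to prove bijectivity not by analysing $\Phi$ directly, but by factoring it through the intermediate objects introduced above (cacti) and exhibiting an explicit inverse at each stage. Concretely, I would split $\Phi$ as a composite of two bijections
$$\mathcal{G}_0(\mu,\nu)\xrightarrow{\ \Gamma\ }C_0^{0c}(\mu,\nu)\xrightarrow{\ \Xi\ }\mathcal{M}(\mu,\nu),$$
where $\Gamma$ is the galaxy-to-cactus map of \cref{constr:galtocac} and $\Xi$ reads the mobile off a proper cactus using the polygon-and-edge steps of \cref{constr:galtomob}. Checking that those steps depend on $G$ only through $\Gamma(G)$ — the positive-weight edges arising from the internal (formerly non-geodesic) edges of the cactus and the zero-weight edges from the vertices incident to two white polygons — shows that $\Phi=\Xi\circ\Gamma$, so it suffices to prove each arrow is a bijection.

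For the first arrow I would argue as follows. Since $G$ carries a marked vertex of colour $0$ and distance $0$, the distance labelling induces the canonical corner labelling of $\Gamma(G)$ (\cref{def:canonicalcornerlabel}), whose minimal label $0$ occurs only at colour-$0$ vertices; hence $\Gamma(G)$ is proper and $\Gamma$ indeed lands in $C_0^{0c}(\mu,\nu)$. To invert $\Gamma$ I would reconstruct the galaxy from a proper cactus $C$ by re-gluing its boundary: because $C$ has genus $0$ and a single boundary component, its canonical corner labelling (coherent by \cref{rem:coherent}) pairs each white boundary edge with the unique black boundary edge carrying the matching colours, and identifying these pairs glues the boundary disk back, recovers the spanning tree $\Theta(G)$, and produces a sphere carrying a galaxy together with a distinguished colour-$0$ vertex. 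Verifying that this gluing is two-sided inverse to $\Gamma$, and that the result satisfies the galaxy axioms with the correct profiles $(\mu,\nu)$, gives the first bijection.

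For the second arrow, a proper cactus already displays the white and black polygons of the mobile, with the required lengths $\mu_i$ and $\nu_j$, while the internal edges and the doubly-white vertices supply precisely the positive- and zero-weight edges, the edge labels being the vertex colours. The condition that the weights incident to any degree-$i$ polygon sum to $i$ — which the cactus inherits from the galaxy via the lemma on face weights — is exactly the mobile axiom on $i$-gons, so $\Xi(C)$ is a genuine mobile, and its edge labelling is determined up to a global shift of the canonical labelling. Since properness singles out one representative of each shift orbit (the $b+1$ relabellings of a genus-$0$ cactus match the $b+1$ mobiles of a shift class, exactly one of which is proper), $\Xi$ descends to a bijection onto $\mathcal{M}(\mu,\nu)$, with inverse assigning to a mobile the proper canonical labelling of its underlying cactus.

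I expect the main obstacle to be the inverse of $\Gamma$, namely the boundary re-gluing that reconstructs a galaxy from a proper cactus. One must show that the coherent canonical labelling determines a unique admissible identification of boundary edges, that this identification closes the boundary into a genus-$0$ surface (where the planarity and the tree structure of $\Theta(G)$ are indispensable), and that no information is lost or created, so that $\Gamma$ and the gluing are mutually inverse. The remaining verifications — that $\Xi(C)$ obeys every mobile axiom and that properness selects exactly one representative per shift class — I expect to be more routine, relying on the degree/weight bookkeeping already assembled and on the relation $\sigma^{b+1}=\mathrm{id}$.
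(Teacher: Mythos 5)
Your proposal follows essentially the same route as the paper (and the underlying reference): factor $\Phi$ through cacti via the galaxy-to-cactus map of \cref{constr:galtocac}, invert that map by re-gluing the boundary along the coherent canonical corner labelling, and then read the mobile off the cactus, with the $b+1$ recolourings of a cactus accounting for the shift orbit of the mobile. The paper's only additional twist is to re-encode the cactus stage in the language of single Hurwitz Dyck paths (with the boundary re-gluing captured by gluing lines as in \cref{lem:gluehor}), which your plan does not need for this theorem.
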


Since the proof of \cref{thm-schaeffer} is constructive, it will play an important role in our proof of \cref{thm:classhurmob}. Thus, we outline the key steps and refer to \cite[Section 4]{duchi2014bijections} for a more detailed account . Rather than proving explicitly that $\Phi$ is bijective, the authors of \cite{duchi2014bijections} split $\Phi$ into two steps, then prove that each is bijective. We translate parts of the proof into the language of Dyck paths.

As a first step, we consider the map $\Gamma$ associating a cactus to a marked Hurwitz galaxy. It was proved in \cite[Corollary 8]{duchi2014bijections} that $\Gamma$ is a bijection. As a first step we focus on the case $(0,\mu,(1^d))$ and rephrase the bijection in terms of single Hurwitz Dyck paths that we introduced in \cref{def:singlehurwitzdyck}.

\begin{construction} \label{constr:dyckfromcactus}
    Given some cactus $C \in C_0(\mu,1^d)$, we construct a lattice path as follows:
    \begin{enumerate}
        \item We begin at $(0,0)$ and construct the lattice path and move in a counterclockwise direction from the marked vertex along the boundary of $C$.
        \item For each black boundary edge, we insert an up-step and for each white boundary edge, we insert a down-step.
        \item For any vertex in $C$ incident to two white faces, we indicate the two instances it appears in the lattice path as a marked pair as in \cref{def:tdyckpath}.
        \item When crossing a vertex from a white to a black boundary edge, distinguish the corresponding lattice point.
        \item As $C$ only has one boundary forming a cyclic path, we end again at the marked vertex, which ends the construction. 
    \end{enumerate}
    We denote the constructed lattice path by $D(C)$.
\end{construction}

\begin{remark}
\label{rem:cornerlabel}
    We note that by construction, for a given vertex in a cactus $C$, the $y$-coordinate of corresponding vertex in $D(C)$ agrees with its canonical corner label.
\end{remark}

The constructed lattice paths lie in $D(\mu)$. More precisely, we have the following result.

 \begin{proposition}
      Given some cactus $C \in C_0(\mu,1^d)$, the lattice path $D(C)$ is a single Hurwitz Dyck path of type $\mu$.\\
 \end{proposition}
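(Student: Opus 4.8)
The plan is to verify directly that the lattice path $D(C)$ satisfies each of the six defining conditions of a single Hurwitz Dyck path in \cref{def:singlehurwitzdyck}, using the structural properties of the cactus $C \in C_0(\mu,1^d)$ and the correspondence between $y$-coordinates and canonical corner labels recorded in \cref{rem:cornerlabel}. The overall strategy is bookkeeping: I would translate each axiom of a cactus (the face colour condition, the edge structure, the vertex colour condition, and the Hurwitz condition) into the corresponding feature of the constructed path. Since $\nu = (1^d)$, the black faces all have degree $1$, which will make many of the counts concrete.

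First I would establish that $D(C)$ is a genuine Dyck path of the correct length. Each black boundary edge contributes an up-step and each white boundary edge a down-step; since every geodesic edge of the original galaxy produces exactly one white and one black boundary edge (as described just before \cref{constr:dyckfromcactus}), the numbers of up- and down-steps agree. Counting the total number of boundary edges from the type $(0,\mu,(1^d))$ data should yield length $2bd$ where $b = \ell(\mu)+d-2$. The weakly-positive property (staying above the $x$-axis) should follow from the canonical corner labelling: by \cref{rem:cornerlabel} the $y$-coordinate equals the corner label, and by \cref{def:canonicalcornerlabel} the minimum label is $0$, so the path never dips below the axis. Primitivity — exactly one return step — should come from the fact that $C$ has a single boundary traversed as one cyclic path, returning to the marked vertex only at the very end. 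I would then identify the $\ell(\mu)$-marked structure: vertices incident to two white faces produce the marked pairs, and the outermost pair $((0,0),(2bd,0))$ is marked since the marked vertex bounds the unique outer region.

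Next I would verify the $b$-Dyck path condition, that maximal increases have length divisible by $b$, together with the distinguished-vertex labelling. Here the vertex colour condition is the key input: along each boundary edge the colour increments by $1$ modulo $b+1$, so a maximal run of up-steps (a maximal sequence of black boundary edges) corresponds to traversing colours cyclically, and the Hurwitz condition (there are $d-1$ active vertices of each colour) should force the run lengths into multiples of $b$. The distinguished vertices, arising in step (4) of \cref{constr:dyckfromcactus} when crossing from a white to a black boundary edge, correspond exactly to the $b$-Dyck path's distinguished vertices (up-count divisible by $b$, followed by an up-step). Conditions (2)--(4) and (6) of \cref{def:singlehurwitzdyck} — the sizes of the sets of $y$-coordinates of marked pairs and distinguished vertices modulo $b+1$, their disjointness, and the degree of the pair labelled $i$ being $\mu_i$ — I would read off from the coherent canonical labelling (\cref{rem:coherent}) and the face colour condition, translating the degree $\mu_i$ of the white face labelled $i$ into the degree of the corresponding marked pair via \cref{def:tdyckpath}.

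The remaining condition (5), the non-crossing property of the marked pairs, is where I expect the main obstacle to lie. The intuition is clear — because $C_0(\mu,1^d)$ consists of cacti with a tree-like (genus $0$) polygon arrangement and a single boundary, nested white faces must produce nested intervals rather than crossing ones — but making this rigorous requires care. I would argue that if two marked pairs crossed, the boundary walk would have to enter and exit a white region in an interleaved pattern incompatible with the planarity forced by genus $0$; concretely, a crossing would create a cycle in $\Theta(G)$ or violate the disk-decomposition of the surface, contradicting that $\Theta(G)$ is a tree (the property recorded after the definition of $\Theta$). Carefully formalising this topological nesting argument — ideally by inducting on the tree structure of the cactus or by directly analysing how the boundary traversal visits the sub-regions created by the construction lines — will be the technical heart of the proof, whereas the other five conditions reduce to direct counting.
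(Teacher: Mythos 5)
Your plan matches the paper's proof essentially step for step: a condition-by-condition verification using \cref{rem:cornerlabel} for non-negativity, coherence of the canonical corner labelling for the equal heights and conditions (2)--(4), the face/degree data for condition (6), and the single cyclic boundary of a genus-zero cactus for primitivity and the non-crossing condition (5), which the paper likewise treats as following ``by construction'' rather than with the detailed nesting argument you anticipate needing. The only substantive difference is minor: for the $b$-Dyck condition the paper argues directly that each black face (of degree $1$ since $\nu=(1^d)$) carries exactly $b$ black boundary edges and precisely one internal edge, rather than routing through the vertex colour and Hurwitz conditions as you propose.
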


\begin{proof}
We first show that for a given cactus $C$, the lattice paths $D(C)$ lies in $D(\mu)$. To show that $D(C)$ is a Dyck path, we need to prove that it does not have vertices with negative $y$-coordinate. This follows from \cref{rem:cornerlabel}. The fact that it is a $b$-Dyck path follows from the fact that each black face in $C$ possesses exactly $b$ black boundary edges and precisely one internal edge, since the sum of weights of edges of a face sum up to its degree in a galaxy. The Dyck path is $\ell(\mu)$ marked, since we have $\ell(\mu)+d-1$ colours in a galaxy (and thus in a cactus) of type $(0,\mu,(1^d))$. These give rise to $\ell(\mu)+d-2$ many $4-$valent vertices and the unique marked vertex. Each black face has a unique internal edge in the cactus corresponding to a non-geodesic edge in the galaxy. As we have $d$ black faces, we have $\ell(\mu)-1$ many $4$-valent vertices in a galaxy without incoming non-geodesic edges. These correspond to the vertices in $C$ incident to two white faces. Thus, we obtain $\ell(\mu)-1$ pairs of markings. In addition, we have marked the begining and end of the Dyck path. The fact that each element of the pair has the same $y$-coordiante follows from the fact that the canonical corner labelling is coherent. In particular, this implies conditions (1) and (2) of \cref{def:singlehurwitzdyck}. By the same consideration, we obtain conditions (3) and (4). Conditions (5) and (6) follow by construction and the fact that the boundary of a cactus forms a single cyclic path.
\end{proof}

Next, we consider the opposite direction.

\begin{construction}
\label{constr:dyckcacti}
    Let $D \in D(\mu)$ be a single Hurwitz Dyck path. We construct a cactus from $D$.
    \begin{enumerate}
        \item We fix $d$ black polygons of degree $1$ and $\ell(\mu)$ white polygons of degrees $\mu_i$, labelled $1,\dots,d$ and $1,\dots,\ell(\mu)$ respectively. The white polygon labelled $i$ consists of $(b+1)\mu_i$ vertices coloured clockwise $0,\dots,b,0,\dots$. Similarly, the black polygon labelled $j$ consists of $b+1$ vertices coloured counterclockwise $0\dots,b$.
        \item Let $i$ be the label of $((0,0),(2bd,0))$ and consider the white polygon labelled $i$.
        \item We choose a vertex $v$ of colour $0$ in $F$ to be the marked vertex.
        \item We consider the first sequence of $b$ up-steps, connecting $(0,0)$ to $(b,b)$, and consider the corresponding distinguished vertex labelled $j$. We glue the edge connecting a vertex of colour $b$ to a vertex of colour $0$ of the black polygon labelled $j$ to the edge $w\to v$ in $F$ connecting a vertex of colour $b$ to $0$ as well. Furthermore, we consider the glued edge to be an internal edge.
        \item Let the next distinguished/marked vertex of $D$ occur after $k$ down-steps ($0 \leq k \leq b$). We have two possible cases:
        \begin{enumerate}
            \item this vertex is a marked vertex labelled $\tilde{i}$. In this case, move in a counterclockwise direction along $k$ edges of the polygon $F$, starting at the vertex $w$ and ending at some vertex $\tilde{w}$. Glue at this vertex a white face of degree $\mu_{\tilde{i}}$ (so that the colour of the glued vertices coincide). 
            \item this vertex is a distinguished vertex labelled $\tilde{j}$. As in step 4, we attach the black polygon labelled $j$ to the white face. The gluing is performed to the edge of the white face, that is $k$ steps in clockwise direction from the previous internal edge with matching colours.
        \end{enumerate}
        \item We now iteratively repeat step 5 by traversing along white faces for down-steps, gluing in white faces (and traversing along them) for marked vertices and gluing in black faces for distinguished vertices. Note, that after meeting the second vertex of a marked pair, we then traverse along the already constructed corresponding white face.
    \end{enumerate}
\end{construction}

\begin{example}
In \cref{fig:dycktocactus}, we illustrate \cref{constr:dyckcacti} for the Dyck path in \cref{constr:dyckcacti}. We see that the first marked pair $((0,0),(18,0))$ has degree $2$. Thus, we start with a white polygon of degree $2$ with a fixed marked vertex.\\
Traversing along the Dyck path, we first meet a distinguished vertex at height $0$ which corresponds to gluing in the black polygon in the second step. We encounter the next distinguished vertex at height $2$. Thus, this corresponds to the gluing of the black polygon in the third step. Traversing further, we encounter the first vertex of a marked pair at height $3$, which gives rise to the gluing of a white face in the fourth step. The third distinguished vertex at height $1$ finally yields the gluing in the last step. Traversing further along the Dyck path, confirms that this is the entire cactus and we end up again at the marked vertex.


 \begin{figure}
  \centering
  \includegraphics[width=0.95\linewidth]{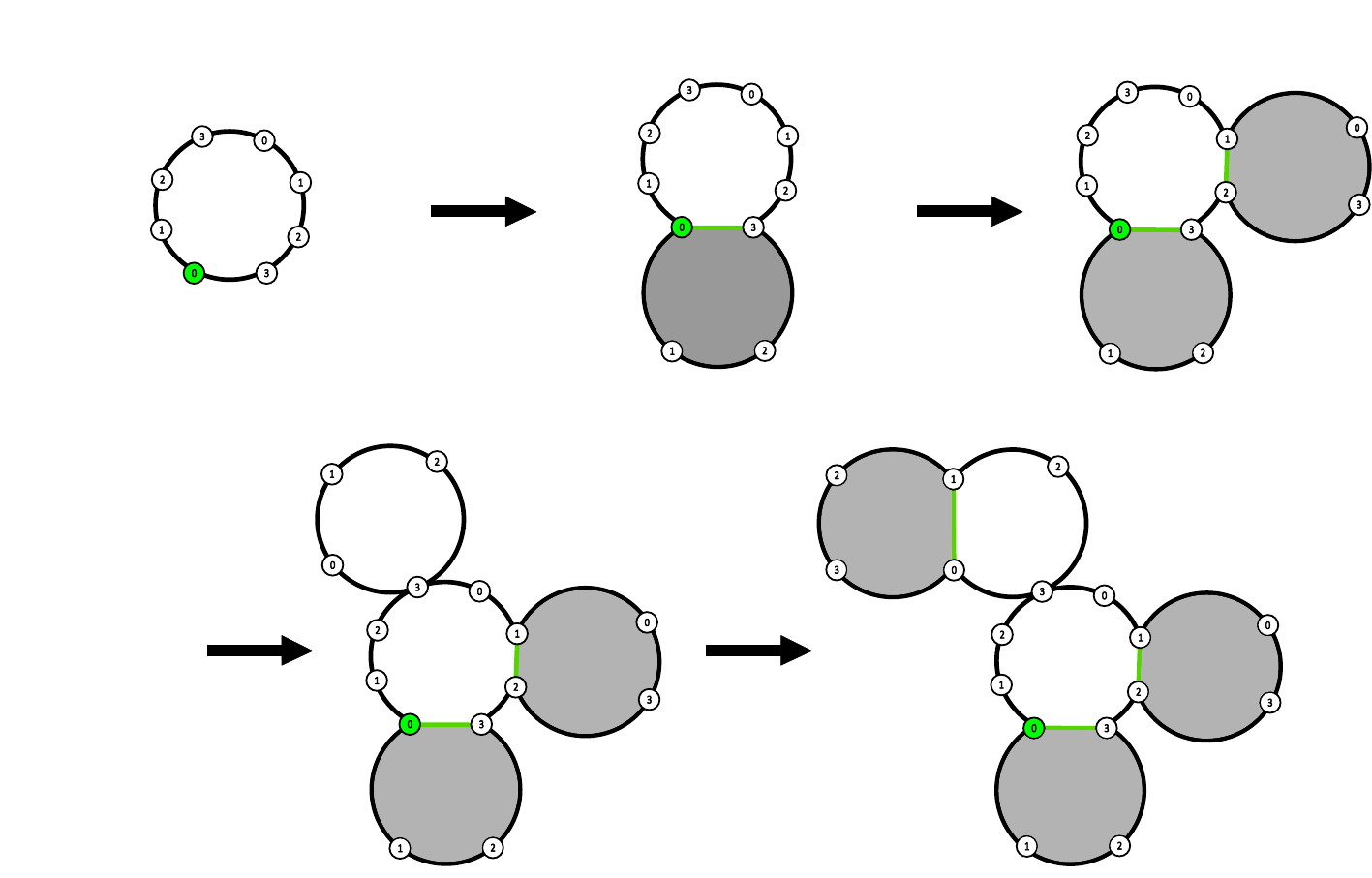}
    \caption{We build a cactus iteratively for the Dyck path in \cref{fig:contourfunctionexample}}\label{fig:dycktocactus}
\end{figure}
\end{example}

The following proposition follows by construction.

 \begin{proposition}
     Given a single Hurwitz Dyck path $D\in D(\mu)$, the cactus $\mathcal{C}(D)$ lies in $C_0(\mu,(1^d))$. In particular, the map
      \begin{align}
          D\colon C_0(\mu,1^d)&\to D(\mu)\\
          C&\mapsto D(C)
      \end{align}
      is a bijection.
 \end{proposition}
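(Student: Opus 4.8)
The plan is to establish the two assertions of the proposition in sequence: first that $\mathcal{C}(D) \in C_0(\mu,(1^d))$ for every single Hurwitz Dyck path $D$, and then that the maps $D$ and $\mathcal{C}$ are mutually inverse, which gives the bijection. Since the preceding proposition already shows that $C \mapsto D(C)$ maps $C_0(\mu,(1^d))$ into $D(\mu)$, the cleanest route is to verify that $\mathcal{C}\colon D(\mu) \to C_0(\mu,(1^d))$ and $D\colon C_0(\mu,(1^d)) \to D(\mu)$ are inverse to one another; bijectivity of $D$ then follows formally.

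First I would verify that $\mathcal{C}(D)$ genuinely lands in $C_0(\mu,(1^d))$ by checking the four conditions of \cref{def:cactus} against the construction. The face-colour condition is immediate from \cref{constr:dyckcacti}(1), since we glue in exactly $\ell(\mu)$ white polygons of the prescribed degrees $\mu_i$ (the degrees of the marked pairs, by condition (6) of \cref{def:singlehurwitzdyck}) and $d$ black polygons of degree $1$. The edge-type condition follows because internal edges arise precisely from the gluings at distinguished vertices (step 4 and 5(b)), while the boundary traversal produces the white and black boundary edges with the correct face on the correct side. The vertex-colour condition $c(v)=c(u)+1 \bmod b+1$ along boundary edges is where the $b$-Dyck structure does the work: each up-step and down-step advances the canonical corner label by one, matching the colouring of the polygons clockwise/counterclockwise as prescribed, and \cref{rem:cornerlabel} identifies the $y$-coordinate with the canonical corner label. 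Finally, the Hurwitz condition (exactly $d-1$ active vertices of each colour) must be extracted from conditions (2)--(4) of \cref{def:singlehurwitzdyck}, which control the sizes modulo $b+1$ of the sets of marked-pair and distinguished $y$-coordinates; the disjointness condition (4) ensures that the colours contributed by gluings do not collide.

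Next I would show $D \circ \mathcal{C} = \mathrm{id}_{D(\mu)}$ and $\mathcal{C}\circ D = \mathrm{id}_{C_0(\mu,(1^d))}$. For $D\circ\mathcal{C}=\mathrm{id}$, I would traverse the boundary of $\mathcal{C}(D)$ counterclockwise from the marked vertex and observe, step by step, that reading off up-steps at black boundary edges and down-steps at white boundary edges reconstructs exactly the step sequence of $D$: each white polygon of degree $\mu_i$ glued in during \cref{constr:dyckcacti} contributes exactly the block of steps that originally produced the corresponding marked pair, and each black polygon contributes the $b$ up-steps and one internal edge at its distinguished vertex. The non-crossing condition (5) of \cref{def:singlehurwitzdyck} is precisely what guarantees that the tree-like nesting of the gluings matches the nesting of the marked intervals, so no ambiguity arises in the order of traversal. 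For the reverse composition $\mathcal{C}\circ D = \mathrm{id}$, I would argue that the data recorded by \cref{constr:dyckfromcactus} — the cyclic sequence of boundary edges, the marked pairs at bivalent-in-two-white-faces vertices, and the distinguished points at white-to-black transitions — determines the cactus uniquely up to isomorphism, since a genus-$0$ cactus is a tree of polygons and the boundary word together with the gluing instructions pins down how these polygons are attached. The coherence of the canonical corner labelling (\cref{rem:coherent}) is essential here, since it guarantees the $y$-coordinate is a well-defined vertex invariant rather than merely a corner invariant, so that the reconstruction of colours is consistent.

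The main obstacle I anticipate is the reverse composition $\mathcal{C}\circ D = \mathrm{id}$, specifically the bookkeeping that a genus-$0$ cactus is faithfully encoded by the boundary word of \cref{constr:dyckfromcactus}. Because the boundary of a genus-$0$ cactus is a single closed walk that visits each polygon and traverses every non-boundary (internal) edge twice while crossing each boundary edge once, one must argue that the planar, tree-like arrangement of the polygons is recoverable from the linear sequence of steps together with the matched marked pairs and distinguished points — this is exactly the content of the non-crossing condition, which translates the planarity of the embedding into the nesting structure of the intervals $[v,w]$. I expect this to reduce, after setting up the correspondence carefully, to an induction on the number of polygons (equivalently on $\ell(\mu)+d$): peeling off a leaf polygon of the cactus corresponds to removing an innermost marked pair or an innermost distinguished block from the Dyck path, and the inductive hypothesis then reconstructs the remainder. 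Once this induction is in place, bijectivity of $D$ is immediate since we have exhibited a two-sided inverse.
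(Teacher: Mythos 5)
Your proposal is correct and takes essentially the same route as the paper: the paper disposes of this proposition with the single remark that it ``follows by construction,'' i.e.\ that \cref{constr:dyckfromcactus} and \cref{constr:dyckcacti} are designed to be mutually inverse and that the conditions of \cref{def:singlehurwitzdyck} match the cactus axioms of \cref{def:cactus} one-for-one. Your write-up simply supplies the details (verification of the four cactus conditions, the two compositions being the identity, and the role of the non-crossing condition in recovering the tree-of-polygons structure) that the paper leaves implicit, and your anticipated induction on the number of polygons is a sound way to close the one step the paper does not even mention.
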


\begin{remark}
 We note that traversing along the Dyck path, corresponds to traversing along the boundary of the cactus. The vertices of the Dyck path that are identified in the cactus are exactly those that are connected by a horizontal line that lies weakly below the Dyck path.
\end{remark}

Next, we describe how to construct Hurwitz mobiles from cacti. Recall that given a cactus $C\in C_0(\mu,\nu)$, its internal edges correspond to non-geodesic edges in its corresponding Hurwitz galaxy and the vertices incident to two white faces correspond to the vertices in the galaxy that split. Thus, we can apply \cref{constr:galtomob} again to obtain a Hurwitz mobile $\Pi(C)$. The following theorem was proved in \cite[Proposition 7]{duchi2014bijections}.

\begin{theorem}
    The map
    \begin{align}
        \Pi\colon C_0(\mu,(1^d))&\to\mathcal{M}(\mu,(1^d))\\
        C&\mapsto [\Pi(C)]
    \end{align}
    is a bijection.
\end{theorem}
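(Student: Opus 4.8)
The statement to prove is that the map $\Pi\colon C_0(\mu,(1^d))\to\mathcal{M}(\mu,(1^d))$, sending a cactus $C$ to the shift-equivalence class of the mobile $\Pi(C)$ obtained by applying \cref{constr:galtomob}, is a bijection. The plan is to exploit the diagram of maps already established in the excerpt and reduce the claim to bijections we may assume. Specifically, by \cref{thm-schaeffer} the composite $\Phi\colon\mathcal{G}_0(\mu,(1^d))\to\mathcal{M}(\mu,(1^d))$ is a bijection, and by the stated \cite[Corollary 8]{duchi2014bijections} the map $\Gamma\colon\mathcal{G}_0(\mu,(1^d))\to C_0(\mu,(1^d))$ associating a cactus to a marked galaxy is a bijection. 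The key structural observation, already flagged in the excerpt, is that $\Phi$ factors as $\Phi=\Pi\circ\Gamma$: the mobile-building rules of \cref{constr:galtomob} depend on $G$ only through data preserved by $\Gamma$, namely the non-geodesic (internal) edges and the vertices incident to two white faces (the splitting vertices). I would make this factorisation precise first, since once $\Phi=\Pi\circ\Gamma$ with $\Phi$ and $\Gamma$ bijective, $\Pi=\Phi\circ\Gamma^{-1}$ is automatically a bijection.

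\textbf{Steps.} First I would verify the factorisation $\Phi=\Pi\circ\Gamma$ at the level of the combinatorial recipes. For a marked galaxy $G$, \cref{constr:galtomob} places an $i$-gon in each face, creates a positive-weight edge for each non-geodesic edge $e=u\to v$ with label $c(v)$ and weight $w(e)$, and a zero-weight edge for each vertex with two incoming geodesic edges. By \cref{constr:galtocac}, the cactus $\Gamma(G)$ retains exactly these faces, records each non-geodesic edge as an internal edge, and records each two-white-face vertex as a vertex incident to two white polygons; moreover by the remark following \cref{constr:galtocac} the corner labelling of $\Gamma(G)$ inherits $\delta$, hence the colours $c(v)$ are recovered. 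Therefore applying \cref{constr:galtomob} to $\Gamma(G)$ produces the same polygons, the same positive-weight edges (one per internal edge, with the same label and weight), and the same zero-weight edges (one per two-white-face vertex), yielding the identical mobile; that is $\Pi(\Gamma(G))=\Phi(G)$ as elements of $\mathcal{M}(\mu,(1^d))$. Second, I would conclude: since $\Phi$ and $\Gamma$ are bijections and $\Phi=\Pi\circ\Gamma$, we have $\Pi=\Phi\circ\Gamma^{-1}$, a composite of bijections, hence bijective.

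\textbf{Main obstacle.} The routine part is the bookkeeping; the genuine content is checking that the mobile-construction recipe truly sees only $\Gamma$-invariant data, so that it is well-defined on the cactus and agrees with $\Phi$. The subtle point is that \cref{constr:galtomob} references weights $w(e)$ and colours $c(v)$ defined via the distance labelling on the galaxy, whereas on the cactus one has only the canonical corner labelling; I must invoke \cref{rem:coherent} (in genus $0$ all canonical corner labellings are coherent) and the inheritance of $\delta$ to ensure these quantities are intrinsic to $C$ and match. A careful treatment must also confirm that the three local configurations of \cref{fig:localcactus} account for every way a galaxy vertex contributes to the mobile, so no edge is created or lost in passing from $G$ to $\Gamma(G)$. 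Once these identifications are secured, the factorisation argument delivers the result without any further counting; the alternative of building the inverse of $\Pi$ by hand from the local gluing data would work but is strictly harder than leveraging the two bijections already in place.
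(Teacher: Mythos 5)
Your factorisation $\Phi=\Pi\circ\Gamma$ is correct, and the well-definedness check you outline (that the mobile-building recipe sees only data preserved by $\Gamma$ — internal edges for non-geodesic edges, two-white-face vertices for split vertices, colours recovered from the coherent canonical corner labelling via \cref{rem:coherent}) is genuine content worth writing down. The problem is the logical direction. The paper does not prove this theorem itself; it cites Proposition 7 of Duchi--Schaeffer, and it tells you explicitly how that source proceeds: \enquote{Rather than proving explicitly that $\Phi$ is bijective, the authors of \cite{duchi2014bijections} split $\Phi$ into two steps, then prove that each is bijective.} In other words, the bijectivity of $\Pi$ (together with that of $\Gamma$) is an \emph{ingredient} of the proof of \cref{thm-schaeffer}, not a consequence of it. Deducing \enquote{$\Pi$ bijective} from \enquote{$\Phi$ bijective and $\Gamma$ bijective} runs the established argument backwards and is circular once the citations are unwound: there is no proof of \cref{thm-schaeffer} available to you that does not already contain the statement you are trying to prove.

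The substantive content of the proposition is precisely the step you set aside as the \enquote{strictly harder} alternative: one must construct the inverse $\mathcal{M}(\mu,(1^d))\to C_0(\mu,(1^d))$ directly, showing that from a shift-equivalence class of mobiles one can reassemble a unique cactus — both that the reconstruction always succeeds (surjectivity) and that distinct cacti yield non-shift-equivalent mobiles (injectivity, which requires handling the observation that shifting the mobile corresponds to shifting the colouring of the cactus by $1$ modulo $b+1$). In this paper's language that reconstruction is what the single Hurwitz Dyck path machinery around \cref{constr:dyckcacti} encodes; in the original source it is the explicit inverse construction of Proposition 7. Without some version of that argument, your proof has no independent foundation.
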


We remark that the shift equivalence classes of the Hurwitz mobiles correspond to shifting the colour of each vertex in a cactus by $1$ mod $b+1$. We will see immediately from the construction that \cref{constr:dyckcacti} can be easily adapted to construct Hurwitz mobiles from single Hurwitz Dyck paths.

Finally, we discuss how Dyck paths encode the procedure that describes how to re-glue a cactus to a Hurwitz galaxy.

\begin{definition}
   Let $D\in D(\mu)$ a single Hurwitz Dyck path. Let $v$ and $w$ be lattice points on $D$ with the same height. Let $H$ be the horizontal line connecting $v$ and $w$. We call $H$ a \textit{gluing line}, if
   \begin{enumerate}
       \item $v$ and $w$ are not valleys in $D$,
       \item the only other vertices of $D$ on $H$ are valleys.
   \end{enumerate}
\end{definition}

We note that gluing lines are exactly those maximal horizontal lines that live under the Dyck path. The following lemma was essentially proved in the proof of \cite[Proposition 5]{duchi2014bijections}.

\begin{lemma}
\label{lem:gluehor}
Let $D\in D(\mu)$ a single Hurwitz Dyck path and $C$ its corresponding cactus via \cref{constr:dyckcacti}. Moreover, let $G=\Gamma^{-1}(C)$ the marked Hurwitz galaxy corresponding to $C$.\\
Then, $G$ is obtained from $C$ by gluing those tuples of vertices that lie on a shared gluing line.
\end{lemma}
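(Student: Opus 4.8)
The plan is to unravel both constructions in parallel and track what the gluing lines encode. The statement asserts that the inverse $\Gamma^{-1}$, applied to the cactus $C$ built from a Dyck path $D$ via \cref{constr:dyckcacti}, re-assembles the marked Hurwitz galaxy $G$ by identifying exactly those vertex-tuples sitting on a common gluing line. Since $\Gamma$ is the bijection of \cref{constr:galtocac} (which splits all vertices with two incoming geodesic edges and deletes non-geodesic edges to pass from $G$ to $C$), the inverse must reglue the split vertices and re-insert the non-geodesic edges. So the first step is to recall precisely how $\Gamma^{-1}$ operates: it identifies pairs of vertices of $C$ that came from a single $4$-valent vertex of $G$ under splitting, and it reattaches the internal edges of $C$ as non-geodesic edges. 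The task reduces to proving that these two operations on $C$ are encoded exactly by the gluing lines of $D$.

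First I would set up the dictionary supplied by \cref{rem:cornerlabel} and the remark following \cref{constr:dyckcacti}: the $y$-coordinate of a lattice point equals the canonical corner label of the corresponding boundary position of $C$, and vertices of $D$ identified in the cactus are precisely those joined by a horizontal segment lying weakly below the path. Then I would examine what a gluing line picks out. By \cref{def:tdyckpath} and the construction, the two types of gluings in $G$ correspond to the two types of vertices discussed in the splitting analysis of \cref{fig:localcactus}: a $4$-valent vertex with two incoming geodesic edges (giving a zero-weight edge, i.e. a vertex incident to two white faces, i.e. a marked pair) and a $4$-valent vertex with one incoming geodesic and one incoming non-geodesic edge (an internal edge of $C$, i.e. a distinguished vertex). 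The key observation is that two boundary positions of $C$ are identified to a single galaxy vertex precisely when they share a canonical corner label \emph{and} the boundary arc between them contains no position forced to a strictly lower label — which is exactly the combinatorial content of a horizontal segment lying weakly below $D$ with only valleys in its interior, namely a gluing line. I would verify the two conditions of the gluing-line definition: endpoints not being valleys corresponds to the positions genuinely being distinct boundary corners that merge, while interior vertices being valleys corresponds to nested sub-cacti that were already regluied at lower levels and therefore do not obstruct the identification.

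I would then argue the correspondence in both directions. For the forward direction, given a gluing line $H$ from $v$ to $w$, the equality of heights $v_y=w_y$ together with \cref{rem:cornerlabel} forces the two corresponding boundary corners of $C$ to carry the same canonical corner label; since the labelling is coherent in genus zero (\cref{rem:coherent}), these corners belong to vertices of the same colour, and the absence of non-valley interior vertices guarantees that the oriented boundary path between them closes up into a cycle realizable in $G$. Hence $\Gamma^{-1}$ must identify this tuple. For the converse, any identification performed by $\Gamma^{-1}$ arises from undoing a split or reattaching an internal edge, and in either case the two merging corners have equal distance label $\delta$ (equivalently equal $y$-coordinate) with no lower-label corner strictly between them along the relevant boundary arc, which is precisely a gluing line. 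The cleanest way to make this rigorous is to induct on the height of the gluing line (or equivalently on the nesting depth of marked pairs in $D$), peeling off innermost pyramids first, since the remark after \cref{constr:dyckcacti} already identifies horizontal chords below the path with cactus identifications.

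The main obstacle I anticipate is the bookkeeping around the two distinct gluing types and showing that the geometric condition defining a gluing line (endpoints not valleys, interior vertices only valleys) matches \emph{exactly} the combinatorial condition under which $\Gamma^{-1}$ merges two corners, with no spurious or missing identifications. In particular, one must rule out that a horizontal chord lying below the path but \emph{with a non-valley interior vertex} could ever correspond to a legitimate galaxy identification — this is where the non-crossing condition (5) of \cref{def:singlehurwitzdyck} and the coherence of the corner labelling do the real work, ensuring the nested structure of marked pairs mirrors the tree structure of $\Theta(G)$ from \cref{constr:galtocac}. Since this lemma was, as noted, essentially established inside the proof of \cite[Proposition 5]{duchi2014bijections}, I would lean on that argument for the detailed verification and present the Dyck-path reformulation as the organizing principle, checking only that the translation of their cactus-boundary bookkeeping into heights and horizontal chords is faithful.
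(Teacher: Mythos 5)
The paper does not actually prove this lemma: it states that the result ``was essentially proved in the proof of [Proposition 5]'' of \cite{duchi2014bijections} and leaves it at that. Your proposal ends in the same place, deferring the detailed verification to that reference, so at the level of what is actually established there is no real difference between your write-up and the paper's. Your organising dictionary (heights $=$ canonical corner labels via \cref{rem:cornerlabel}, coherence in genus zero via \cref{rem:coherent}, the two local pictures of \cref{fig:localcactus}, and a two-directional check) is the right skeleton for a self-contained argument.

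There is, however, one point where your reading of the statement goes wrong. You write that the endpoints of a gluing line are ``the positions genuinely being distinct boundary corners that merge, while interior vertices being valleys corresponds to nested sub-cacti that were already reglued at lower levels and therefore do not obstruct the identification.'' The interior valleys are not bystanders: a valley of $D$ is precisely a lattice point where a white boundary edge is followed by a black boundary edge, i.e.\ a distinguished vertex, and the lemma asserts that the \emph{entire} tuple on the gluing line --- both non-valley endpoints \emph{and} the interior valleys --- is identified to a single vertex of $G$. This is exactly the situation of \cref{fig:localcactus}(b): a $4$-valent galaxy vertex with one incoming non-geodesic edge produces \emph{three} cactus vertices, which on the Dyck path appear as the two endpoints and one interior valley of a single gluing line. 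If you only identify the endpoints and treat the valleys as already handled ``at lower levels,'' the internal edges of $C$ end up attached at the wrong vertices and the resulting graph is not $G$. A smaller slip: you attribute to $\Gamma$ the operations of $\Theta$ --- $\Gamma(G)$ does not delete non-geodesic edges (they survive as internal edges of the cactus), so $\Gamma^{-1}$ has nothing to ``re-insert''; it only re-identifies boundary vertices and fuses each white/black boundary-edge pair back into a geodesic edge. Neither issue is fatal to the overall strategy, but the first one changes what has to be verified in the forward direction of your argument.
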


\subsection{Pruned Hurwitz Dyck Paths and pruned Hurwitz mobiles}
\label{sec:prunhurdyck}
In this subsection, we express pruned single Hurwitz numbers in terms of \textit{pruned Hurwitz mobiles}. We begin by deriving a classification of \textit{pruned single Hurwitz Dyck paths}, where we call a single Hurwitz Dyck path $D$ pruned if it gives rise to a pruned Hurwitz galaxy.

\begin{proposition}[Classification of pruned single Hurwitz Dyck paths]
    A single Hurwitz Dyck path $D$ of type $\mu$ is pruned if and only if the following conditions are satisfied:
    \begin{itemize}
         \item (descent condition) any sequence of down-steps of the form $d^b$ that begins at a peak of $D$ must contain a vertex belonging to a marked pair.
        \item (lowest distinguished vertex condition) either the distinguished vertex $v$ of lowest non-zero height in $D$ is not the second distinguished vertex of $D$  (i.e. the only distinguished vertex to the left of $v$ in $D$ is the origin) or if it is the second distinguished vertex of $D$, then there must be a vertex belonging to a marked pair to the left of $v$.
    \end{itemize}
\end{proposition}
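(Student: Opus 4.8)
The plan is to unwind the definition of \emph{pruned} through the chain of bijections established above. By definition $D$ is pruned exactly when its associated marked Hurwitz galaxy $G=\Gamma^{-1}(\mathcal{C}(D))$ lies in $PG_0(\mu)$, that is, when $G$ possesses no black face that is a bubble, so the whole task is to read off the presence of black bubbles directly from the combinatorics of $D$. First I would set up the dictionary between black faces and the Dyck path. Since $\nu=(1^d)$, every black face has degree $1$, and in the cactus $\mathcal{C}(D)$ each such face contributes exactly $b$ black boundary edges together with a single internal edge (as recorded in the proof that $D(C)\in D(\mu)$); under \cref{constr:dyckfromcactus} the black boundary edges are precisely the up-steps, so the black faces of $G$ are in bijection with the maximal increases of $D$, each indexed by the distinguished vertex at its base. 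The internal edge of such a face corresponds to its unique non-geodesic edge in $G$, which under \cref{constr:galtocac} is incident to a $4$-valent vertex; hence \emph{every} black face carries at least one $4$-valent vertex, sitting at the top of its maximal increase.

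The core step is then to detect when a black face carries a \emph{second} $4$-valent vertex, since a bubble is characterised by having exactly one. Here I would invoke the re-gluing description of \cref{lem:gluehor}: passing from $\mathcal{C}(D)$ back to $G$ identifies precisely the tuples of boundary vertices lying on a common gluing line, and it is exactly these identifications that fuse two $2$-valent boundary vertices into a $4$-valent one. Tracking a fixed black face, its up-run vertices are glued, height by height, to the vertices of the descending run $d^b$ immediately following its terminating peak (the boundary of the adjacent white face). An extra $4$-valent vertex therefore appears on the face if and only if one of these identifications along the run $d^b$ is recorded by a vertex belonging to a marked pair. Consequently a given black face is a bubble if and only if the run $d^b$ beginning at its peak contains no marked-pair vertex, and demanding that \emph{no} black face be a bubble yields exactly the descent condition. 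Throughout, the main care is in keeping the orientation, colour, and $b$-periodicity conventions of \cref{constr:dyckcacti} aligned so that ``contains a marked-pair vertex'' is matched with ``possesses a second $4$-valent vertex'' with no off-by-one error.

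The subtlety — and the step I expect to be the main obstacle — is the black face at the bottom of the Dyck path, namely the one indexed by the distinguished vertex $v$ of lowest non-zero height. For an ordinary black face the run $d^b$ following its peak is an honest interior segment of $D$ and the argument above applies verbatim; but for the lowest face the descent falls towards the $x$-axis and interacts with both the marked vertex and the distinguished origin $(0,0)$, so the identification that would create its second $4$-valent vertex is no longer detected by a marked pair inside a clean $d^b$. I would treat this case by hand, splitting according to whether $v$ is the second distinguished vertex of $D$ (so that the origin is the only distinguished vertex to its left) or a later one. When $v$ is not the second distinguished vertex, an earlier black face already forces a gluing supplying the required second $4$-valent vertex and no extra hypothesis is needed; when $v$ \emph{is} the second distinguished vertex, the face is a bubble unless a marked-pair vertex occurs to the left of $v$ to break it. Assembling these cases produces precisely the lowest distinguished vertex condition, and together with the descent condition it characterises exactly the pruned single Hurwitz Dyck paths.
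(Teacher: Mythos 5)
Your overall strategy coincides with the paper's: translate ``no black bubbles'' through the dictionary black faces $\leftrightarrow$ runs of $b$ up-steps based at distinguished vertices, $4$-valent vertices $\leftrightarrow$ distinguished vertices (excluding the origin) together with marked pairs (excluding $((0,0),(2bd,0))$), and then use \cref{lem:gluehor} to decide which of these get glued onto a given black face. Your derivation of the descent condition is essentially the paper's (modulo the small slip of placing the automatic $4$-valent vertex of a black face at the top of its up-run; it corresponds to the distinguished vertex at the base, which is where the internal edge attaches).

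The gap is in your treatment of the second condition: you attach it to the wrong black face. The exceptional face is not the one based at the distinguished vertex $v$ of lowest non-zero height, but the black face containing the marked vertex, i.e.\ the one based at the origin. That face is special precisely because its base corresponds to the marked vertex, which is $2$-valent, so unlike every other black face it carries no automatic $4$-valent vertex from its own distinguished vertex; moreover the descent condition says nothing about it, since either the first maximal increase is longer than $b$ and there is no peak at $(b,b)$, or primitivity prevents the descent from the first peak from being a full $d^{b}$ (it would have to return to the $x$-axis before the endpoint). Its $4$-valent vertices are exactly the distinguished vertices and marked-pair vertices glued to the first up-run along gluing lines: $v$ is always among them; a second distinguished vertex is among them if and only if $v$ is not the second distinguished vertex; and a marked-pair vertex is among them if and only if one occurs to the left of $v$, since such a vertex lies on the descent from the first peak at some height $h>h_v$ and the gluing line at height $h$ through it terminates at $(h,h)$ on the first up-run. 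By contrast, the face based at $v$ that you single out has $v$ itself as an automatic $4$-valent vertex and is governed by the descent condition at its own peak like any other face, and a marked-pair vertex to the left of $v$ is glued to the \emph{first} up-run rather than to the up-run based at $v$, so it cannot ``break'' a bubble on the face you chose. The conditions you arrive at are of course the stated ones, but the mechanism you propose for the lowest distinguished vertex condition does not establish them; the case analysis has to be carried out on the black face containing the marked vertex.
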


\begin{proof}
If the descent condition and the lowest distinguished vertex condition are satisfied, then clearly any black face carries at least two four valent vertices in the corresponding galaxy.\\
For the other direction, let $G$ be a pruned Hurwitz galaxy and $D(G)$ be its corresponding single Hurwitz Dyck path. Recall that any black face of a galaxy is a bubble if and only if it contains exactly one $4$-valent vertex. Therefore, any black face in $G$ contains at least two $4$ valent vertices. As outlined in \cref{constr:dyckcacti} each sequence of $b$ up-steps starting at a distinguished vertex corresponds to a black face of the cactus $C$ and thus the galaxy $G$ that we started with. Moreover, the vertices of $D(G)$ corresponding to the 4-valent vertices of $G$ are precisely the distinguished vertices (excluding the origin) and the marked pairs (excluding the pair $((0,0),(2bd,0))$). The vertices of $D$ corresponding to the marked vertex of $G$ are $(0,0)$ and $(2bd,0)$. By \cref{lem:gluehor}, all vertices of $G$ are obtained by gluing lattice points on gluing lines of $D$. 
If the descent condition is violated then the preceeding black face is not glued to a marked pair or an extra distinguished vertex and therefore only contains one $4$-valent vertex corresponding to the single distinguished vertex. If the lowest distinguished vertex condition is violated, then the black face in $G$ which contains the marked vertex contains just one 4-valent vertex, since it is glued to no vertices belonging to marked pairs, and to a single distinguished vertex, namely the second one. In either case, we obtain a bubble, which is a contradiction, since $G$ was assumed to the pruned.
\end{proof}

Before, we can state our main theorem of this section, we have to introduce several notions.

\begin{definition}
\label{def:mobiledistance}
Let $P$ be a black polygon in a single Hurwitz mobile $M$, and let $y$ be the weight $1$ edge incident to $P$. Furthermore let $z$ be some other labelled edge of $M$. We define the white distance $d_{\circ}(y,z)$ (resp. black distance $d_{\bullet}(y,z)$) between $y$ and $z$ to be the number of white polygon arcs (resp. black polygon arcs) traversed in a counterclockwise path that starts at $y$, traverses along the black arc belonging to $P$, continues along $M$ and ends at $z$.

\end{definition}

\begin{definition}
\label{def:distance}
Let $P$ be a black polygon in a single Hurwitz mobile $M$, and let $y$ be the weight $1$ edge incident to $P$. We say that the edge labelled $y$ is \textit{interrupted} by some edge $z$ of $M$ if: 

\begin{itemize}
    \item $d_{\circ}(y,z)<d_{\bullet}(y,z)$, or,
    \item $\dw(y,z)=\db(y,z)$ with $y<z$.
\end{itemize}
\end{definition}

\begin{example}
    We consider the Hurwitz mobile depicted in \cref{fig:hurmobile}. Then, we have e.g. $\dw(0,2)=\dw(2,1)=\dw(1,0)=1$ and $\dw(0,1)=2$. Moreover, we have $\db(0,2)=\db(2,1)=\db(1,0)=1$ and $\db(0,1)=2$. Then, we see that the edge labelled $0$ is interrupted by the edges labelled $2$ and $1$ since $\dw(0,2)=\db(0,2)=1$ and $\dw(0,1)=\db(0,1)=2$ with $0<1,2$. On the other hand, the edges labelled $2$ and $1$ are not interrupted by the edges labelled $1$ and $0$ respectively since $2>1>0$.
\end{example}

\begin{definition}
    We call a single Hurwitz mobile $M$ pruned if it corresponds to a pruned Hurwitz galaxy. Moreover, we call $M$ in standard form if it is the element of its shift equivalence class that is the image of a marked Hurwitz galaxy under \cref{constr:galtomob}.
\end{definition}

\begin{theorem}[Classification of pruned single Hurwitz mobiles]
\label{thm:classhurmob}
    A single Hurwitz mobile $M$ in standard form of type $\mu$ is pruned if and only if each black polygon $P$ of $M$ satisfies either of the following two conditions:
    \begin{enumerate}
    \item The edge of weight $1$ that is incident to $P$ has label $y \neq 0$ and is interrupted by the next labelled edge $z$ in $M$ (i.e. the first edge reached in a counterclockwise path starting at $y$).
    \item The edge of weight $1$ that is incident to $P$ has label $0$ and:
    \begin{itemize}
        \item[(a)] working counter-clockwise from the edge $0$, the next labelled edge in the mobile is a non-weighted edge, or
        \item[(b)] the next labelled edge in the mobile (again working counter-clockwise from the edge $0$) is a weight $1$ edge of label $y$ and there exists some edge of label $z \neq 0$ in $M$ that does not interrupt $y$.
        
    \end{itemize}
\end{enumerate}
\end{theorem}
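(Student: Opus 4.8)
The plan is to transport the \emph{Classification of pruned single Hurwitz Dyck paths} across the bijections already established in this section. The key observation is that the previous proposition classifies pruned Dyck paths via two conditions (descent and lowest distinguished vertex), and that \cref{constr:dyckcacti} together with \cref{constr:galtomob} translates a single Hurwitz Dyck path into a Hurwitz mobile in standard form. So rather than arguing from scratch about black polygons of the mobile, I would set up a dictionary between the local data around a black polygon $P$ of $M$ and the local data around the corresponding distinguished vertex $v$ in the Dyck path $D$, and then check that the descent condition translates precisely into condition (1)/(2b) and the lowest distinguished vertex condition translates into the special treatment of the edge labelled $0$ in condition (2).

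First I would fix a pruned single Hurwitz mobile $M$ in standard form and let $D\in D(\mu)$ be the single Hurwitz Dyck path producing it, so that each black polygon $P$ corresponds to a maximal increase $d^b$ (equivalently a distinguished vertex $v$ that is not the origin) as in \cref{constr:dyckcacti}. I would record how the white and black distances $\dw$ and $\db$ of \cref{def:mobiledistance} read off from the geometry of $D$: traversing the boundary counterclockwise from the weight-$1$ edge at $P$ corresponds to moving along $D$ to the next distinguished or marked vertex, with white arcs counting down-steps along white faces and black arcs counting the black-polygon internal edges. Under this dictionary, the \emph{interruption} condition of \cref{def:distance} becomes exactly the statement that the sequence $d^b$ beginning at the peak above $v$ reaches a marked vertex (a vertex belonging to a marked pair) before it would close off a bubble, i.e. the descent condition. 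The inequality $\dw<\db$ versus $\dw=\db$ with a label comparison $y<z$ encodes whether the intervening vertex is a marked pair vertex or another distinguished vertex, and the tie-break by label corresponds precisely to the cyclic labelling coming from the shift (standard form), which is why the hypothesis ``in standard form'' is essential.

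Next I would handle the dichotomy in the label of the weight-$1$ edge. The black polygon whose weight-$1$ edge carries label $0$ is precisely the one adjacent to the marked vertex of $G$, namely the one corresponding to the origin side of the Dyck path; this is exactly the black face singled out by the lowest distinguished vertex condition. I would show that case (2a), where the next labelled edge is non-weighted (zero weight), corresponds in $D$ to the adjacent vertex being a marked pair, so the face acquires a second $4$-valent vertex and is not a bubble; and that case (2b) corresponds to the lowest distinguished vertex $v$ being the second distinguished vertex together with the existence of a marked pair to its left, which in the mobile manifests as an edge of label $z\neq 0$ that does not interrupt $y$. The two bullet points of the Dyck classification thus match conditions (1)/(2b) and (2a)/(2b) on the nose, after separating the $y\neq 0$ and $y=0$ cases.

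The main obstacle I anticipate is the careful bookkeeping of the tie-break by label in \cref{def:distance}, that is, establishing rigorously that ``$\dw=\db$ with $y<z$'' in the mobile corresponds to the correct positional statement in the Dyck path. This is where the standard-form hypothesis, the cyclic colour shift, and the non-crossing condition (5) of \cref{def:singlehurwitzdyck} all interact, and it requires checking that the counterclockwise traversal in \cref{def:mobiledistance} visits the white and black arcs in the same order as the boundary traversal of \cref{constr:dyckfromcactus} visits down-steps and internal edges. Once this correspondence of orderings is nailed down, the equivalence of the two classification theorems follows by translating each bullet of the Dyck-path statement into the matching numbered condition on black polygons, and the theorem is proved.
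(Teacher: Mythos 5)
Your proposal follows essentially the same route as the paper's proof: fix the mobile $M$, pass to the corresponding Dyck path $D$ via the established bijections, and show that condition (1) matches the descent condition and condition (2) matches the lowest distinguished vertex condition, with the tie-break ``$\dw=\db$ and $y<z$'' handled by an explicit count of down-steps between consecutive distinguished/marked vertices (the paper's formula $(b+1)\dw(y,z)+y-z-\db(y,z)$ is precisely the bookkeeping you flag as the main obstacle). The only work left beyond your outline is carrying out that count and the contradiction argument ruling out case (2b) when the lowest distinguished vertex is the second one, both of which the paper executes along the lines you describe.
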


\begin{proof}
    Let $M$ be a Hurwitz mobile of type $(\mu,1^d)$, $G$ the corresponding Hurwitz galaxy and $D$ the respective Dyck path. We show that $M$ satisfies the first condition if and only if $D$ satisfies the descent condition. Moreover, we prove that $M$ satisfies the second condition if and only if $D$ satisfies the lowest distinguished vertex condition. Then, the theorem follows.\\
    Let $M$ satisfy the first condition. Let $P$ be a black polygon in $M$ and $\tilde{u}$ the corresponding sequence of $b$ up-steps in $D$. Let $\tilde{d}$ be the set of $b$ down-steps in $D$ that is glued to $\tilde{u}$ along horizontal lines. Then, the last lattice point of $\tilde{d}$ has label $y\,\mathrm{mod}\,(b+1)$. If $\tilde{d}$ is not a connected sequence then there is nothing to prove. If it is a connected sequence, then we see that it must contain a vertex belonging to a marked pair.
    This is due to the fact that the number of down-steps traversed before the next distinguished vertex or marked pair is precisely $(b+1)\cdot \dw(y,z)+y-z-\db(y,z)$.
    \begin{itemize}
        \item If $\dw(y,z)=0$, then we have $y>z$ since the two edges have to live on the same node. Then, we have that $(b+1)\cdot \dw(y,z)+y-z-\db(y,z)\le y-z\leq b$. We cannot $y-z=b$ since $4$-valent vertices must have distinct labels in galaxies.
        \item If $\dw(y,z)=1$ but $z>y$, we have $(b+1)\cdot \dw(y,z)+y-z-\db(y,z)= b+1+y-z-\db(y,z)$. Since the edge $y$ is incident to a black face, we have $\db(y,z)\ge1$ (in fact it is equal to $1$, since $z$ is the first edge interrupting $y$.). Therefore, we obtain with $z>y$, that $b+1+y-z-\db(y,z)<b$.
    \end{itemize}
     Note that $\dw(y,z)$ cannot be greater than $1$ as $\db(y,z)=1$ and $y$ is necessarily interrupted by $z$. Thus, in the two possible cases, the next distinguished vertex or vertex belonging to a marked pair is part of the $\tilde{d}$ down-steps. Finally, we note that any sequence of down-steps as considered in the descent condition arises this way. This proves one direction. The reverse direction is obtained by working the same steps backwards.\\
    Now, let $M$ satisfy the second condition. Moreover, let $P$ be the black polygon that is incident to the edge of label $0$. Let $v$ be the lowest distinguished vertex in $D$ corresponding to the edge of label $z$. If $v$ is not the second distinguished vertex in $D$, there is nothing to prove. If it is the second distinguished vertex, then we prove that $M$ has to satisfy case (a) which immediately proves the assertion that there is a marked pair to the left of $v$. Therefore, assume that $v$ is the second distinguished vertex in $D$ and that $M$ satisfies case (b) of the second condition.\\
    The edge of label $y$ in case (b) corresponds to the vertex $v$. Let $z$ be an edge that does not interrupt $y$ and let $w$ the corresponding vertex. We note that $w$ is on the right of $v$. Considering the lattice path on $D$ from $v$ to $w$, we define $D_{\textrm{up}}(w,v)$ the number of upsteps on this path and $D_{\textrm{down}}(w,v)$ the number of down-steps. Either $w$ is a distinguished vertex, then clearly, we have
    \begin{equation}
    \label{equ:inequ}
        D_{\textrm{up}}(w,v)-D_{\textrm{down}}(w,v)>0.
    \end{equation}
    If $w$ is a marked vertex, its $y$-coordinate can only be smaller than the $y$-coordinate of $v$ if it comes from a pair with one lattice point on the first sequence of up-steps of $D$ and the other on the last sequence of down-steps. This is however not possible, since $y$ is the first labelled edge after $0$. Thus, we again have \cref{equ:inequ}.
    Moreover, we observe that
    \begin{equation}
        D_{\textrm{down}}(w,v)=(b+1)\cdot\dw(y,z)+y-z-\db(y,z)\quad\textrm{and}\quad  D_{\textrm{up}}(w,v)=b\cdot \db(y,z).
    \end{equation}
    Therefore, in total, we obtain
    \begin{equation}
        0<D_{\textrm{up}}(w,v)-D_{\textrm{down}}(w,v)=(b+1)(\db(y,z)-\dw(y,z))+y-z.
    \end{equation}
    However, since $z,y\neq0$, we have that $|z-y|<b$. Therefore, we either have $\db(y,z)>\dw(y,z)$ or $\db(y,z)=\dw(y,z)$ and $y<z$. In both cases, we obtain that $z$ does interrupt $y$ a contradiction. Therefore, we are in case (a). This proves that the second condition in the theorem implies the lowest distinguished vertex condition. Again working backwards, we obtain the inverse implication, which completes the proof.
\end{proof}

\section{Tropical Hurwitz covers}
\label{sec:trop}
The idea of connecting Hurwitz numbers to tropical geometry, interpreting double Hurwitz numbers as the weighted count of the constructed tropical graphs was introduced in \cite{cavalieri2010tropical}. This tropical interpretation proved fruitful in producing polynomiality results; in particular it was used in the proof of wall-crossing formulae for double Hurwitz numbers in genus $0$.

\subsection{Tropical graphs}
\label{sec:tropgra}
Tropical geometry can be considered as a ``combinatorial shadow'' of algebraic geometry, where piece-wise linear objects called tropical graphs can be obtained as a skeleton of degenerated algebraic curves.
Though this tropicalisation procedure loses information, many properties of the algebraic curves continue to be determinable from their corresponding tropical curve. 

To begin, we define the edges of a tropical graph as follows.

\begin{definition}
    Let $\Gamma$ be a connected graph. We say an edge is an \textit{end} if it is adjacent to a $1$-valent vertex. Edges that are not ends are called \textit{bounded edges}.
\end{definition}

    We use $V(\Gamma)$ to denote the vertex set of the graph $\Gamma$. Furthermore, we denote the set of $1$-valent vertices (i.e. leaves) by $V_\infty(\Gamma)$, whereas we denote by $V_0(\Gamma)$ the set of vertices with a valency greater than $1$, called inner vertices. 
    
    Moreover, we use $E(\Gamma)$ to denote the edge set of the graph $\Gamma$, 
    $E_\infty(\Gamma)$ to denote the subset of ends, and $E_0(\Gamma)$ to denote the subset of bounded edges.
    Thus, we may define a tropical curve as follows.

    \begin{definition}
        An \textit{abstract tropical curve} is a connected graph $\Gamma$ (with $E(\Gamma) \neq \emptyset$) such that 
        \begin{enumerate}
            \item $\Gamma \backslash V_\infty (\Gamma)$ is a metric graph. That is, $\Gamma$ is equipped with a map
            $$ l : E(\Gamma) \rightarrow \mathbb{R} \cup \{\infty \}$$
            $$ e \mapsto l(e)$$
            such that $l(E(\Gamma) \backslash E_\infty(\Gamma) ) \subset \mathbb{R}$ and all ends $e \in E_\infty (\Gamma)$ have length $l(e) = \infty$,

            \item the inner vertices $v\in V_0(\Gamma)$ have non-negative integer weights. Namely, the graph $\Gamma$ is further equipped with a map
            $$ g: V_0(\Gamma) \rightarrow \mathbb{N}$$
            $$ v \mapsto g_v.$$

        \end{enumerate}
    \end{definition}

    If we consider a vertex $v\in V_0(\Gamma)$ we may define the integer $g_v$ to be the \textit{genus of} $v$. The genus of the curve $\Gamma$ is defined to satisfy $$ g(\Gamma) = \beta_1(\Gamma) + \sum_{v \in V_0(\Gamma)} g_v$$
    where $\beta_1(\Gamma)$ is the first Betti number of $\Gamma$.
    In our analysis, we only consider so-called ``explicit'' tropical curves $\Gamma$, which have $g_v = 0$ for ever inner vertex. That is, the genus of our graphs satisfies $$ g (\Gamma) = \beta_1(\Gamma).$$

    \begin{definition}\label{def combinatorial type of curve}
        The \textit{combinatorial type} of a tropical curve is the equivalence class of tropical curves where any two curves are equivalent if they differ only by the length of their edges.
    \end{definition}
    
    One tropical curve that is important in our story is the tropical projective line.

    \begin{example}
        We denote by $\mathbb{T}\mathbb{P}^{1}(\mathbb{C})$ the tropical $\mathbb{P}^{1}(\mathbb{C})$, and define this as $\mathbb{T}\mathbb{P}^{1}(\mathbb{C}) = \mathbb{R} \cup \{ \pm \infty \}$. Furthermore, we may construct vertices on this curve by picking a finite number of points $p_i \in \mathbb{R}$ and creating a vertex at each point. We define the length of the edges that joins two inner vertices to be their absolute distance in $\mathbb{R}$, and we define the length of the ends to be $\infty$.

        The genus of the tropical projective line is $g(\mathbb{T}\mathbb{P}^{1}(\mathbb{C})) = 0$ as expected-- \cref{fig:tropproj} shows us that this graph is a tree.
    \end{example}

    \begin{figure}[ht]
        \centering
        \includegraphics[width=\linewidth]{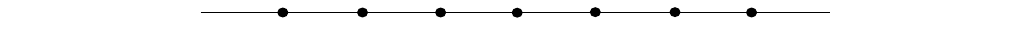}
        \caption{The tropical projective curve.}
        \label{fig:tropproj}
    \end{figure}

    \begin{remark}
        The above example is slightly ``cheating'', as a tropical curve with two $1$-valent ends, and all other vertices are $2$-valent.
    \end{remark}

    \subsection{Tropical covers}
    \label{sec:tropcov}
    There are many equivalent tropicalisations of Hurwitz numbers, including working via the symmetric group. In tropical geometry, algebraic curves tropicalise to combinatorial graphs. Therefore, maps between Riemann surfaces \textit{should} tropicalise to maps between graphs. 

    \begin{definition}\label{tropical cover def}
        A \textit{tropical cover} of two tropical curves $\Gamma_1, \Gamma_2 $ is a surjective map $f: \Gamma_1 \rightarrow \Gamma_2 $ which satisfies the following conditions:
        \begin{enumerate}
            \item (Locally integer affine linear) For any edge $e\in E(\Gamma_1)$, $f(e)$ is contained either in an edge of $\Gamma_2$ or in an inner vertex of $\Gamma_2$. Moreover, the map $f$ is piecewise integer affine linear. Namely, on an edge $e$ the map is locally a positive integer $w(e)$ called the weight of the edge, so that $$ l(f(e)) = w(e) l(e).$$

            \item (Balancing) For an inner vertex $v\in V_0(\Gamma_1)$, the local degree of $f$ at $v$ (denoted by $d_v$) is defined as follows; consider an edge $e_2$ adjacent to $f(v)$ in $\Gamma_2$, and sum the weights of all edges $e_2$ adjacent to $v$ in $\Gamma_1$ that map to $e_2$:
            $$ d_v = \sum_{e_1 \mapsto e_2} w(e_1).$$
        \end{enumerate} 
    \end{definition}

    The balancing condition thus ensures that the local degree of $f$ at $v$ is well defined, and independent of the choice of $e_2$.
    In our case, we choose the vertex sets $V(\Gamma_1)$ and $ V(\Gamma_2)$ in such a way that the tropical covers considered above are maps between graphs. In particular, the images and preimages of vertices are vertices.

    \begin{definition}
        A tropical cover between curves $\Gamma_1$ and $ \Gamma_2$ is called a \textit{tropical Hurwitz cover} if it satisfies the \textit{local Riemann--Hurwitz condition} at every vertex $v\in \Gamma_1$, meaning that if $v \mapsto v_2$ with local degree $d_v$, we have
        \begin{equation}\label{local RH condition eq}
            0 \leq d_v(2-2g(v_2)) - \sum_{e \text{ adj. to }v} \Big(w(e) -1 \Big) - (2-2g(v)).
        \end{equation} 
    \end{definition}
    
    The right hand side of this inequality can be thought of as a measure of the ramification at the vertex $v$, which cannot be negative.
    When considering explicit tropical curves, this expression simplifies to 
        \begin{equation}\label{local RH condition eq explicit}
            0 \leq 2d_v - \sum_{e \text{ adj. to }v} \Big(w(e) -1 \Big) - 2.
        \end{equation} 

     \begin{definition}
        The \textit{degree} $d$ of a tropical cover is the sum over all local degrees of preimages of a point $y\in \Gamma_2$. That is, we consider all points $x\in \Gamma_1$ such that $x\mapsto y$ and obtain $$ d = \sum_{x\mapsto y} d_x.$$
    \end{definition}

    By the balancing condition (\cref{tropical cover def} (2)), the degree of the tropical cover is independent of the choice of $y\in \Gamma_2$ that we take.

    Let us consider an end $e\in E_\infty(\Gamma_2)$, and let $\mu_e \vdash d$ be the partition obtained from the weights $w(e_1)$ of the ends $e_1\in E_\infty(\Gamma_1)$ which are mapped by $f$ onto $e$.

    \begin{definition}
        The partition $\mu_e$ is called the \textit{ramification profile} above $e$
    \end{definition}

    \begin{definition}
        We define the \textit{combinatorial type of a tropical cover} to be the equivalence class of covers when we drop all metric information about the curves. 
        Namely, it is comprised of the combinatorial types of the tropical curves $\Gamma_1$ and $ \Gamma_2$, the weights of the map, and the information of which edges map to which.   
    \end{definition}

        Given a tropical cover $f:\Gamma_1 \rightarrow \Gamma_2$, the combinatorial type of the cover, and the lengths of $\Gamma_2$, it is possible to recover the metric on $\Gamma_1$. Indeed, let us consider any edge $e_1$ of $\Gamma_1$ with weight $w(e_1)$ and unknown length $l(e_1)$. If $f(e_1)=e_2$, then we have $l(e_2) = w(e_1)\cdot l(e_1)$ by \cref{tropical cover def} $(1)$.

    \begin{remark}\label{realizability}
    It turns out that the local Riemann--Hurwitz condition is in fact a \textit{realizability} condition. Namely, only tropical curves satisfying \cref{local RH condition eq} at every point can be degenerations of covers of algebraic curves.
    \end{remark}
    
Now, we wish to define morphisms of tropical curves.
    
    \begin{definition}\label{def isomorphic covers}
        Tropical covers $f_1: \Gamma_1 \rightarrow \tilde{\Gamma}$ and $f_2 : \Gamma_2 \rightarrow \tilde{\Gamma}$ are called \textit{isomorphic} if there exists an isomorphism $ \phi: \Gamma_1 \rightarrow \Gamma_2  $ of the underlying tropical curves $\Gamma_1, \Gamma_2$ such that $ f_2 \circ \phi = f_1 $.
        \end{definition}

    In particular, we identify curves for which the following diagram commutes. 
    $$\begin{tikzcd}
    \Gamma_1 \arrow[rd, "f_1" ' near start] \arrow[rr, "\phi" ] & &  \Gamma_2 \arrow[ld, "f_2" near start] \\
     & \tilde{\Gamma} & 
    \end{tikzcd}$$ 
    Isomorphism classes of covers are an equivalence relation, allowing us to consider representatives for certain isomorphism classes of graphs. In our discussion this amounts to considering covers as equivalent whenever the maps sends the same vertices in $\Gamma_1$ to the same vertices in $\tilde{\Gamma}$, though when edges $e_i$ adjacent to a vertex $v$ in $\Gamma_1$ are mapped to an edge $\tilde{e}$ in $\tilde{\Gamma}$, the weights may be permuted among the edges $e_i$. 

     There are some shapes of special interest that appear in our tropical monodromy graphs, as they result in automorphisms of the tropical graphs.
  \begin{figure}[ht]
     \centering
     \includegraphics[width=\linewidth]{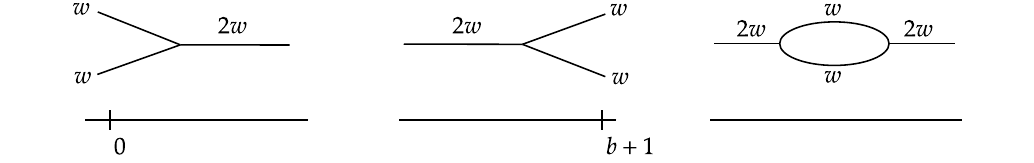}
     \caption{Balanced left pointing fork, balanced right pointing fork, and balanced wiener.}
     \label{fig:forks n weiners}
 \end{figure}
 
 \begin{definition}
 A \textit{left balanced pointing fork} (resp. \textit{right}) is a tripod with weights $w, w, 2w$ such that the edges of weight $w$ lie over $0$ (resp. $b+1$) (see \cref{fig:forks n weiners}).
 \end{definition}
 
 \begin{definition}\label{def wiener}
 A \textit{balancing wiener} appears in the graph when a strand of weight $2w$ splits into two strands of weight $w$, and then rejoins into a strand of weight $2w$ (see \cref{fig:forks n weiners})
 \end{definition}

 \subsection{Tropical double Hurwitz numbers}
 \label{sec:tropdhn}
The tropical monodromy graphs of \cite{cavalieri2010tropical} arise using an analysis of cut-and-join procedures in the symmetric group $S_d$. Fixing integers $g\geq 0$ and $d>0$, taking $\mu, \nu$ to be partitions of $d$, and taking $b=2g-2+\ell(\mu)+\ell(\nu)>0$, they are constructed as follows.

 \begin{definition}\label{def monodromy graph}
 \textit{Monodromy graphs of type} $(g, \mu, \nu)$ are tropical graphs with a map projecting to the segment $[0 , b+1] \subset \tPC$, constructed as follows:
 \begin{enumerate}
     \item Begin with $m$ strands over $0$ that are labelled by $\mu_1, \ldots, \mu_m$. These $\mu_i$ are called the weight of their respective strands.
     
     \item Create a $3$-valent vertex over the point $1$ by either joining two strands or cutting one strand that has weight strictly greater than $1$.
        \begin{itemize}
            \item For a join, the new strand is weighted with the sum of the weights of the strands joined,
            
            \item For a cut, the new strands are weighted in all possible positive ways that sum to the weight of the cut strand.
        \end{itemize}
    
    \item Consider one representative for each isomorphism class of tropical covers (as in \cref{def isomorphic covers}) 
    
    \item Repeat steps $2, 3$ above for each consecutive integer up to $b$.
    
    \item Consider all connected graphs that conclude over $b+1$ with $n$ strands of weight $\nu_1, \ldots, \nu_n$.
     
 \end{enumerate}
 
 \end{definition}

 \begin{remark}
 These tropical monodromy graphs should be treated as abstract graphs with weighted edges, mapping to the segment $[0, b+1]$ of the tropical projective line. That is to say, the relative positions of the strands is inconsequential, and there are no crossings between the strands.
 \end{remark}

 \begin{remark}
    These monodromy graphs can be considered as graphs with half edges. The vertex set $V(G)$ of a monodromy graph $G$ consists of the $b$ many $3$-valent vertices, the edge set $E(G)$ consists of the inner edges between these $b$ vertices, and we can consider the set of half edges $E^\prime$ to consist of the ends, i.e. the unbounded rays over $(-\infty, 1)$ and $(b+1, \infty)$ labelled with the parts of $\mu$ and $\nu$ respectively.
 \end{remark}

Let us illustrate this construction with an example. 

\begin{figure}[ht]
     \centering
     \includegraphics[width=\linewidth]{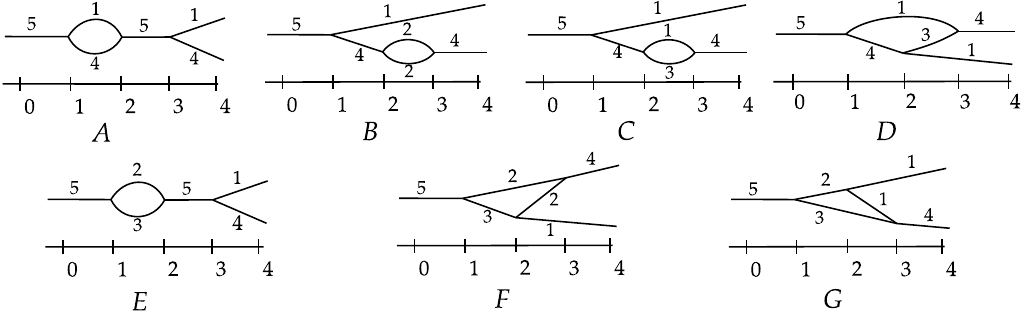}
     \caption{Monodromy graphs of type $(1, (5), (4, 1) )$.}
     \label{fig:monogrex2}
 \end{figure}
 
    \begin{example}\label{ex monogr2}
    We consider monodromy graphs of type $(1, (5), (4, 1) )$. That is, we construct connected graphs with $1$ strand of weight $5$ above $0$, and with $2$ strands of weights $1$ and $4$ above $4$. Furthermore, these graphs have $b=2-2+1+2=3$ many $3$-valent vertices over the points $1, 2$, and $ 3$ where strands may be joined, or cut into two strands of positive weight. 
    
    The graphs of this form are depicted in  \cref{fig:monogrex2}.
    We note that the implicit map for these graphs is the projection to the segment $[0, 4] \subset \tPC $. We further note that cover $B$ is the only cover containing an automorphism, namely a balanced wiener (as defined in \cref{def wiener}).
    \end{example}

    Using these tropical monodromy graphs, we may define tropical double Hurwitz numbers as follows. 

    \begin{definition}\label{deftropdhn}
    Let $g\geq 0, d>0$ be integers, $\mu, \nu$ partitions of $d$, and $b=2g-2+\ell(\mu) +\ell(\nu) >0$. The \textit{tropical double Hurwitz number} $H_g^{\text{trop}}(\mu, \nu)$ is the weighted sum of monodromy graphs $\Gamma$ of type $(g, \mu, \nu)$ by the formula 
    $$ H_g\tr(\mu, \nu) = \sum_{\Gamma} \frac{|\Aut(\mu)| |\Aut(\nu)|}{|\Aut (\Gamma )|} \prod_{e \text{ inner edge}} w(e) $$
    taking the product over the interior edge weights of the monodromy graphs $\Gamma$, where factors of $1/2$ for each balancing fork and wiener amounts to the size of the automorphism group of \ $\Gamma$.
    \end{definition}

    Using these tropical double Hurwitz numbers we may enumerate classical double Hurwitz numbers as a weighted sum of tropical covers using the following tropical correspondence theorem.

    \begin{theorem}\label{trop corr thm for DHN}(\cite[Section 4]{cavalieri2010tropical})
    Fix integers $d>0$ and $g\geq 0$, and let $\mu$, $\nu$ be partitions of $d$. Then, the classical count of the double Hurwitz number is equal to the tropical count, i.e.
    $$H_g(\mu, \nu) = H_g\tr(\mu, \nu). $$
    \end{theorem}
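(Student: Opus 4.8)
The plan is to match the two quantities through their shared description in terms of factorisations in the symmetric group. On the classical side I would invoke the symmetric-group formula for $H_g(\mu,\nu)$ stated earlier, which counts tuples $(\sigma_1,\tau_1,\dots,\tau_b,\sigma_2)$ with $\mathcal{C}(\sigma_1)=\mu$, $\mathcal{C}(\sigma_2)=\nu$, transpositions $\tau_i$, product $\sigma_1\tau_1\cdots\tau_b\sigma_2=\mathrm{id}$, and a transitivity condition, all divided by $d!$. The central observation is that the sequence of partial products
\[
\pi_0=\sigma_1,\qquad \pi_i=\sigma_1\tau_1\cdots\tau_i\quad(1\le i\le b),\qquad \pi_b=\sigma_2^{-1}
\]
encodes a cut-and-join history: since $\pi_i=\pi_{i-1}\tau_i$ and each $\tau_i$ is a transposition, the passage from $\pi_{i-1}$ to $\pi_i$ either joins two cycles into one or cuts one cycle into two. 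This is exactly the mechanism built into \cref{def monodromy graph}.

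From such a history I would read off a monodromy graph: the cycles of each $\pi_i$ become strands weighted by their lengths, a join at step $i$ becomes a $3$-valent vertex over $i$ where two strands merge, and a cut becomes one where a strand splits. The resulting weighted graph projects to $[0,b+1]\subset\tPC$, carries $\mu$ over $0$ and $\nu=\mathcal{C}(\sigma_2^{-1})$ over $b+1$, and is connected precisely because the tuple acts transitively on $[d]$; its first Betti number equals $g$ via the relation $b=2g-2+\ell(\mu)+\ell(\nu)$. Every $3$-valent cut-or-join vertex automatically satisfies the local Riemann--Hurwitz inequality, so the image is a bona fide monodromy graph of type $(g,\mu,\nu)$. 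This defines a map from factorisations to combinatorial types of such graphs.

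It then remains to compute, for a fixed combinatorial type $\Gamma$, the number of factorisations lying over it and to match this with the tropical weight in \cref{deftropdhn}. Here I would reconstruct the tuple one transposition at a time along $\Gamma$ while tracking a bijection of the sheets with $[d]$: at a join of strands of weights $w_1,w_2$ there are $w_1w_2$ admissible transpositions, while a cut of a strand of weight $w$ into prescribed pieces is realised by $w$ (or $w/2$, when the two pieces are equal) transpositions. The division by $d!$ absorbs the labelling of the sheets, the factors $|\Aut(\mu)||\Aut(\nu)|$ account for the ordering of the equal-weight ends over $0$ and $b+1$, and the symmetries of $\Gamma$ --- the factors $\tfrac12$ attached to each balanced fork and balanced wiener (\cref{def wiener}, \cref{fig:forks n weiners}) --- assemble into $1/|\Aut(\Gamma)|$. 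The main obstacle is precisely this multiplicity bookkeeping: one must show that, after these normalisations, the per-vertex join and cut multiplicities telescope along the graph into the clean product $\prod_{e\text{ inner}}w(e)$ demanded by \cref{deftropdhn}. Reconciling the $w_1w_2$ join weights and the cut weights with the interior-edge product, and verifying that the half-integer contributions are exactly matched by the graph automorphisms, is the crux, and is where the argument of \cite{cavalieri2010tropical} does the real work.
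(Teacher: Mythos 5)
The paper does not prove this theorem at all: it is imported verbatim from \cite[Section 4]{cavalieri2010tropical}, and the text that follows only illustrates the count on the example of type $(1,(5),(4,1))$. Your sketch is a correct outline of the argument given in that reference — reading a monodromy graph off the cut-and-join history of the partial products $\pi_i=\sigma_1\tau_1\cdots\tau_i$, getting connectivity from transitivity and the genus from the first Betti number via $b=2g-2+\ell(\mu)+\ell(\nu)$, and then matching the join multiplicity $w_1w_2$ and the cut multiplicity $w$ (or $w/2$ in the balanced case) against the product $\prod_{e}w(e)$ and the automorphism factors of \cref{deftropdhn}. You correctly identify the telescoping of these local multiplicities into the interior-edge product as the step carrying the real content; that is precisely what the cited source establishes, so nothing is missing beyond the computation you already flag as deferred.
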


    We give a demonstration of this count as follows.

    \begin{example}
    We consider the monodromy graphs of type $(1, (5), (4, 1))$ constructed in \cref{ex monogr2} (which can be seen in \cref{fig:monogrex2}).
  
    Let us note the following information:
    \begin{itemize}
        \item The graph $B$ contains a balanced wiener, meaning that $|\Aut(B)| = 2$
        
        \item None of the other graphs contain balanced forks or balanced wieners, so $|\Aut(\Gamma)| = 1$ for $\Gamma = A, C, D, E, F, G$.

        \item The partitions $\mu = (5),\ \nu = (4, 1)$ contain no automorphisms, giving $|\Aut(\mu)| = 1 = |\Aut(\nu)|$.
        
    \end{itemize}

    \begin{table}[ht]
    \begin{tabular}{|c| c c | c |} 
    \hline 
    Graph $\Gamma$ & $\frac{|\Aut(\mu)||\Aut(\nu)|}{|\Aut (\Gamma )|}$ & $ \prod_{\hat{e} } w(\hat{e})$ & Total \\ [0.5ex] 
    \hline\hline
    \textrm{A} & $1$ & $ 4 \cdot 5$ & $20$ \\ 
 
    \hline
    \textrm{B} & $1/2$ & $4\cdot 2 \cdot 2$ & $8$ \\ 
 
    \hline
    \textrm{C} & $1$ & $4\cdot 3$ & $12$ \\ 

    \hline
    \textrm{D} & $1$ & $4\cdot 3$ & $12$ \\ 
 
    \hline
    \textrm{E} & $1$ & $2\cdot 3 \cdot 5$ & $30$ \\ 
 
    \hline
    \textrm{F} & $1$ & $3\cdot 2 \cdot 2$ & $12$ \\ 
 
    \hline
    \textrm{G} & $1$ & $2\cdot 3$ & $6$\\ [1ex] 
    \hline
    \end{tabular}  
    \caption{\label{table2DHN}The weight for each pruned monodromy graph of type $(2, (1, 1, 1), (3))$.}
    \end{table}
        
    
    By \cref{deftropdhn} and \cref{trop corr thm for DHN}, $H_1((5), (4,1))$ is found by summing over the total column in \cref{table2DHN} to yield
    $$H_1^{\textrm{trop}} ((5), (4,1)) =  100 = H_1((5), (4, 1)).$$    
    
    \end{example}

\section{Tropical pruned Hurwitz numbers}
\label{sec:troppru}
In this section we construct a new interpretation of pruned double Hurwitz numbers $PH_g(\mu, \nu)$ using tropical covers via the cut-and-join recursion for $PH_g(\mu, \nu)$. Based on this new interpretation, we study the polynomial structure on pruned double Hurwitz numbers from a tropical perspective in \cref{sec:polytrop}.

\subsection{Pruned monodromy graphs}
To state the aforementioned recursion, we first fix $g\geq 0$ and $d>0$, consider $\mu, \nu$ partitions of $d$, and denote by $b = 2g-2 + \ell(\mu) + \ell(\nu)$.

\begin{theorem}\label{thm pruned recursion}(\cite[Theorem 24]{zbMATH06791415})
    For $b >0$ and $(g, \ell(\nu))\notin \{ (0, 1), (0, 2)\}$, the pruned double Hurwitz number $PH_g(\mu, \nu)$ satisfies the following recursion
    \begin{multline}
       PH_g(\mu, \nu)  =  \sum_{i<j} \sum_{I \subset \{1, \ldots, \ell(\mu)\} } \ \ \sum_{\mathclap{\substack{\alpha + |\mu_{I^c}| \\ 
	                          = \nu_i + \nu_j } }} \  \alpha \cdot \frac{(b-1)!}{(b-(|I^c|+1))!} \cdot (|I^c| +1)! \cdot \prod_{s\in \mu_{I^c}} s  \cdot  PH_g(\mu_I, (\nu \backslash\{\nu_i, \nu_j\}, \alpha) )  \\
   + \frac{1}{2}  \sum_{i=1}^{\ell(\nu)} \sum_{I \subset \{1, \ldots, \ell(\mu)\} } \ \ \ \ \sum_{\mathclap{\substack{\alpha + \beta + |\mu_{I^c}| \\ 
	                           = \nu_i } }} \ \ \alpha  
\cdot \beta \cdot \frac{(b-1)!}{(b-(|I^c|+1))!} \cdot (|I^c| +1) \cdot \prod_{s\in \mu_{I^c}} s \cdot    PH_{g-1}(\mu_I, (\nu \backslash \{\nu_i \} , \alpha, \beta ) )   \\
 + \frac{1}{2} \sum_{i=1}^{\ell(\nu)} \sum_{g_1 + g_2 = g}^{\text{stable}}  \quad \sum_{\mathclap{\substack{\nu_{J_1} \coprod \nu_{J_2} = \\
                        \nu \backslash \{\nu_i\}  } }} \qquad
 \sum_{\mathclap{\substack{I_1, I_2 \subset\\
                            \{ 1, \ldots, \ell(\mu ) \}\\
                            \text{disjoint}  } }} \qquad \
\sum_{\mathclap{\substack{\alpha + \beta +\\
                        |(\mu_{I_1} + \mu_{I_2})^c|\\
                        = \nu_i  } }} \quad 
\alpha \cdot \beta \cdot \frac{(b-1)!}{b_1! \cdot b_2!} \cdot (|(I_1 +I_2)^c| +1)! \cdot \\
\prod_{s\in \mu_{(I_1 \cup I_2)^c}} s \cdot  PH_{g_2} (\mu_{I_2}, (\nu_{J_2}, \beta))  .
    \end{multline}
    where $b_1 = 2g_1 -2 + |I_1| + |J_1| +1 , \ b_2 = 2g_2 -2 + |I_2| + |J_2| +1$, and the ``stable'' terms in the sum mean that we exclude terms with $(g_i, |J_i|) \in \{ (0, 1), (0, 2)\} $.
    \end{theorem}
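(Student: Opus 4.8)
The plan is to work entirely within the symmetric-group model of \cref{def PDHN} and to derive the recursion as a cut-and-join analysis performed from the $\nu$-side of a factorisation $(\sigma_1,\tau_1,\dots,\tau_b,\sigma_2)$. Writing $\rho_k=\sigma_1\tau_1\cdots\tau_k$, so that $\rho_0=\sigma_1$ has cycle type $\mu$ and $\rho_b=\sigma_2^{-1}$ has cycle type $\nu$, each multiplication by a transposition either joins two cycles or cuts one, while transitivity and the pruning condition $\sum_j|\supp(\mu_i)\cap\supp(\tau_j)|\ge 2$ are imposed on the whole chain. The idea is to isolate the cycle(s) of $\sigma_2$ modified by the transposition $\tau_b$ in the last position, peel off the block of transpositions that attaches the $\mu$-cycles indexed by $I^c$, and record the residual factorisation as a (collection of) smaller pruned factorisation(s).

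First I would establish the trichotomy governing $\tau_b$ together with this attaching block. Removing $\tau_b$ replaces $\sigma_2$ by $\tau_b\sigma_2$, which (i) merges two cycles of lengths $\nu_i,\nu_j$ into one, keeping the cover connected and the genus fixed, yielding the first sum; (ii) splits one cycle of length $\nu_i$ into two parts $\alpha,\beta$ while keeping the total structure connected, dropping the genus by one, which gives the second sum with its characteristic factor $\tfrac12$ for the unordered choice of the two resulting parts; or (iii) splits a cycle and disconnects the cover, producing two independent pruned factorisations over genera $g_1+g_2=g$ with the $\nu$- and $\mu$-parts partitioned accordingly, giving the third sum, again with $\tfrac12$ for the symmetry of the two pieces. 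In each case the newly created part carries a weight factor equal to its length, exactly as in the edge-weight products of the tropical covers of \cref{deftropdhn}.

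Next I would handle the pruning bookkeeping, which is the genuinely delicate point. Because pruning is a condition on $\mu$, the cycles indexed by $I^c$ cannot simply be inherited by the smaller number $PH_{g}(\mu_I,\dots)$ but must be absorbed into the modified $\nu$-cycle. I would show that attaching a $\mu$-cycle of length $s$ contributes a factor $s$, namely its number of admissible insertion points, which produces the product $\prod_{s\in\mu_{I^c}}s$; that the $|I^c|+1$ transpositions of the attaching block occupy the first $b-1$ slots in $(b-1)!/(b-(|I^c|+1))!$ ordered ways; and that their internal arrangement accounts for the factor $(|I^c|+1)!$ (respectively $(|I^c|+1)$ in the non-separating case). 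The key lemma is a weighted bijection between pruned factorisations counted by $PH_g(\mu,\nu)$ and triples consisting of a smaller pruned factorisation, a choice of attaching block, and the insertion data, which also verifies that the residual factorisation again satisfies transitivity and pruning.

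Finally, I would check that the exclusion $(g,\ell(\nu))\notin\{(0,1),(0,2)\}$ ensures the recursion terminates at the base cases of \cref{base case pruned ex}, and that the ``stable'' restriction on $(g_i,|J_i|)$ in the third sum discards exactly the degenerate pieces that are not honest pruned Hurwitz numbers. The main obstacle I expect is precisely the pruning bookkeeping of the third step: one must confirm that every $\mu$-cycle forced out of $\mu_I$ is genuinely absorbed with the correct multiplicity, and that no pruned configuration is counted twice across the three sums, so that the weighted bijection is exact rather than valid only up to a symmetry factor.
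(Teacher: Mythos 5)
First, a point of context: the paper does not prove this statement --- it is imported verbatim as Theorem~24 of \cite{zbMATH06791415}, so there is no in-paper argument to compare yours against. Judged on its own terms, your strategy --- a cut-and-join analysis of the factorisations of \cref{def PDHN} performed at $\tau_b$, with the trichotomy merge/connected split/disconnecting split and an ``attaching block'' that absorbs the $\mu$-cycles indexed by $I^c$ so that pruning survives the reduction --- is the natural route and is consistent with the shape of the formula, including the factors of $\tfrac12$ and the stability restriction in the third sum.

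That said, the proposal has genuine gaps precisely where the theorem is hard. The central ``weighted bijection'' between pruned factorisations and triples (smaller pruned factorisation, attaching block, insertion data) is asserted rather than constructed, and every combinatorial prefactor rides on it. Your accounting of the slot-placement factor is also off: since $\tau_b$ is pinned to the last slot, only the $|I^c|$ remaining transpositions of the attaching block are distributed among the first $b-1$ positions, which is what $\tfrac{(b-1)!}{(b-1-|I^c|)!}=\tfrac{(b-1)!}{(b-(|I^c|+1))!}$ counts --- your sentence places $|I^c|+1$ of them there. You flag but do not explain why the first sum carries $(|I^c|+1)!$ while the connected-join sum carries only $(|I^c|+1)$; this asymmetry is exactly the kind of detail the bijection must \emph{produce}, not receive. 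Finally, ``the residual factorisation again satisfies transitivity and pruning'' is subtler than stated: a transposition in the attaching block may meet the support of a cycle indexed by $I$ as well as one indexed by $I^c$, so its removal can drop $\sum_j|\supp(\mu_i)\cap\supp(\tau_j)|$ below $2$ for a retained $i\in I$; you must either rule this out or build the correction into the bijection, and you must likewise verify that no configuration is counted in more than one of the three sums. Until these points are supplied the argument is a plausible outline rather than a proof.
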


    Using this recursion we introduce a new ``pruned'' monodromy graph structure. These graphs may additionally feature coloured ends and $n$-valent vertices, which we formalise as follows. 

    \begin{definition}
        The coloured ends of a graph $\Gamma$ is a proper subset $CE(\Gamma) \subsetneq E_\infty(\Gamma)$ of the set of ends of the graph. 
    \end{definition}
    
    \begin{definition}
        A regular edge $e$ of a graph $\Gamma$ is an edge (or end) that is not a coloured end. In particular, $e\in E(\Gamma) \backslash CE(\Gamma)$.
    \end{definition}
    
    When drawn, we represent these edges as expected; regular edges are drawn as a plain black edge and coloured ends are drawn as red dashed lines in our figures.

    \begin{definition}
    A regular $n$-valent vertex is a vertex that is joined by $n$ regular strands.
    \end{definition}

    We specify certain types of vertices as follows.
    
    \begin{definition}
    Initial vertices are regular $n$-valent vertices joined by $n-2$ ends on the left, which split into two regular edges on the right.
    \end{definition}

    \begin{definition}
    A secondary vertex is a regular $3$-valent vertex that is either: joined by two regular inner edges on the left that join into one regular edge on the right, or it is joined by one regular inner edge on the left that splits into two regular edges on the right.
    \end{definition} 

    We construct our pruned monodromy graphs such that each strand of our graph must first be joined to an initial vertex. These initial vertices create new automorphism shapes in our graph, that we call $k$-pronged forks.

    \begin{definition}
        An $k$\textit{-pronged fork} appears in a graph when $k$ strands of weight $\mu_1, \ldots, \mu_k$ lying over $0$ join at an initial vertex.
    \end{definition}

    We take the automorphism factor of a $k$--pronged fork to be $|\Aut(\mu_1, \ldots, \mu_k)|$, the automorphism of the partition $(\mu_1, \ldots, \mu_k)$.

    When constructing our new curves and covers, we wish to define a new equivalence relation between the tropical covers. In this instance, we do not care about the ordering of vertices of disjoint connected components before the components join together. We make this precise as follows.

    \begin{definition}\label{def quasiisomorphic}
        We call two tropical covers $\pi_1 : \Gamma_1 \rightarrow \tPC, \pi_2 : \Gamma_2 \rightarrow \tPC$ of the tropical projective line \textit{quasi-isomorphic} if: 
        \begin{itemize}
            \item the underlying graphs $\Gamma_1, \Gamma_2$ are isomorphic 
            with isomorphism $\phi : \Gamma_1 \rightarrow \Gamma_2$, where coloured edges map to coloured edges, 

            \item the covers carry the same edge weights $w(e) = w(\phi(e))$ for an edge $e\in \Gamma_1$,

            \item the covers carry the same vertex multiplicities $m(v) = m(\phi(v))$ for a vertex $v\in \Gamma_1$,

            \item and furthermore the edge $\phi(e)$ carries the same direction as $e$. That is, if $e$ is joined to vertices above the point $i$ and the point $j$ where $i<j$, then $\phi(e)$ is mapped to vertices $\phi(i)$ and $\phi(j)$ where $\phi(i) < \phi(j)$.
        \end{itemize}
    \end{definition}

    Intuitively, quasi-isomorphic covers are covers of graphs with the same structure and the same weights, though if we consider any partial graph from the left, vertices that originate in disjoint connected components may project to $\tPC$ in a different order up until the components connect (see \cref{fig:quasi}). 

    \begin{figure}
        \centering
        \includegraphics[width=\linewidth]{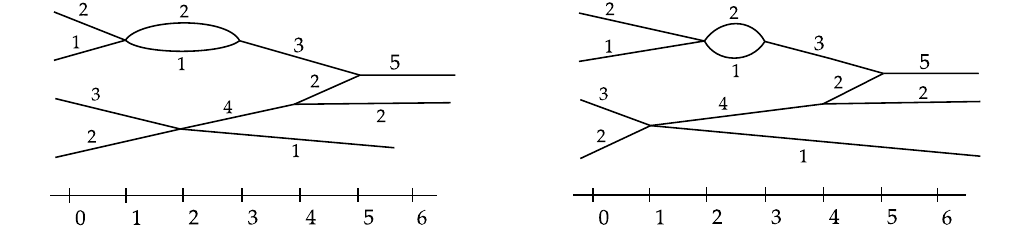}
        \caption{Isomorphic graphs with quasi-isomorphic covers projecting to the segment $[0, 6]\subset \tPC$.}
        \label{fig:quasi}
    \end{figure}

    Now, we wish to formulate a tropical interpretation of pruned double Hurwitz numbers using the recursion as stated in \cref{thm pruned recursion}. 

    \begin{construction}\label{construction prunes}
       Let $g\geq 0$ and $d> 0$ be integers, and let $\mu = (\mu_1, \ldots , \mu_m)$ and $\nu = (\nu_1, \ldots, \nu_n) $ be partitions of $d$. Let us consider $b = 2g-2 + m + n > 0 $ and $(g, \ell(\nu)) \notin \{ (0, 1), (0,2) \}$. We associate to this data a tropical graph $\Gamma$ and a map to the segment $[0, \tilde{b} + 1]$ as follows:
        \begin{enumerate}
            \item We start with $m$ strands over $0$ that are labelled by $\mu_1, \ldots, \mu_m$, where $\mu_k$ is the weight of the respective strands. We choose a proper subset $I\subsetneq [m]$ of these strands to be coloured ends, keeping the remaining strands $\mu_j$,  $j \in [m ] \backslash I $, as regular strands. 

            \item The recursion tells us the specific ways in which we may cut regular edges and join regular or coloured edges at any given vertex. Over the first vertex, we deal with the base case where $(g, \ell(\nu)) $ is locally equal to $(0, 2)$. We consider the other base cases in \cref{rem base case prunedtrop}.

            \item Over the point $1$ we construct a regular $n$-valent vertex, where $n\geq3$. To do this, we join together $n-2$ regular edges that originate over $0$ 
            to the vertex lying above $1$, which we then split into two outgoing regular edges on the right. We do not join any coloured edges to the vertex above $1$. This vertex is an initial vertex.

            \item This process of creating initial vertices is repeated at each successive integer until every regular strand originating above $0$ has been joined into an $n$-valent vertex, where $n\geq 3$.
            In doing this, we only consider attaching regular strands from the left that are original regular edges $\mu_j,\ j\in [m]\backslash I$ originating above $0$. In particular, we do not join any inner edges or coloured edges of our graph to these initial vertices. 

            \item Upon attaching every regular strand $\mu_j, \ j\in [m]\backslash I$, to an initial vertex, we may begin to construct secondary vertices. If we denote by $s$ the number of secondary vertices, we create $s = b - \ell(\mu)$ secondary vertices. Let us denote by $i$ the number of initial vertices of $\Gamma$ that we have constructed. Thus, we find that $\Tilde{b} = i+s$, so we are projecting to the segment $[0, i+s+1]$.

            \item Over subsequent vertices, the recursion tells us the manner in which we can construct these secondary vertices. That is, for the vertex above $i+1$ we may attach edges in two ways. Namely, 
                    we may create a regular $3$-valent vertex by either:
                    \begin{itemize}
                        \item joining two regular strands at the vertex, with one new regular edge coming out of the vertex on the right, 

                        \item or cutting one regular strand, which we split into two new regular edges exiting the vertex to the right. 
                    \end{itemize} 
                    When we have appropriately constructed our $3$-valent secondary vertex, we may join a non-negative number of coloured ends $\mu_k$, $k\in I$, to this same vertex. 
            
    
            \item Consider one representative for each quasi-isomorphism class of labelled graphs. 
    
            \item We repeat steps $(6)$ and $(7)$ for each consecutive integer up to $\Tilde{b}$. 

            \item When we reach vertex $\tilde{b}$, we consider connected graphs that terminate with $\ell(\nu) = n$ edges above $\Tilde{b} +1$. 
    
     
        \end{enumerate}
        We thus obtain a graph $\Gamma$ with a map projecting to the interval $[0, \Tilde{b}+1]$. We call this graph (along with the projection map) a pruned monodromy graph of type $(g, \mu, \nu)_{\text{P}}$.
    \end{construction}

    \begin{remark}\label{rem base case prunedtrop}
        The base cases that are not covered by the construction are those with $$(g, \ell(\nu)) \in \{ (0, 1), (0, 2) \}.$$  
         We construct monodromy graphs of type $(g, \mu, \nu)_{\text{P}}$ in order to express tropical pruned double Hurwitz number $PH\tr_g(\mu, \nu)$ as a weighted count of tropical graphs. However, we know by \cref{base case pruned ex}, that $PH_0((\mu_1, \ldots, \mu_m) , (\nu_1)) = 0$ for $m\in \{1,2,  \ldots\}$.  Thus, we represent these cases by the tropical graphs as illustrated in \cref{fig:basecaseprunes} (a) and (b). 
         
         When computing the weighted count of these graphs we prescribe these values to be exactly 
         the classical pruned double Hurwitz count. That is, we say that in graph (b) corresponding to $PH\tr_0( (\mu_1, \ldots, \mu_m) , (\nu_1) )$, the single vertex has multiplicity $m(v) = 0$. By convention, the pruned monodromy graph (a) on zero vertices has weight $0$. Moreover, we prescribe the multiplicity of the single vertex in graph (c) to be the pruned double Hurwitz number $PH_0((\mu_1, \ldots, \mu_m), (\nu_1, \nu_2))$. 

        Furthermore, any graphs that result from those of \cref{fig:basecaseprunes} (a) and (b) have a vertex multiplicity factors of $0$. Thus, in our construction we \textit{could} have allowed the vertex above $1$ to be joined by two (or more) regular strands on the left that combine into one regular strand on the right, but this is essentially a redundant case -- when we take the weighted sum over these graph, their weight is $0$ from the first vertex.
    \end{remark}

    \begin{figure}[ht]
        \centering
        \includegraphics[width=\linewidth]{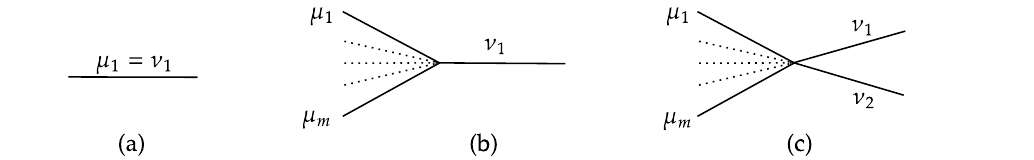}
        \caption{The tropical picture for $PH_0( (\mu_1), (\nu_1))$, $PH_0((\mu_1, \ldots, \mu_m), (\nu_1))$, and $PH_0((\mu_1, \ldots, \mu_m), (\nu_1, \nu_2))$.}
        \label{fig:basecaseprunes}
    \end{figure}


    \begin{definition}
    \label{def:prunedmonod}
        A pruned monodromy graph of type $(g, \mu, \nu)_{\text{P}}$, with $b=2g - 2 +\ell(\mu) +\ell(\nu)$, is a graph $\Gamma$ projecting to the segment $[0, \tilde{b} +1]$ with the following properties:
        \begin{enumerate}
            \item The graph $\Gamma$ has $c$ many coloured edges, where $0 \leq c<\ell(\mu)$.

            \item The graph $\Gamma$ has $s = b - \ell(\mu)$ many secondary vertices. Furthermore, if $\Gamma$ has $i$  initial vertices, then taking $\Tilde{b} = i+s$, $\Gamma$ projects to the segment $[0, s + i + 1]$.

            \item The genus of $\Gamma$ is $g$.

            \item We assign a weight to each edge $e$. This weight $w(e)$ is a positive integer.

            \item At each inner vertex 
            the sum of the weights of the incoming edges (both regular and coloured) is the sum of the outgoing weights of the edges. 

            \item The weights of the $n$ strands above $\Tilde{b} + 1 $ are $\nu_1, \ldots, \nu_n$.
            
        \end{enumerate}
        We denote the set of pruned monodromy graphs of type $(g,\mu,\nu)\p$ by $\mathfrak{TP}_g(\mu,\nu)$.
    \end{definition}

\subsection{Tropical pruned double Hurwitz numbers}

    As in the case of tropical double Hurwitz numbers, we wish to define pruned double Hurwitz numbers as the weighted sum over tropical graphs.

    \begin{definition}\label{pruned weights}
        Tropical pruned double Hurwitz numbers $PH\tr_g (\mu, \nu)$ are the weighted sum of pruned monodromy graphs $\Gamma$ of type $(g, \mu, \nu)\p$, 
        $$ PH\tr_g (\mu, \nu) = \sum_{\Gamma\in \mathfrak{TP}_g(\mu,\nu)} \frac{|\Aut(\mu)||\Aut(\nu)|}{|\Aut(\Gamma)|}\prod_{e \text{ inner edge}} w(e) \prod_{\hat{e} \text{ col. edge}} w(\hat{e}) \prod_{v \text{ vertex}} m(v)$$
        where the automorphism group of $\Gamma$ is made up of the factors of $1/2$ for each right balanced fork and wiener and the factor of $1/|\Aut(\mu_1, \ldots, \mu_k)|$ for any $k$--pronged fork of weights $\mu_1, \ldots, \mu_k$.
        Furthermore, the product over the edge weights ranges over the inner edges in the first product and over the coloured edges in the second product. Moreover, the multiplicity of each vertex is prescribed as follows:
        \begin{itemize}
            \item If $(g, \ell(\nu)) = (0, 1)$, then the graph looks like \cref{fig:basecaseprunes} (a) or (b). In case (a), we define the product of the vertex multiplicities over zero vertices to be $0$.
            Likewise, if we consider the vertex $v$ in case (b), where we have a regular $n$-valent vertex with $n-1$ edges joining from the left into $1$ on the right, we prescribe this to have multiplicity $$m(v)=0.$$
            
            \item The vertex multiplicity for a initial vertex $v$, i.e. a regular $n$-valent vertex as depicted in \cref{fig:basecaseprunes} (c), is the local pruned double Hurwitz number; $$m(v) = PH_0( (\mu_1, \ldots, \mu_{n-2}), (\nu_1, \nu_2)).$$ 
            
            \item For a given secondary vertex $v$ let us denote by $|c_v|$ the number of coloured ends joining $v$, and denote by $|\Tilde{c}|$ the number of coloured ends joined to vertices coming before $v$ in the connected component that $v$ lies in. Furthermore, we denote by $s$ the number of secondary vertices that come before $v$ in the connected component of the graph that $v$ lies in, while we denote by $v_i$ the 
            initial vertices in the same connected component as $v$, that come before $v$. 
            
            \item[cut:] If the secondary vertex $v$ consists of a regular strand being cut into two regular strands (possibly with $|c_v|$ coloured edges joining into $v$), then the multiplicity $m(v)$ of $v$ is 
            $$m(v) = \frac{\Big( \sum_{v_i} \big(\val(v_i) -2\big) + s + |\Tilde{c}| +|c_v|\Big)!}{\Big(\sum_{v_i} \big(\val(v_i) -2\big) + s + |\Tilde{c}|\Big)!} \cdot \big( |c_v| +1\big)!$$
            In particular $m(v)=1$ if no coloured edges join $v$.

            \item[c. join:] If two regular strands from the same connected component of the graph before $v$ are joined together at the secondary vertex $v$, then the multiplicity of $v$ is
            $$ m(v) = \frac{\Big(\sum_{v_i} \big(\val(v_i) -2\big) + s + |\Tilde{c}| +|c_v|\Big)!}{\Big(\sum_{v_i} \big(\val(v_i) -2\big) + s + |\Tilde{c}|\Big)!} \cdot \big(|c_v| +1\big).$$
            As in the above case, $m(v)=1$ if no coloured edges join $v$.

            \item[d. join:] If two regular strands from two disjoint connected components join at the inner vertex $v$, there are two cases that we distinguish between; \textit{degenerate disconnected joins} and \textit{non-degenerate disconnected joins}. 
            \begin{itemize}
                \item Degenerate disconnected joins are the cases in our recursion that do not qualify as ``stable'', which arise when $(g, \ell(\nu)) \in \{(0, 1), (0 ,2)\}$.
                In particular, if one (or both) of the strands joining at $v$ comes from a connected component that is a graph of genus $g=0$ (i.e. a tree) up to this vertex $v$ \textit{and} furthermore has only one choice or two choices of strands to join at vertex $v$, then $v$ has degenerate multiplicity:
                $$ m(v) = 0.$$

                \item A non-degenerate disconnected join occurs in the case in which the strands joining at $v$ come from connected components $ \Gamma_1, \Gamma_2$ that up to now; are not trees, and/or have more than three strands above $v$ that we may choose to join at $v$. 
                In this case, we denote by $s_j$ and $|\tilde{c_j}|$ the number of secondary vertices and the number of coloured edges respectively in each connected component $\Gamma_j$ before the vertex $v$, $j\in \{1, 2\}$. Furthermore, we denote by $v_{i_j}$ the initial vertices in the connected component $\Gamma_j$, $j \in \{1, 2 \}$ coming before the vertex $v$. Then the multiplicity of $v$ is 
            $$m(v) = \frac{\Big(\sum_{v_{i_1}} \big(\val(v_{i_1}) -2\big) + s_1 +\sum_{v_{i_2}} \big(\val(v_{i_2}) -2\big) + s_2  + |\Tilde{c_1}| + |\Tilde{c_2}| +|c_v|\Big)!}{\Big(\sum_{v_{i_1}} \big(\val(v_{i_1}) -2\big) + s_1 + |\Tilde{c_1}|\Big)!\cdot  \Big(\sum_{v_{i_2}} \big(\val(v_{i_2}) -2\big) + s_2 + |\Tilde{c_2}|\Big)!} \cdot \big(|c_v| +1\big)!$$
            \end{itemize}

        \end{itemize}
    \end{definition}

    \begin{remark}
        In \cref{realizability}, the ``realisability'' condition of tropical curves was considered. We may note that the pruned tropical graphs that we construct are not, what we would call, the ``natural'' tropicalisation. The covers that we establish are a purely combinatorial way of giving a geometric meaning to a combinatorial count.
    \end{remark}

    We now state our tropical correspondence theorem for pruned double Hurwitz numbers as follows.

    \begin{theorem}\label{corr thm pdhn}(Correspondence theorem for tropical pruned double Hurwitz numbers)
        Let us fix integers $d>0$ and $g\geq 0$, and let $\mu, \nu$ be partitions of $d$. Then, the classical count of the pruned double Hurwitz numbers is equal to the tropical count. Namely, $$ PH_g(\mu, \nu) = PH\tr_g(\mu, \nu).$$
    \end{theorem}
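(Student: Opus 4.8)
The plan is to prove the correspondence by induction on $b = 2g-2+\ell(\mu)+\ell(\nu)$, showing that the tropical weighted count $PH\tr_g(\mu,\nu)$ satisfies exactly the cut-and-join recursion of \cref{thm pruned recursion} together with the correct base cases. Since the pruned monodromy graphs of \cref{construction prunes} were built precisely by unwinding this recursion, the substance of the argument is a bookkeeping verification: one must check that the edge-weight products, the vertex multiplicities prescribed in \cref{pruned weights}, and the automorphism factors reassemble, term by term, into the coefficients appearing in \cref{thm pruned recursion}. As the classical numbers $PH_g(\mu,\nu)$ satisfy that recursion by definition, agreement of the two sides then follows.

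First I would dispose of the base cases. The pairs $(g,\ell(\nu)) \in \{(0,1),(0,2)\}$ are not produced by the recursive step and are instead prescribed directly in \cref{rem base case prunedtrop} and \cref{pruned weights}: the graphs of \cref{fig:basecaseprunes}(a),(b) are assigned vertex multiplicity $0$, matching $PH_0((\mu_1,\dots,\mu_m),(\nu_1)) = 0$ from \cref{base case pruned ex}, while the single initial vertex of \cref{fig:basecaseprunes}(c) is assigned multiplicity $PH_0((\mu_1,\dots,\mu_{n-2}),(\nu_1,\nu_2))$. Thus $PH\tr_g(\mu,\nu) = PH_g(\mu,\nu)$ holds by construction in these cases, and these provide the (infinitely many) anchors of the induction.

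For the inductive step I would peel off the rightmost secondary vertex $v$, i.e. the vertex lying over $\tilde b$. Deleting $v$ and reattaching its outgoing edge(s) as new ends, weighted by the weights they carry, produces a pruned monodromy graph (or, in the disconnected-join case, an ordered pair of such graphs) of strictly smaller $b$, to which the inductive hypothesis applies. The three possibilities for $v$ --- a cut, a connected join, or a non-degenerate disconnected join --- are in bijection with the three sums in \cref{thm pruned recursion}. In each case the new edge weights $\alpha$ (and $\beta$) contribute via the $\prod_e w(e)$ factor, the weights of the coloured ends $\mu_k$, $k\in I$, attached at $v$ contribute the product $\prod_{s\in\mu_{I^c}} s$ via $\prod_{\hat e} w(\hat e)$, and the multiplicity $m(v)$ contributes the remaining factorial coefficient. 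The distinction $(|c_v|+1)!$ versus $(|c_v|+1)$ between the cut and connected-join multiplicities reproduces exactly the distinction between $(|I^c|+1)!$ and $(|I^c|+1)$ in the first two lines of the recursion, the factor $\tfrac12$ for genus-increasing joins is recovered from the balanced-wiener automorphism, and the $|\Aut(\mu)|\,|\Aut(\nu)|$ and $k$-pronged-fork factors reconcile the labelling conventions of the recursion (ordered partitions, labelled branch points) with the unordered tropical count.

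The hard part will be the precise matching of the factorial prefactors that arise because we enumerate \emph{quasi-isomorphism} classes rather than isomorphism classes (\cref{def quasiisomorphic}): vertices originating in distinct connected components may be interleaved in the projection to $\tPC$ in many orders before those components merge. The number of admissible interleavings is exactly what the ratios of the form $\frac{(\cdots + |c_v|)!}{(\cdots)!}$ in the cut and connected-join multiplicities, and the product of two such ratios in the disconnected-join multiplicity, are engineered to count; the latter must be shown to coincide with $\frac{(b-1)!}{b_1!\,b_2!}$ after summing over all ways of distributing the earlier $\val(v_i)-2$-counts and secondary vertices between the two incoming components with genera $g_1,g_2$ and $\nu$-parts $\nu_{J_1},\nu_{J_2}$. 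I expect this reconciliation --- confirming that summing the quasi-isomorphism-class multiplicities over all interleavings yields the multinomial coefficients of the recursion, with neither over- nor under-counting, and with the degenerate-join cases correctly contributing multiplicity $0$ --- to be the main obstacle, with the remainder reducing to a routine induction.
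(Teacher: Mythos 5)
Your proposal follows essentially the same route as the paper's proof: induction on $b=2g-2+\ell(\mu)+\ell(\nu)$ with the prescribed base cases, splitting both the recursion of \cref{thm pruned recursion} and the set of pruned monodromy graphs into cut / connected-join / disconnected-join pieces according to the final vertex, and matching edge weights, vertex multiplicities, and automorphism factors term by term. The paper disposes of the factorial/interleaving bookkeeping you flag as the main obstacle by appealing to the fact that the vertex multiplicities in \cref{pruned weights} were defined by construction to equal the combinatorial factors of the recursion, so your plan is consistent with (and if anything slightly more explicit than) the published argument.
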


    \begin{proof} We prove this correspondence by induction.
    
        To begin, we consider the base cases. By definition these are equal to their classical pruned double Hurwitz analogue. It remains to show that tropical pruned double Hurwitz numbers satisfy the same recursion as the classical count. 
        In particular, let us assume that $$ PH\tr_{g^\prime}(\mu^\prime, \nu^\prime) = PH_{g^\prime}(\mu^\prime, \nu^\prime)$$
        for all $g^\prime, \mu^\prime, \nu^\prime$ such that $2g^\prime -2 + \ell(\mu^\prime) + \ell(\nu^\prime) = k \geq 1$. Now, we claim that $$PH\tr_g(\mu, \nu) = PH_g(\mu, \nu)$$
        for $g, \mu, \nu$ such that $2g-2 + \ell(\mu) + \ell(\nu) = k+1 $ (excluding the base cases $(g, \ell(\nu)) = \{(0, 1), (0, 2)\}$).
        To show this equality, we first recall the recursion for pruned double Hurwitz numbers as in \cref{thm pruned recursion}. We split this into three distinct parts $PH_g(\mu, \nu)^\text{cut}, PH_g(\mu, \nu)^\text{cj}, PH_g(\mu, \nu)^\text{dj} $
        \begin{itemize}
            \item Let us consider the ``cut'' case in the recursion, and denote by $PH_g(\mu, \nu)^\text{cut}$ the first sum:
    \begin{multline}\label{pruned c recur}
       PH_g(\mu, \nu)^\text{cut}  =  \sum_{i<j} \sum_{I \subset \{1, \ldots, \ell(\mu)\} } \ \ \sum_{\mathclap{\substack{\alpha + |\mu_{I^c}| \\ 
	                          = \nu_i + \nu_j } }} \  \alpha \cdot \frac{(b-1)!}{(b-(|I^c|+1))!} \cdot (|I^c| +1)! \cdot \prod_{s\in \mu_{I^c}} s \cdot     PH_g(\mu_I, (\nu \backslash\{\nu_i, \nu_j\}, \alpha) )     .
    \end{multline}
            
    \item Let us consider the ``connected join'' case in the recursion, denoting by $PH_g(\mu, \nu)^\text{cj}$ the second sum:
        \begin{multline}\label{pruned cj recur}
       PH_g(\mu, \nu)^\text{cj}  =   \frac{1}{2}  \sum_{i=1}^{\ell(\nu)} \sum_{I \subset \{1, \ldots, \ell(\mu)\} } \ \ \ \ \sum_{\mathclap{\substack{\alpha + \beta + |\mu_{I^c}| \\ 
	                           = \nu_i } }} \ \ \alpha  
\cdot \beta \cdot \frac{(b-1)!}{(b-(|I^c|+1))!} \cdot (|I^c| +1) \cdot \prod_{s\in \mu_{I^c}} s \cdot PH_{g-1}(\mu_I, (\nu \backslash \{\nu_i \} , \alpha, \beta ) ).
    \end{multline}
    
    \item Let us consider the ``disconnected join'' case in the recursion, and denote the third sum by $PH_g(\mu, \nu)^\text{dj}$:
        \begin{multline}\label{pruned dj recur}
       PH_g(\mu, \nu)^\text{cj}  = \frac{1}{2} \sum_{i=1}^{\ell(\nu)} \sum_{g_1 + g_2 = g}^{\text{stable}}  \quad \sum_{\mathclap{\substack{\nu_{J_1} \coprod \nu_{J_2} = \\
                        \nu \backslash \{\nu_i\}  } }} \qquad
 \sum_{\mathclap{\substack{I_1, I_2 \subset\\
                            \{ 1, \ldots, \ell(\mu ) \}\\
                            \text{disjoint}  } }} \qquad \
\sum_{\mathclap{\substack{\alpha + \beta +\\
                        |(\mu_{I_1} + \mu_{I_2})^c|\\
                        = \nu_i  } }} \quad 
\alpha \cdot \beta \cdot \frac{(b-1)!}{b_1! \cdot b_2!} \cdot (|(I_1 +I_2)^c| +1)! \cdot \\
\prod_{s\in \mu_{(I_1 \cup I_2)^c}} s \cdot PH_{g_1}(\mu_{I_1}, (\nu_{J_1}, \alpha)) \cdot  PH_{g_2} (\mu_{I_2}, (\nu_{J_2}, \beta))  .
    \end{multline}

        \end{itemize}
        Putting this together, we have $PH_g(\mu, \nu) = PH_g(\mu, \nu)^{\text{cut}} +  PH_g(\mu, \nu)^{\text{cj}} + PH_g(\mu, \nu)^{\text{dj}}$.

    Now, we can recall \cref{pruned weights}, where we took the weighted sum over pruned monodromy graphs $\Gamma$ of type $(g, \mu, \nu)\p$ in order to calculate $PH\tr_g(\mu, \nu)$ for $2g-2+\ell(\mu)+\ell(\nu) = k+1$. We may split the set of monodromy graphs $\Gamma \in \mathcal{TB}_g(\mu, \nu) $ into three distinct sets of graphs based on the type of vertex constructed over the final vertex. That is,
    \begin{itemize}
        \item We denote by $\Gamma^\text{cut}$ those pruned monodromy graphs where one regular strand is cut into two over the final vertex, possibly with coloured edges joining this vertex.

        \item Similarly, we denote by $\Gamma^{\text{cj}}$ the subset of these pruned monodromy graphs such that there is a connected join over the last vertex. That is, two regular strands from the same connected component join into one strand at the final vertex, possibly with coloured edges joining from the left.

        \item Finally, we denote by $\Gamma^{\text{dj}}$ those pruned monodromy graphs that have a disconnected join over the final vertex, possibly with coloured edges joining from the left.

    \end{itemize}
    In particular, we have that $\{\  \Gamma \ | \ \Gamma \text{ pruned monodromy graphs of type } (g, \mu, \nu)\p \} = \Gamma^\text{cut} + \Gamma^\text{cj} + \Gamma^\text{dj}$.

    Then, we may take $PH\tr_g(\mu, \nu)^i$ to be the weighted sum over pruned monodromy graphs $\Gamma^i$, where $i \in \{ \text{cut, cj, dj} \}$. Summing these three quantities gives us our total pruned count $PH\tr_g(\mu, \nu)$.
    
    To show that $PH\tr_g(\mu, \nu)^i = PH_g(\mu, \nu)^i$ for $i\in \{\textrm{cut, cj, dj}\}$ we consider the following analysis.
    \begin{itemize}
        \item $PH\tr_g(\mu, \nu)^\text{cut}$ satisfies \cref{pruned c recur}. Indeed, each pruned monodromy graph $\Gamma^\text{cut}$ is obtained from a pruned monodromy graph of type $(g, \mu_I, (\nu \backslash \{\nu_i, \nu_j\}, \alpha))$ by cutting a strand of weight $\alpha$ over the final vertex and joining any coloured edges if $\mu_I \neq \mu$. By the induction step $PH\tr_g(\mu_I, (\nu \backslash \{\nu_i, \nu_j\}, \alpha)) = PH_g(\mu_I, (\nu \backslash \{\nu_i, \nu_j\}, \alpha))$.
        However, $PH\tr_g(\mu, \nu)^\text{cut}$ is calculated by multiplying new edge weights, the new vertex multiplicity, by any new automorphisms, and by the weight of the old graph up to that point. 
        By construction, our vertex multiplicity is exactly the combinatorial factor that features in the recursion. The tropical edge weights correspond to the weights $\alpha$ and $ s$, for $s\in \mu_{I^c}$ in the recursion. Moreover, the automorphism factors of the graphs negate any overcounting of quasi-isomorphic covers, whereas the automorphism factors of the partitions give the multiplicity needed for labelled pruned double Hurwitz numbers. 
        Thus, $PH\tr_g(\mu, \nu)^\text{cut} = PH_g(\mu, \nu)^\text{cut}$.

        \item Similarly, each pruned monodromy graph $\Gamma^\text{cj}$ arises by taking a pruned monodromy graph of type $(g-1, \mu_I, (\nu\backslash \{\nu_i\}, \alpha, \beta))$ and joining regular strands of weight $\alpha, \beta $ over the final vertex, where $\alpha, \beta$ belong to the same connected component, and join any coloured strands of weight $s$, $s\in \mu_{I^c}$ if $I^c \neq \emptyset$.
        Analogously, $PH\tr_g(\mu, \nu)^\text{dj}$ satisfies \cref{pruned cj recur}. Again, this follows by construction, with the combinatorial factor in the recursion appearing as the vertex multiplicity in our graphs, and with $PH_{g-1}( \mu_I, (\nu\backslash \{\nu_i\}, \alpha, \beta)) = PH\tr_{g-1}(\mu_I, (\nu\backslash \{\nu_i\}, \alpha, \beta))$ by the induction hypothesis. Furthermore, the automorphism factors count as described above by cancelling any overcounting of quasi-isomorphic graphs and ensuring the right count for labelled covers.
        Thus, $ PH\tr_g(\mu, \nu)^\text{cj} = PH_g(\mu, \nu)^\text{cj}$.

        \item Finally, pruned monodromy graphs $\Gamma^\text{dj}$ are formed by taking pruned monodromy graphs of type $(g_1, \mu_{I_1}, (\nu_{J_1}, \alpha)), (g_2, \mu_{I_2}, (\nu_{J_2}, \beta))$ and joining regular strands of weight $\alpha, \beta $ over the final vertex, along with any coloured edges. $PH\tr_g(\mu, \nu)^\text{dj}$ satisfies \cref{pruned dj recur}. Indeed, the weights and the vertex multiplicity are defined this way by construction, where we do not take into consideration the ordering of the vertices in the different connected components, as we take quasi-isomorphic graphs. We find that $PH\tr_g(\mu, \nu)^\text{dj} = PH_g(\mu, \nu)^\text{dj}$.
    \end{itemize}

    Putting these three parts together, we find our desired equivalence
    $$PH\tr_g(\mu, \nu) = PH_g(\mu, \nu).$$

    \end{proof}

    To end this subsection, we give an example computing a pruned double Hurwitz number, using this correspondence theorem.

\begin{figure}[ht]
    \centering
    \includegraphics[width=\linewidth]{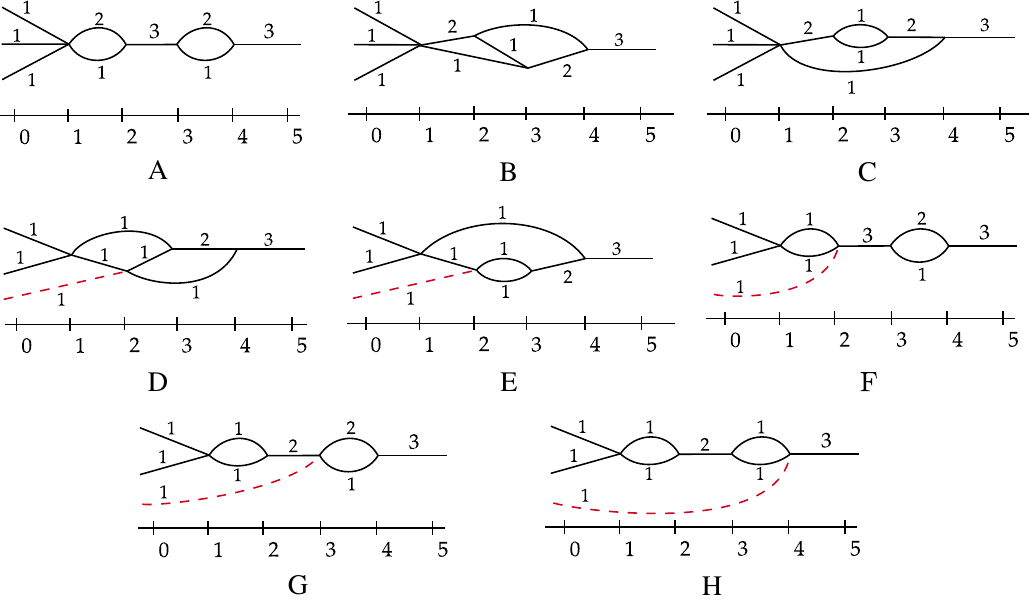}
    \caption{All pruned monodromy graphs of type $(2, (1, 1, 1), (3))\p$.}
    \label{fig:bigprunes}
\end{figure}

\begin{example}
    Let us consider $PH\tr_2((1, 1, 1), (3))$. 
    We can see all monodromy graphs of type $(2, (1, 1, 1), (3))\p$ are as shown in \cref{fig:bigprunes}.
%
    The initial vertex multiplicities are the local pruned double Hurwitz numbers
    $$ PH_0 ((1, 1, 1), (2, 1)) = 6, \qquad PH_0 ((1, 1), (1, 1)) = 2.$$
    Furthermore, the vertex multiplicity of any secondary vertex $v$, to which no coloured edges join is $m(v)= 1$. 
    Then, we find the weights of our graphs in \cref{TablePrunes}.

    \begin{table}[ht]
    \begin{tabular}{|c| c c c c | c |} 
    \hline 
    Graph $\Gamma$ & $\frac{|\Aut(\mu)||\Aut(\nu)|}{|\Aut (\Gamma )|}$ & $ \prod_{e \text{ inner edge}} w(e)$  & $ \prod_{\hat{e} \text{ col. edge}} w(\hat{e})$ & $\prod_{v \text{ vertex}} m(v)$ & Total \\ [0.5ex] 
    \hline\hline
    \textrm{A} & $6 \cdot (1/6)$ & $2\cdot 3\cdot 2$ & $1$ & $6 $ & $72$  \\ 
 
    \hline
    \textrm{B} & $6 \cdot (1/6)$ & $2\cdot 2$ & $1$ & $6$ & $24$ \\ 
 
    \hline
    \textrm{C} & $6 \cdot (1/6)\cdot (1/2)$ & $2\cdot 2$ & $1$ & $6$ & $12$ \\ 
 
    \hline
    \textrm{D} & $6 \cdot (1/2)$ & $2$ & $1$ & $2\cdot (3!/2!) \cdot 2!$ & $72$ \\ 
 
    \hline
    \textrm{E} & $6 \cdot (1/2)\cdot(1/2)$ & $2$ & $1$ & $2\cdot (3!/2!) \cdot 2!$ & $36$ \\ 

    \hline
    \textrm{F} & $6 \cdot (1/2)\cdot (1/2)$ & $3\cdot 2$ & $1$ & $2\cdot (3!/2!)\cdot 2$ & $108$ \\ 

    \hline
    \textrm{G} & $6 \cdot (1/2)\cdot (1/2)$ & $2\cdot 2$ & $1$ & $2\cdot (4!/3!) \cdot 2!$ & $96$ \\ 
 
    \hline
    \textrm{H} & $6 \cdot (1/2) \cdot (1/2)\cdot (1/2)$ & $2$ & $1$ & $2\cdot (5!/4!) \cdot 2$  & $30$ \\ [1ex] 
    \hline
    
    \end{tabular} 
    \caption{\label{TablePrunes} The weight for each pruned monodromy graph of type $(2, (1, 1, 1), (3))\p$.}
    \end{table}

    Summing the total column gives us $$ PH\tr_2((1, 1, 1), (3)) = 450  = PH_2((1, 1, 1), (3))$$.
\end{example}

\subsection{Polynomiality in genus $0$}
\label{sec:polytrop}
In this subsection, we study the polynomiality of pruned double Hurwitz numbers in genus $0$ from a tropical perspective. Based on this we take first steps towards wall--crossing formulae for pruned double Hurwitz numbers in genus $0$.

We begin with the notion of the combinatorial type of a pruned monodromy graph.

\begin{definition}
    Let $\pi\colon\Gamma\to[0,\tilde{b}+1]$ a pruned monodromy graph as in \cref{def:prunedmonod}. The map $\pi$ induces a partial ordering $\mathcal{O}$ on the edge set of $\Gamma$. We call the tuple $(\Gamma,\mathcal{O})$, where we forgot the weights of $\pi$ but retain the information in \cref{def:prunedmonod}, the combinatorial type of $\pi$.
\end{definition}

When there is no potential for confusion, we will denote the combinatorial type of $\pi$ by $\Gamma$.\vspace{\baselineskip}

Recall the set $\mathfrak{TP}_g(\mu,\nu)$ of pruned monodromy graphs of type $(g,\mu,\nu)\p$. Let $m$ and $n$ be positive integers, then we denote by $\mathfrak{TP}_g(m,n)$ the set of all tuples $(\Gamma,\mathcal{O})$, such that there exist partitions $\mu,\nu$ of length $m$ and $n$, so that $(\Gamma,\mathcal{O})$ is the combinatorial type of pruned monodromy graph in $\mathfrak{TP}_g(\mu,\nu)$. Since by definition the number of vertices and edges of a pruned monodromy graph is bounded in terms of $g$, $m$ and $n$, the set $\mathfrak{TP}_g(m,n)$ is finite.\\
We now fix $g=0$ for the rest of our discussion. Since in this case, $\Gamma$ is a tree, fixing the weights $\mu_1,\dots,\mu_m$ and $\nu_1,\dots,\nu_n$ of the strands over $0$ and $\infty$ also determines the weights of all inner edges by the balancing condition \cref{def:prunedmonod} (5). By the arguments as in \cite[Lemma 6.4]{cavalieri2010tropical}, the weight of an edge is a linear polynomial in the $\mu_i$ and $\nu_j$. More precisely, we have for an edge $e$ of $\Gamma$ that
\begin{equation}
    \omega(e)=\sum_{i\in I}\mu_i-\sum_{j\in J}\nu_j
\end{equation}
where $I\subset[m]$ and $J\subset[n]$ are the in- and out-weights of the component of $\Gamma\backslash \{e\}$ from which $e$ points away.\\
Now, we observe that the vertex multiplicities of secondary vertices in \cref{pruned weights} only depends on the combinatorial type of a pruned monodromy graph. Moreover, the multiplicity of the primary vertices are exactly the pruned double Hurwitz numbers considered as the third case of \cref{base case pruned ex}, which were observed to be piecewise polynomial with respect to the resonance arrangement.\\
We note, however, that here we need to introduce a refinement of the resonance arrangement in order to deduce piecewise polynomiality from the tropical perspective.

Recall the hyperplane
\begin{equation}
    \mathcal{H}_{m,n}=\left\{(\mu,\nu)\in\mathbb{N}^m\times\mathbb{N}^n\mid\sum\mu_i=\sum\nu_j\right\}.
\end{equation}
and the \textit{resonance arrangement} in $\mathcal{H}_{m,n}$ given by
\begin{equation}
    \mathcal{R}_{m,n}=\left\{\sum_{i\in I}\mu_i-\sum_{j\in J}\nu_j=0\mid I\subset[m],J\subset[n]\right\}.
\end{equation}

\begin{definition}
    Let $m$, $n$ positive integers, then we define the refined resonance arrangement in $\mathcal{H}_{m,n}$ given by
\begin{equation}
    \tilde{\mathcal{R}}_{m,n}=\left\{\sum_{i\in I}\alpha(i)\mu_i-\sum_{j\in J}\beta(j)\nu_j=0\mid I\subset[m],J\subset[n],\alpha(i),\beta(i)\in\{-1,1\}\right\}.
\end{equation}
\end{definition}

Note, that the vertex multiplicity of any primary vertex of a tropical monodromy graph in $\mathfrak{TP}_g(\mu,\nu)$ is given by
\begin{equation}
    PH_0\left((\mu_{i_1},\dots,\mu_{i_s}),\left(\sum_{I_1}\mu_i-\sum_{J_1}\nu_j,\sum_{I_2}\mu_i-\sum_{J_2}\nu_j\right)\right)
\end{equation}
for some subsets $I_i\subset[m],J_j\subset[n]$. Substituting the partition $(\sum_{I_1}\mu_i-\sum_{J_1}\nu_j,\sum_{I_2}\mu_i-\sum_{J_2}\nu_j)$ into the equations of the resonance arrangement, we obtain equations of the form that define $\tilde{\mathcal{R}}_{m,n}$. Thus, we have proved the following.

\begin{theorem}
    Let $(\Gamma,\mathcal{O})\in \mathfrak{TP}_0(m,n)$ and denote by $m(\Gamma,\mathcal{O})(\mu,\nu)$ the contribution of pruned monodromy graphs in $\mathfrak{TP}_0(\mu,\nu)$ to $PH_0(\mu,\nu)$. Then, the map
    \begin{align}
        m(\Gamma,\mathcal{O})\colon\mathcal{H}_{m,n}&\to\mathbb{Q}\\
        (\mu,\nu)&\mapsto m(\Gamma,\mathcal{O})(\mu,\nu) 
    \end{align}
    is piecewise polynomial with respect to the refined resonance arrangement $\tilde{\mathcal{R}}_{m,n}$.
\end{theorem}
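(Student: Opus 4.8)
The plan is to expand the definition of $m(\Gamma,\mathcal{O})(\mu,\nu)$ using \cref{pruned weights} and to check that, chamber by chamber of $\tilde{\mathcal{R}}_{m,n}$, each factor occurring in the weight is either locally constant or polynomial in $(\mu,\nu)$, so that their product is polynomial on the chamber. First I would reduce to a single graph: since $g=0$ the type $(\Gamma,\mathcal{O})$ is a tree, so once $\mu_1,\dots,\mu_m,\nu_1,\dots,\nu_n$ are fixed the balancing condition of \cref{def:prunedmonod} forces the weight $\omega(e)=\sum_{i\in I}\mu_i-\sum_{j\in J}\nu_j$ of every inner and coloured edge. Hence $\mathfrak{TP}_0(\mu,\nu)$ contains a unique graph of type $(\Gamma,\mathcal{O})$ precisely when all these forced weights are positive, and none otherwise, so $m(\Gamma,\mathcal{O})(\mu,\nu)$ is the weight of that single graph (or $0$). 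Each $\omega(e)$ is a defining form of $\mathcal{R}_{m,n}\subseteq\tilde{\mathcal{R}}_{m,n}$, so its sign is constant on every chamber; thus the existence locus is a union of chambers and the product $\prod_e\omega(e)$ over inner and coloured edges is a genuine polynomial there.

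Next I would dispatch the elementary factors. The secondary vertex multiplicities of \cref{pruned weights} are ratios of factorials of the quantities $\sum_{v_i}(\val(v_i)-2)$, $s$, $|\tilde{c}|$ and $|c_v|$, all of which are read off from $(\Gamma,\mathcal{O})$ alone, so they are constants. The automorphism prefactor $|\Aut(\mu)||\Aut(\nu)|/|\Aut(\Gamma)|$ depends on $(\mu,\nu)$ only through the multiplicities of equal parts and the fork automorphisms $|\Aut(\mu_1,\dots,\mu_k)|$; since the hyperplanes $\mu_i-\mu_{i'}=0$ and $\nu_j-\nu_{j'}=0$ belong to $\tilde{\mathcal{R}}_{m,n}$ (take $\alpha(i)=1,\alpha(i')=-1$, resp.\ $\beta(j)=1,\beta(j')=-1$), the equality pattern of the parts is constant on each chamber, so this prefactor is locally constant.

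The main work, and the principal obstacle, is the initial (primary) vertex multiplicities. By \cref{base case pruned ex} each equals a base-case pruned double Hurwitz number $PH_0((\mu_{i_1},\dots,\mu_{i_s}),(\alpha,\beta))$, where $\alpha=\omega(e_1),\beta=\omega(e_2)$ are the weights of the two outgoing edges of that vertex and $(\mu_{i_1},\dots,\mu_{i_s})$ are the incoming $\mu$-strands; these base cases are piecewise polynomial with respect to the resonance arrangement of the sub-problem. The key point is that the pullback of every sub-problem resonance wall $\sum_{k\in K}\mu_{i_k}-\epsilon_\alpha\alpha-\epsilon_\beta\beta=0$, $\epsilon_\alpha,\epsilon_\beta\in\{0,1\}$, under the substitution $\alpha\mapsto\omega(e_1),\beta\mapsto\omega(e_2)$ is a hyperplane of $\tilde{\mathcal{R}}_{m,n}$. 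I would verify this by the observation that the two outgoing edges lead to disjoint downstream subtrees $T_1,T_2$, so that writing $\omega(e_r)=\sum_{j\in J_r}\nu_j-\sum_{i\in I_r}\mu_i$ in terms of the indices inside $T_r$, the index sets $K$, $I_1$, $I_2$, $J_1$, $J_2$ are pairwise disjoint; consequently every $\mu_i$ and every $\nu_j$ acquires a coefficient in $\{-1,0,1\}$. For the term $\epsilon_\alpha=\epsilon_\beta=1$ one instead uses the balancing identity $\alpha+\beta=\sum_{k}\mu_{i_k}$ at the initial vertex, reducing the wall to $-\sum_{k\notin K}\mu_{i_k}$, again with coefficients in $\{-1,0\}$. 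In all cases the restricted equation is a defining equation of $\tilde{\mathcal{R}}_{m,n}$, so each initial vertex multiplicity is polynomial on every chamber of $\tilde{\mathcal{R}}_{m,n}$.

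Assembling the four families of factors, on each chamber of $\tilde{\mathcal{R}}_{m,n}$ the function $m(\Gamma,\mathcal{O})$ is a product of a constant, a locally constant prefactor, the polynomial $\prod_e\omega(e)$, and finitely many polynomials coming from the initial vertices, hence a polynomial; and the common refinement of all the involved wall sets is $\tilde{\mathcal{R}}_{m,n}$ itself. Therefore $m(\Gamma,\mathcal{O})$ is piecewise polynomial with respect to $\tilde{\mathcal{R}}_{m,n}$, as claimed. The only genuinely delicate verification is the coefficient bookkeeping of the previous paragraph, which is exactly where the passage from $\mathcal{R}_{m,n}$ to the \emph{refined} arrangement $\tilde{\mathcal{R}}_{m,n}$ becomes unavoidable.
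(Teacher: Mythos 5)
Your proposal is correct and follows essentially the same route as the paper: reduce to a tree whose edge weights are forced linear forms by balancing, observe that secondary vertex multiplicities are combinatorial constants, and show that the resonance walls of the base-case pruned Hurwitz numbers at the initial vertices pull back to defining hyperplanes of $\tilde{\mathcal{R}}_{m,n}$, which is exactly where the refinement enters. You additionally spell out two points the paper leaves implicit (the positivity/existence locus being a union of chambers, and the local constancy of the automorphism prefactor via the hyperplanes $\mu_i-\mu_{i'}=0$ in $\tilde{\mathcal{R}}_{m,n}$), but the underlying argument is the same.
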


We remark that in all examples we computed, the map $m(\Gamma,\mathcal{O})$ was actually piecewise polynomial with respect to $\mathcal{R}_{m,n}$. We illustrate this phenomenon in the following example.

\begin{example}
    Let $g=0$, $\mu=(\mu_1,\mu_2)$ and $\nu=(\nu_1,\nu_2,\nu_3)$. We consider the combinatorial type of a pruned monodromy graph in \cref{fig:prungraphpoly}. We consider the chamber of the resonance arrangement given by $\mu_1,\mu_2>\nu_1,\nu_2,\nu_3$. Note that these inequalities automatically imply $\nu_i+\nu_j>\mu_1,\mu_2$ for all $i\neq j$. Its multiplicity is
    \begin{equation}
        2\mathrm{min}(\mu_1,\mu_2,\mu_1+\mu_2-\nu_1,\nu_1)(\mu_1+\mu_2-\nu_1).
    \end{equation}
    Since $\mu_1,\mu_2>\nu_1$, we need to compute $\mathrm{min}(\mu_1+\mu_2-\nu_1,\nu_1)$. We observe that $\mu_1+\mu_2-\nu_1>\nu_1$ is equivalent to $\mu_1+\mu_2>2\nu_1$ which is true in the chosen chamber. Therefore, the contribution of the graph in \cref{fig:prungraphpoly} to $PH_0((\mu_1,\mu_2),(\nu_1,\nu_2,\nu_3))$ is
    \begin{equation}
        2\nu_1(\mu_1+\mu_2-\nu_1).
    \end{equation}
    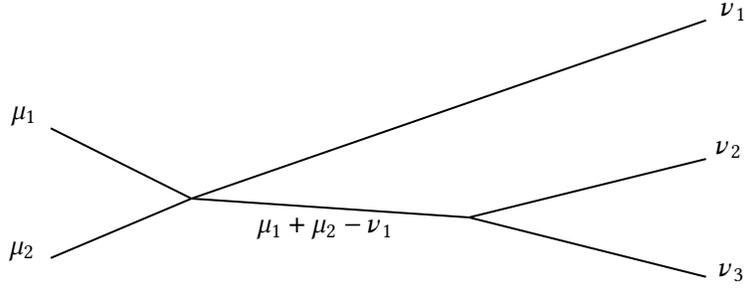
\begin{figure}
        \centering
\tikzset{every picture/.style={line width=0.75pt}} 

\begin{tikzpicture}[x=0.75pt,y=0.75pt,yscale=-1,xscale=1]

\draw    (100,105) -- (171,140.5) ;
\draw    (171,140.5) -- (100,170.5) ;
\draw    (171,140.5) -- (430.5,50.5) ;
\draw    (171,140.5) -- (311,150) ;
\draw    (311,150) -- (430.5,120.5) ;
\draw    (311,150) -- (430.5,180) ;

\draw (77.5,160.5) node [anchor=north west][inner sep=0.75pt]   [align=left] {$\displaystyle \mu _{2}$};
\draw (78.5,92.5) node [anchor=north west][inner sep=0.75pt]   [align=left] {$\displaystyle \mu _{1}$};
\draw (436,172) node [anchor=north west][inner sep=0.75pt]   [align=left] {$\displaystyle \nu _{3}$};
\draw (434.5,110) node [anchor=north west][inner sep=0.75pt]   [align=left] {$\displaystyle \nu _{2}$};
\draw (437,40.5) node [anchor=north west][inner sep=0.75pt]   [align=left] {$\displaystyle \nu _{1}$};
\draw (202.5,149) node [anchor=north west][inner sep=0.75pt]   [align=left] {$\displaystyle \mu _{1} +\mu _{2} -\nu _{1}$};

\end{tikzpicture}
        \caption{A pruned monodromy graph of type $(0,(\mu_1,\mu_2),(\nu_1,\nu_2,\nu_3))$.}
        \label{fig:prungraphpoly}
    \end{figure}
\end{example}

\printbibliography
\end{document}